\documentclass[12pt]{article}
\usepackage{amsmath,amsfonts,amssymb,amsthm,mathrsfs}
\usepackage[a4paper,vmargin={3.5cm,3.5cm},hmargin={2.5cm,2.5cm}]{geometry}
\usepackage[margin=1cm]{caption}
\usepackage{graphicx,graphics}
\usepackage{epsfig}
\usepackage{latexsym}
\usepackage[utf8]{inputenc}
\usepackage{ae,aecompl}
\usepackage[english]{babel}
\usepackage[colorlinks=true]{hyperref}
\usepackage{enumerate}
\usepackage{bbm}
\usepackage{pxfonts}
\usepackage{dsfont}
\usepackage{tikz}
\usepackage{tabularx}
\usepackage{multirow}

\usepackage[auto]{contour}
\usetikzlibrary{shapes, angles, decorations.text, patterns, fadings,decorations.pathreplacing,fit}
\pgfdeclarelayer{edgelayer}
\pgfdeclarelayer{nodelayer}
\pgfsetlayers{edgelayer,nodelayer,main}

\date{}

\headheight=-0.2cm
\oddsidemargin=-1.3cm
\evensidemargin=0cm
\textwidth=18.5cm
\textheight=23cm
\marginparsep=0cm
\marginparwidth=0cm
\topskip=0cm
\pagestyle{plain}
\topmargin=0.5cm

\newtheorem{theorem}{Theorem}[]
\newtheorem{proposition}{Proposition}[section]
\newtheorem{lemma}[proposition]{Lemma}
\newtheorem{cor}[proposition]{Corollary}
\theoremstyle{definition}
\newtheorem{definition}{Definition}[section]
\newtheorem{remark}{Remark}[section]
\newtheorem{construction}{Construction}[]


\newcommand{\M}{\mathsf{M}}
\newcommand{\T}{\mathsf{T}} 
\newcommand{\CT}{\mathsf{T}} 
\newcommand{\Mob}{\mathsf{Mob}}
\newcommand{\LMob}{\mathsf{LMob}}
\newcommand{\STr}{\mathsf{STr}}
\newcommand{\BT}{\mathsf{BT}}

\renewcommand{\epsilon}{\varepsilon}

\newcommand{\grow}{\operatorname{grow}}
\newcommand{\coll}{\operatorname{coll}}

\DeclareSymbolFont{extraup}{U}{zavm}{m}{n}
\DeclareMathSymbol{\varheart}{\mathalpha}{extraup}{86}
\DeclareMathSymbol{\vardiamond}{\mathalpha}{extraup}{87}

\makeatletter
\renewcommand*{\@fnsymbol}[1]{\ensuremath{\ifcase#1\or  \vardiamond \or \clubsuit\or \spadesuit\or
   \mathsection\or \mathparagraph\or \|\or **\or \dagger\dagger
   \or \ddagger\ddagger \else\@ctrerr\fi}}
\makeatother

\title{\bf \textsc{Growing uniform planar maps face by face}}
\author{Alessandra Caraceni\thanks{INdAM research unit at Scuola Normale Superiore, Pisa, Italy.\hfill  \texttt{alessandra.caraceni@sns.it}} \hspace{12pt} \& \hspace{4pt} Alexandre Stauffer\thanks{Department of Mathematics and Physics, Univ.~Roma Tre, Rome, Italy\newline \textcolor{white}{-----} Department of Mathematical Sciences, University of Bath, UK.\hfill  \texttt{astauffer@mat.uniroma3.it}\newline \textcolor{white}{-----} Supported by EPSRC Fellowship EP/N004566/1.}}
\begin{document}
\maketitle

\abstract{We provide ``growth schemes'' for inductively generating uniform random $2p$-angulations of the sphere with $n$ faces, as well as uniform random simple triangulations of the sphere with $2n$ faces. In the case of $2p$-angulations, we provide a way to insert a new face at a random location in a uniform $2p$-angulation with $n$ faces in such a way that the new map is precisely a uniform $2p$-angulation with $n+1$ faces. Similarly, given a uniform simple triangulation of the sphere with $2n$ faces, we describe a way to insert two new adjacent triangles so as to obtain a uniform simple triangulation of the sphere with $2n+2$ faces. The latter is based on a new bijective presentation of simple triangulations that relies on a construction by Poulalhon and Schaeffer.}

\section{Introduction}

The study of uniform random planar maps of given size conditioned to satisfy certain constraints is by now a sprawling subject rich with connections both internal and to other parts of mathematics and physics. Its combinatorial roots are firmly embedded in the works of Tutte, who in the 60s lay the groundwork for an enumerative theory of planar maps \cite{Tut62, Tut63}. Later works, whose original inspiration can in many cases be traced back to the celebrated bijection by Cori, Vanquelin and Schaeffer \cite{CV81, Sch98}, have uncovered crucial links between certain classes of planar maps and classes of trees or tree-like structures. Armed with these bijections, probabilists have been able to embark on a deep investigation of the metric structure of large random planar maps and achieve very general results concerning scaling limits~\cite{LGM12}, local limits~\cite{AS03} as well as many other aspects.

The topic of \emph{random generation} of planar maps is one that naturally inserts itself near the core of this subject, and one that we wish to further develop with this paper. More specifically, we are interested in generating a uniform map of size $n+1$ within a certain class by somehow \emph{growing} it out of a uniform map of size $n$ via a small local modification. Part of our motivation for considering this problem comes from investigating the mixing time of certain edge flip Markov chains on various classes of planar maps. A strategy for designing canonical paths using a uniform ``growth'' scheme has led us in previous papers \cite{CS19,Car20} to consider this problem for the (rather special) case of quadrangulations; in a future paper, we shall leverage the results obtained here to achieve polynomial upper bounds for the mixing time of the edge flip Markov chain on $2p$-angulations and simple triangulations of the sphere.

Naturally, the problem of random generation within classes of planar maps has been considered before: many of the papers establishing links to branching structures do so with an eye towards efficient random generation (see for example~\cite{PS06}). However, few results have been obtained so far concerning iterative, increasing procedures, where a uniform map of size $n+1$ is coupled with a uniform map of size $n$ in such a way that the larger map is always obtained from the smaller one by a small local (and random) modification that increases the number of faces. Other than the aforementioned paper \cite{Car20}, a result in this direction is that of Bettinelli \cite{Bet14}, who uses an explicit bijection to obtain a uniform quadrangulation with $n+1$ faces by performing a certain (random) surgery operation on a uniform quadrangulation with $n$ faces, cutting it along a path, inserting a new face and re-glueing the edges of the cut in a slightly different way. Note that, though this operation may indeed count as a small random modification, it does affect a potentially large region of the quadrangulation, which makes it ill-suited to some applications (such as the one to edge flip chains mentioned above). Moreover, efforts of this kind have born fruits -- to our knowledge -- mainly in regards to quadrangulations, whose direct link to plane trees provides access to a more complete set of tools.

If one looks at random generation of uniform plane trees of fixed size, a rich panorama unfolds. From R\'emy's algorithm for the generation of binary trees~\cite{Rem85}, to bijective approaches involving letter sequences and the cycle lemma~\cite{ARS97}, to the more probabilistic approach involving critical Galton--Watson trees, this topic continues to be investigated in its many facets, with very recent papers such as \cite{Mar21} providing further insights. Within this larger scope, one can consider the problem of generating uniform random trees in an iterative, increasing way, by grafting leaves onto smaller uniform trees; this is the approach of Lukzac and Winkler in the paper \cite{LW04}, which is one of the main sources of inspiration for this work. Note that the question of whether certain random trees can be coupled in an increasing way has been asked in various contexts and sometimes answered in the negative, see for example \cite{HS19}.

Our main objective within this paper is to bring increasing random generation in the style of~\cite{LW04} to more complicated classes of maps, and in particular to rooted $2p$-angulations of the sphere and rooted simple triangulations of the sphere. To this end, we better formalise the notion of a growth scheme (see Definitions~\ref{def:growth scheme for 2p-angulations} and~\ref{def:growth scheme for triangulations}), and we consolidate the existing setup of bijections between $2p$-angulations and labelled mobiles (see~\cite{BDFG04} and Section~\ref{2p-angulations, mobiles, trees}), as well as simple triangulations and blossoming trees (see~\cite{PS06} and Section~\ref{triangulations, blossoming trees}), in order to obtain a direct correspondence between the notion of growing uniform $2p$-angulations (respectively, simple triangulations), and that of growing labelled mobiles (respectively, blossoming trees). Finally, we explore the links between mobiles, blossoming trees and $d$-ary trees, relying on the main result of~\cite{LW04} in order to show

\begin{theorem}\label{2p-angulations growth}
For all $p\geq 2$, there exists a growth scheme for rooted $2p$-angulations of the sphere.	
\end{theorem}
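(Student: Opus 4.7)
The plan is to reduce the theorem to a growth problem for trees via the Bouttier--Di Francesco--Guitter bijection \cite{BDFG04} recalled in Section~\ref{2p-angulations, mobiles, trees}, and then to invoke the main result of \cite{LW04}. Since uniform rooted $2p$-angulations with $n$ faces correspond under the BDFG bijection to uniform labelled mobiles with $n$ black vertices, and the insertion of a single face on the map side corresponds to the insertion of a single black vertex (together with its $p$ incident edges, its new white neighbours, and the associated labels) on the mobile side, it suffices to exhibit a growth scheme for labelled mobiles that, once pulled back through BDFG, acts locally on the map.

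The first step is to decouple the underlying (unlabelled) tree structure of the mobile from its labels. Conditional on the unlabelled mobile, the distribution of admissible labellings is simple and Markovian along the tree: the label increments between consecutive white neighbours of a fixed black vertex are independent bounded integer-valued variables with an explicit distribution, subject only to a global additive constant fixed by the root convention. This means that a growth scheme for unlabelled mobiles can be promoted to a growth scheme for labelled mobiles by resampling only the labels at the newly inserted white vertices, using the existing label at the insertion corner as a boundary condition.

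The second step is to encode unlabelled mobiles whose black vertices all have degree $p$ by a class of rooted plane trees to which the main theorem of \cite{LW04} applies. For $p=2$ the mobile is essentially a plane tree (each black vertex of degree two subdivides an edge of an ordinary plane tree); for general $p$, combining a colour-forgetting step with the classical plane-tree/$d$-ary-tree correspondence yields an encoding of unlabelled mobiles with $n$ black vertices as uniform elements of a Fuss--Catalan family of $d$-ary plane trees, for some $d$ depending on $p$. The main theorem of Luczak and Winkler then supplies, uniformly in $n$, a coupling of the uniform tree of size $n$ with the uniform tree of size $n+1$ via a single random local insertion of a node.

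The main obstacle, and the technical heart of the argument, will be to verify that the composite operation -- grow the underlying tree as in \cite{LW04}, translate the inserted node back into a new black vertex of the mobile, and resample the labels of the $p$ newly created white vertices conditional on the existing label at the insertion corner -- produces, after pulling back through the BDFG bijection, an operation satisfying Definition~\ref{def:growth scheme for 2p-angulations}. The uniformity on $2p$-angulations with $n+1$ faces is automatic from the uniform-to-uniform character of each intermediate transformation; what genuinely requires care is the locality requirement, which should follow from the fact that the BDFG decoding of each face uses only a bounded neighbourhood of the corresponding black vertex of the mobile, so that the inserted face and its re-routed incident edges all lie within a bounded region of the insertion corner.
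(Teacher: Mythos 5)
Your overall route is essentially the paper's: encode $2p$-angulations via the BDFG construction as labelled $p$-mobiles, strip the labels (which are i.i.d.\ uniform over a finite set, one per black vertex, so they contribute only an explicit multiplicative factor), identify unlabelled $p$-mobiles with complete $p$-ary trees, and invoke the Luczak--Winkler growth scheme for $d$-ary trees. One point you pass over silently is that $\Psi$ is a bijection onto \emph{pointed} rooted $2p$-angulations $\M^{2p,\bullet}_n$, not onto $\M^{2p}_n$; since the number of vertices of a $2p$-angulation with $n$ faces is determined by $n$, uniformity does survive the projection, but the growth functions must be averaged over the $|V(m)|$ choices of distinguished vertex (this is the $\frac{1}{|V(m)|}\sum_{v\in V(m)}$ in the paper's Proposition~\ref{2p-angulation growth prop}), and this averaging is precisely what makes conditions ii) and iii) of Definition~\ref{def:growth scheme for 2p-angulations} come out to $1$ and to $|\M^{2p}_{n+1}|/|\M^{2p}_{n}|$ respectively; ``automatic from the uniform-to-uniform character'' hides real bookkeeping here.

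The genuine gap is in your justification of the step you correctly single out as the technical heart: that growing the mobile at a corner corresponds, through $\Psi$, to collapsing/growing a face in the sense of Definition~\ref{def:growth scheme for 2p-angulations}. You argue this should follow because ``the BDFG decoding of each face uses only a bounded neighbourhood of the corresponding black vertex of the mobile''. That is false: the map edge $e(c)$ drawn from a white corner $c$ targets the first corner with label one less in the clockwise contour of the \emph{whole} mobile, which can be arbitrarily far from $c$ in the tree; consequently edges incident to the new face can originate from corners far from the insertion point, and collapsing the new face re-routes them globally, not within a bounded region of the insertion corner. The correct argument (Lemma~\ref{lemma:growing mobile to growing map}) is of a different nature: one shows that the face $f_v$ associated to the new black vertex $v$ is bounded by a simple cycle $C=P_l\cup P_r$ through $v_1$ which encloses no other black vertex --- because the other $p-1$ neighbours of $v$ are leaves of the mobile --- and that collapsing $f_v$ by gluing $P_l$ onto $P_r$ recovers $\Psi(\tau,\epsilon)$ exactly, since every map edge that targeted a corner of $P_l$ in the larger map targets the corresponding, identically labelled, corner of $P_r$ in the smaller one. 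So the conclusion you want is true, but not for the reason you give, and this verification cannot be reduced to a locality check.
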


as well as

\begin{theorem}\label{triangulation growth}
There exists a growth scheme for rooted simple triangulations of the sphere.	
\end{theorem}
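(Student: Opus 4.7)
The plan is to follow the same three-step strategy that yields Theorem~\ref{2p-angulations growth}, but with the Poulalhon--Schaeffer bijection in place of Bouttier--Di Francesco--Guitter. The starting point is the correspondence from the section on blossoming trees, inherited from~\cite{PS06}, which identifies rooted simple triangulations of the sphere with $2n$ faces with a combinatorial class of blossoming trees of appropriate size. Because this bijection is measure-preserving, a growth scheme for uniform simple triangulations is equivalent to a growth scheme for uniform blossoming trees, in which the insertion of two adjacent triangles on the map side corresponds to a small local modification on the tree side.

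The second step is to reformulate blossoming trees so as to apply the Luczak--Winkler theorem~\cite{LW04} on uniform growth of $d$-ary trees. Blossoming trees are not $d$-ary trees in the immediate sense: they carry buds and leaves subject to a global conservation law (the excess of buds over leaves is fixed). What is needed is the ``new bijective presentation of simple triangulations'' announced in the abstract, which I would construct as a bijection between the relevant class of blossoming trees and a class of plain $d$-ary plane trees of some size linear in $n$, with the crucial feature of \emph{locality}: each node of the $d$-ary tree must correspond to a bounded-radius patch of the blossoming tree, so that a leaf-graft in the $d$-ary tree translates to a bounded local modification on the triangulation --- namely the insertion of two adjacent triangles. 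With such a presentation in hand, the final step is essentially formal: invoke Luczak--Winkler to obtain a growth scheme on uniform $d$-ary trees by random leaf-grafting, and transport this step backward through the two bijections. Uniformity at size $n+1$ is automatic because all intermediate bijections are deterministic and the Luczak--Winkler step preserves uniformity on trees.

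The main obstacle will be the second step: engineering a bijection between blossoming trees and $d$-ary trees that simultaneously accommodates the global bud/leaf conservation law inherited from Poulalhon--Schaeffer and preserves locality. In the mobile setting of Theorem~\ref{2p-angulations growth} the analogous step is smoother because mobiles carry only label differences, which are naturally absorbed into the tree picture without a global balance constraint. For simple triangulations, one must take extra care to ensure that the encoding of buds and leaves does not require reading information from arbitrarily far in the tree; otherwise the resulting growth operation could cease to be a genuinely local insertion of two adjacent triangles, violating the very definition of a growth scheme. A natural candidate for such an encoding is a cycle-lemma-type desymmetrisation of the blossoming tree, re-rooted so that the bud/leaf constraint becomes automatic along a depth-first traversal, which is precisely the sort of construction that should make the grafting step compatible with the map-level insertion.
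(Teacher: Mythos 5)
Your overall architecture matches the paper's: pass from triangulations to blossoming trees via Poulalhon--Schaeffer, re-encode blossoming trees in terms of complete $d$-ary trees, and invoke Luczak--Winkler. You also correctly identify the second step as the crux. However, there is a genuine gap in the final step, which you dismiss as ``essentially formal'' with uniformity ``automatic''. The re-encoding (Construction~\ref{BT to (l,r)}) does not produce a \emph{single} $d$-ary tree: a blossoming tree in $\BT_n$ corresponds to a \emph{pair} $(l,r)$ of complete $4$-ary trees with $|l|+|r|=n-1$, where the split of the total size between the two components is itself random. Consequently Theorem~\ref{d-ary trees growth} cannot be applied directly: to grow from total size $n-1$ to total size $n$ one must decide \emph{which} of the two trees to graft a leaf onto, and the probability $h(|l|,|r|)$ of choosing the left tree must be tuned so that the resulting joint distribution of $(|L|,|R|)$ at size $n$ is proportional to $|\CT^4_a||\CT^4_b|$ over pairs with $a+b=n$. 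The existence of such an $h$ is not automatic; it is the content of Lemma~\ref{lemma:h function}, which the paper proves by exhibiting a unit flow on a bipartite network $G_n$ via max-flow--min-cut, using the explicit enumeration \eqref{formula triangulations-trees} and the monotonicity of the ratio $|\CT^4_{i+1}|/|\CT^4_i|$ to show all minimal cuts have capacity $1$. Without this ingredient your scheme would produce a triangulation of size $n+1$ whose underlying pair of trees is uniform conditionally on its size split but whose size split has the wrong law.

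A secondary point: your stated worry about ``locality'' of the encoding is somewhat misdirected. The paper's encoding is in fact quite non-local metrically (it says so explicitly in a remark), and the bud/leaf conservation is handled structurally by the two-blossoms-per-non-blossom condition rather than by a cycle-lemma re-rooting. What actually requires work is verifying that a leaf-graft on one of the $4$-ary trees corresponds, through the closure procedure of Construction~\ref{PS construction}, to the insertion of exactly two adjacent faces in the triangulation (Lemma~\ref{lemma:growing trees to growing triangulations}); this needs a case analysis according to whether the new vertex is adjacent to zero, one, or both endpoints of the root edge, tracking partial closures of $\tau$ and $\tau'$ in parallel. Your proposal correctly anticipates that something must be checked here, but the difficulty lies in the closure dynamics, not in a global balance constraint.
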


Along the way to reaching the above results, we describe a construction proving the following proposition, highlighting new connections between combinatorial structures which might be of independent interest:
 
\begin{proposition}\label{4-ary trees and triangulations}
		There is an explicit $2n$-to-$1$ map (induced by Construction~\ref{BT to (l,r)}) from $\mathcal{T}_{n-1}\times\{\pm 1\}$ to $\STr_n$, where $\mathcal{T}_{n-1}$ is the set of rooted plane trees with $n-1$ vertices of degree $5$ and $3n-1$ leaves, and $\STr_n$ is the set of rooted simple triangulations of the sphere with $2n$ faces.
\end{proposition}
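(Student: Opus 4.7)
The plan is to unpack Construction~\ref{BT to (l,r)} and combine it with the Poulalhon--Schaeffer closure to obtain the map, then verify the fiber size by a cycle-lemma argument. First I would identify elements of $\mathcal{T}_{n-1}\times\{\pm 1\}$ with suitable blossoming trees: the underlying plane tree in $\mathcal{T}_{n-1}$ supplies the backbone --- internal nodes of degree $5$ together with their $4(n-1)$ leaves --- while the $\pm 1$ records the binary decoration (e.g.\ a choice of orientation, or of which side of the root plays a distinguished role) that upgrades a bare plane tree into a bona fide blossoming tree in the sense of~\cite{PS06}.

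Next I would run Construction~\ref{BT to (l,r)} on each such decorated tree to produce a coding pair $(l,r)$, and then use the Poulalhon--Schaeffer closure to turn $(l,r)$ into a simple triangulation in $\STr_n$ together with a natural rooting. A key verification at this stage is that the closure is well-defined: granted Dyck-like matching conditions between buds and leaves, every element of $\mathcal{T}_{n-1}\times\{\pm 1\}$ must close into a valid rooted simple triangulation with $2n$ faces. This should follow by adapting the arguments of~\cite{PS06}, with small modifications arising from the particular degree profile of the blossoming trees in play (degree-$5$ internal nodes, $4(n-1)$ leaves).

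The final, and most delicate, step is to prove that each triangulation in $\STr_n$ has exactly $2n$ preimages. I would argue this by exhibiting a free action of a group of order $2n$ on $\mathcal{T}_{n-1}\times\{\pm 1\}$ that preserves the output triangulation: a natural candidate is a cyclic action encoding the $2n$ admissible starting positions for the PS closure, in the spirit of the cycle lemma, with the sign $\pm 1$ interacting with the shift so as to generate a full orbit of size $2n$. The main obstacle, I expect, is to show both that the orbit does preserve the output (immediate from the construction) and that it \emph{exhausts} the fiber; the latter reduces to injectivity of the closure modulo the $2n$-action, which I would establish by exploiting the fact that rooted simple triangulations admit no non-trivial root-preserving automorphisms, so each orbit has full size $2n$, yielding the claimed $2n$-to-1 property by standard orbit counting.
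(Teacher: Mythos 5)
Your overall route---pass from $\mathcal{T}_{n-1}$ to blossoming trees and invoke the Poulalhon--Schaeffer closure, with the factor $2n$ coming from the $2n$ admissible root blossoms---is the right one and matches the paper's in spirit, but two of your steps do not work as stated. The first and most serious is your opening identification: you propose to read a blossoming tree directly off a tree in $\mathcal{T}_{n-1}$ together with a sign, with the $\pm 1$ serving as ``the binary decoration that upgrades a bare plane tree into a blossoming tree''. This cannot work: a blossoming tree in $\BT_n$ has $n$ non-blossoms and $2n$ blossoms, hence $3n$ vertices, which is not the vertex count of a tree in $\mathcal{T}_{n-1}$, so no sign choice bridges the gap. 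The actual correspondence is the chain $\mathcal{T}_{n-1}\leftrightarrow\cup_{a}\CT^4_a\times\CT^4_{n-1-a}\leftrightarrow\BT_n$: cutting the distinguished oriented edge of a tree in $\mathcal{T}_{n-1}$ yields an ordered pair of complete $4$-ary trees, and the recursive Construction~\ref{BT to (l,r)} (run in reverse) turns such a pair into a blossoming tree. This is a genuine bijection requiring no sign; the $\pm 1$ must be kept in reserve, since it is consumed by Construction~\ref{PS construction} to orient the root edge (this is also forced by counting: $|\mathcal{T}_{n-1}|=|\BT_n|$ and $|\BT_n\times\{\pm1\}|=2n|\STr_n|$). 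Your pipeline also runs Construction~\ref{BT to (l,r)} in the wrong direction: the closure is applied to the blossoming tree itself, not to the coding pair $(l,r)$.

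The second issue is that your fiber argument re-proves, by a cycle-lemma/group-action device, facts that are already packaged in the recalled properties of Construction~\ref{PS construction}: the map $\Xi:\BT_n\times\{\pm1\}\to\STr_n$ is $2n$-to-one precisely because the root blossom (one of $2n$) is forgotten by the closure, and no ``Dyck-like matching'' or degree-profile verification is needed beyond what is already stated. If you insist on a cyclic action directly on $\mathcal{T}_{n-1}\times\{\pm 1\}$, you would have to transport the blossom re-rooting through the bijection above, which is root-dependent and highly non-canonical, so this is considerably harder than you suggest; moreover, freeness of re-rooting comes from the fact that distinct root blossoms give distinct rooted plane trees (rooted plane trees are rigid), not from the absence of root-preserving automorphisms of the triangulation. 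Once the bijections of the first paragraph are in place, the proposition is a one-line corollary of the $2n$-to-one property of $\Xi$, which is how the paper concludes.
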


Though Theorems~\ref{2p-angulations growth} and \ref{triangulation growth} were obtained with a view of their applications to Markov chain mixing, we believe they might be a useful tool to add to the arsenal available in the investigation of planar maps. Moreover, they open up various avenues of further study, as one might want to clarify their relation to local limits and peeling processes, or attempt to shed more light on the explicit underlying probabilities that are only characterised recursively. Growth schemes of this type could also be useful in developing graphical tools for the visualisation of large uniform triangulations or $2p$-angulations of the sphere, as they give a way to iteratively construct such maps ``from the ground up'', with each step being a small modification that does not strongly disrupt the overall structure.

The paper is organised as follows. Sections~\ref{2p-angulations, mobiles, trees} and \ref{triangulations, blossoming trees} contain the relevant notation, with compact reminders of  known bijections and their most relevant features in our context. Section~\ref{Growing $2p$-angulations} explains how growing $d$-ary trees is equivalent to growing $d$-mobiles, which in turn lays the groundwork for a growth scheme for $2d$-angulations: the section culminates with the proof of Proposition~\ref{2p-angulation growth prop}, which yields Theorem~\ref{2p-angulations growth} as a consequence. Finally, Section~\ref{Growing simple triangulations} proves Proposition~\ref{4-ary trees and triangulations} and Theorem~\ref{triangulation growth}.

\section{$2p$-angulations, mobiles and labelled $p$-ary trees}\label{2p-angulations, mobiles, trees}

In this section, we briefly define some of the combinatorial objects of interest and remind the reader of some relationships between them. The bijections we will be discussing are all known or easy to deduce from known facts, but we shall attempt a compact presentation of those parts of the constructions which will actually be involved in subsequent proofs. We refer readers to \cite{BDFG04} for additional details.

\begin{definition}Given an integer $p\geq 2$, a $p$-angulation is a rooted planar map all of whose faces have a contour of length $p$. We denote the set of all $p$-angulations with $n$ faces by $\M^p_n$.	We shall sometimes also consider the set of \emph{pointed} $p$-angulations $\M^{p,\bullet}_n$, that is, the set of pairs $(m,v)$, where $m\in \M^p_n$ and $v\in V(m)$ is a vertex of $m$.
\end{definition}

Bijective approaches are especially successful in the case of $p$-angulations with $p$ even, and more generally of classes of bipartite planar maps, which lend themselves well to a certain type of construction involving mobiles, introduced by Bouttier, Di Francesco and Guitter in \cite{BDFG04}. Though bijections with other classes of decorated trees do exist in the case of odd $p$, they involve additional conditions on the $p$-angulations and do not in general yield enumerative results of the same explicit nature as those one obtains in the case of even $p$. We will discuss the special case of (simple) triangulations in the next section; this section will be mostly devoted to $2p$-angulations and to related branching structures.

In connection to $2p$-angulations, we will consider various types of plane trees.
\begin{definition}\label{def:complete d-ary tree}
Let $d\geq 2$ and $n\geq 1$ be integers. A \emph{(complete) $d$-ary tree} is a rooted plane tree each of whose vertices is of one of three types: the \emph{origin}, which carries the root corner and has degree $d$; the \emph{leaves}, which have degree $1$; the \emph{internal vertices}, which have degree $d+1$. We say that a complete $d$-ary tree $\tau$ has \emph{size} $|\tau|=n$, where $n\geq 1$, if it has $n-1$ internal vertices.

We denote by $\CT^d_n$ the set of all complete $d$-ary trees of size $n$ (see Figure~\ref{fig:3-ary trees}), and conventionally define $\CT^d_0$ as the set containing the tree with only one vertex, which we will sometimes call the \emph{degenerate} $d$-ary tree, in spite of it not satisfying the definition above.
\end{definition}

Remark that a tree in $\CT^d_n$ has a total of $(d-1)n+1$ leaves, and therefore $dn+1$ vertices and $dn$ edges. This is easy to see using the following construction, which we will repeatedly discuss throughout the paper: given a complete $d$-ary tree $\tau$ and a leaf $v$ of $\tau$, we denote by $\grow(\tau,v)$ the complete $d$-ary tree of size $|\tau|+1$ obtained by turning $v$ into an internal vertex, grafting $d$ leaves onto it (right part of Figure~\ref{fig:3-ary trees}); we say that $\grow(\tau,v)$ is obtained by \emph{growing $\tau$ at $v$}. The fact that every tree in $\CT^d_{n+1}$ can be obtained from some tree in $\CT^d_{n}$ by growing it at some leaf immediately yields the number of leaves and vertices given above, by induction.

\begin{remark}\label{rem:completeness of trees}
It is worth remarking that a complete $d$-ary tree of size $n>0$, as presented by Definition~\ref{def:complete d-ary tree}	, can easily be reinterpreted as a pointed tree with $n$ vertices, all labelled save for the origin: each vertex $v$ has at most $d$ children, each of which is assigned a different label in $\{1,2,\ldots,d\}$. Indeed, this interpretation of a (complete) $d$-ary tree comes from numbering sibling vertices in left-to-right order and then erasing all the leaves (Figure~\ref{fig:3-ary trees}). In this alternative interpretation, the operation of growing $\tau\in\CT^d_n$ at a leaf $v$ corresponds to making $v$ into an internal vertex, thus adding it and its label to our labelled tree.

It can be useful to have this alternative view in mind, particularly when referring to the paper~\cite{LW04}, which predominantly uses it. The interpretation of $\CT_0^d$ is a formal matter, but in the context of labelled trees we would interpret the degenerate complete $d$-ary tree as the empty tree with no vertices and no edges, which can only be grown into the tree with one (distinguished) vertex and no edges, corresponding to the single element of $\CT^d_1$.  
\end{remark}

\begin{figure}\centering
\begin{tikzpicture}[vertex/.style={circle, inner sep=1.6pt, draw=white, fill=black}, gem/.style={diamond, inner sep=1.7pt, fill=green!70!black}, branch/.style={very thick, brown},blossom/.style={green!70!black}, simple/.style={}, scale=.68]
	\begin{pgfonlayer}{nodelayer}
		\node [style=vertex] (0) at (-6, 4) {};
		\node [style=vertex] (1) at (-4, 4) {};
		\node [style=vertex] (2) at (-4.5, 4.5) {};
		\node [style=vertex] (3) at (-4, 4.5) {};
		\node [style=vertex] (4) at (-3.5, 4.5) {};
		\node [style=vertex] (5) at (-1, 4) {};
		\node [style=vertex] (6) at (-1.5, 4.5) {};
		\node [style=vertex] (7) at (-1, 4.5) {};
		\node [style=vertex] (8) at (-0.5, 4.5) {};
		\node [style=vertex] (9) at (-2, 5) {};
		\node [style=vertex] (10) at (-1.5, 5) {};
		\node [style=vertex] (11) at (-1, 5) {};
		\node [style=vertex] (12) at (0.5, 4) {};
		\node [style=vertex] (13) at (0, 4.5) {};
		\node [style=vertex] (14) at (0.5, 4.5) {};
		\node [style=vertex] (15) at (1, 4.5) {};
		\node [style=vertex] (16) at (0, 5) {};
		\node [style=vertex] (17) at (0.5, 5) {};
		\node [style=vertex] (18) at (1, 5) {};
		\node [style=vertex] (19) at (2, 4) {};
		\node [style=vertex] (20) at (1.5, 4.5) {};
		\node [style=vertex] (21) at (2, 4.5) {};
		\node [style=vertex] (22) at (2.5, 4.5) {};
		\node [style=vertex] (23) at (2, 5) {};
		\node [style=vertex] (24) at (2.5, 5) {};
		\node [style=vertex] (25) at (3, 5) {};
		\node [style=vertex] (26) at (-4, 1.5) {};
		\node [style=vertex] (27) at (-1, 1.5) {};
		\node [style=vertex] (28) at (0.5, 1.5) {};
		\node [style=vertex] (29) at (2, 1.5) {};
		\node [circle, fill=white, draw=black, inner sep=0.5pt] (30) at (-1, 2.2) {\footnotesize$1$};
		\node [circle, fill=white, draw=black, inner sep=0.5pt] (31) at (0.5, 2.2) {\footnotesize$2$};
		\node [circle, fill=white, draw=black, inner sep=0.5pt] (32) at (2, 2.2) {\footnotesize$3$};
		\node (33) at (-6, 3) {$\CT_0^3$};
		\node (34) at (-4, 3) {$\CT_1^3$};
		\node (35) at (-6, 1.5) {$\emptyset$};
		\node (36) at (0.5, 3) {$\CT_2^3$};
	\end{pgfonlayer}
	\begin{pgfonlayer}{edgelayer}
		\draw [style=simple] (2) to (1);
		\draw [style=simple] (3) to (1);
		\draw [style=simple] (4) to (1);
		\draw [style=simple] (9) to (6);
		\draw [style=simple] (10) to (6);
		\draw [style=simple] (11) to (6);
		\draw [style=simple] (6) to (5);
		\draw [style=simple] (7) to (5);
		\draw [style=simple] (8) to (5);
		\draw [style=simple] (13) to (12);
		\draw [style=simple] (14) to (12);
		\draw [style=simple] (15) to (12);
		\draw [style=simple] (16) to (14);
		\draw [style=simple] (17) to (14);
		\draw [style=simple] (18) to (14);
		\draw [style=simple] (20) to (19);
		\draw [style=simple] (21) to (19);
		\draw [style=simple] (22) to (19);
		\draw [style=simple] (23) to (22);
		\draw [style=simple] (24) to (22);
		\draw [style=simple] (25) to (22);
		\draw [style=simple] (30) to (27);
		\draw [style=simple] (31) to (28);
		\draw [style=simple] (32) to (29);
	\end{pgfonlayer}
	\clip[use as bounding box] (-8,-0.5) rectangle (4.5,6);
\end{tikzpicture}\qquad
\begin{tikzpicture}[vertex/.style={circle, inner sep=1.6pt, draw=white, fill=black}, gem/.style={diamond, inner sep=1.7pt, fill=green!70!black}, branch/.style={very thick, brown},blossom/.style={green!70!black}, simple/.style={thick}, scale=1.1]
	\begin{pgfonlayer}{nodelayer}
		\node (0) at (5.75,3.25) {$\tau$};
		\node [style=vertex] (1) at (6.75, 4.75) {};
		\node [circle, fill=white, draw=black, inner sep=0.5pt] (2) at (6.75, 2) {\footnotesize3};
		\node [style=vertex] (3) at (8.25, 4.25) {};
		\node [circle, fill=white, draw=black, inner sep=0.5pt] (4) at (9.75, 2) {\footnotesize3};
		\node [style=vertex] (5) at (10, 5.25) {};
		\node [circle, fill=white, draw=black, inner sep=0.5pt] (6) at (9.25, 1.5) {\footnotesize3};
		\node [style=vertex] (7) at (6, 5.25) {};
		\node [style=vertex] (8) at (6.25, 4.25) {};
		\node [style=vertex, red, label=above:$v$] (9) at (5.75, 5.25) {};
		\node [style=vertex] (10) at (5.75, 1) {};
		\node [style=vertex, red] (11) at (8.5, 5.75) {};
		\node [style=vertex] (12) at (10, 5.75) {};
		\node [style=vertex] (13) at (7, 5.75) {};
		\node [style=vertex] (14) at (7, 5.25) {};
		\node [style=vertex, red] (15) at (8.75, 5.25) {};
		\node [style=vertex] (16) at (9, 5.25) {};
		\node [style=vertex] (17) at (6.75, 5.75) {};
		\node [circle, fill=white, draw=black, inner sep=0.5pt] (18) at (7, 2.5) {\footnotesize3};
		\node [circle, fill=white, draw=black, inner sep=0.5pt] (19) at (5.75, 2) {\footnotesize1};
		\node [circle, fill=white, draw=black, inner sep=0.5pt] (20) at (6.25, 1.5) {\footnotesize3};
		\node [style=vertex] (21) at (10.25, 5.75) {};
		\node [style=vertex] (22) at (8.75, 1) {};
		\node (23) at (9, 3.25) {$\grow(\tau,v)$};
		\node [style=vertex] (24) at (6.25, 4.75) {};
		\node [style=vertex] (25) at (6.75, 5.25) {};
		\node [style=vertex] (26) at (9.75, 4.75) {};
		\node [circle, fill=white, draw=black, inner sep=0.5pt] (27) at (10, 2.5) {\footnotesize3};
		\node [style=vertex] (28) at (6.5, 5.25) {};
		\node [style=vertex] (29) at (8.75, 4.75) {};
		\node [style=vertex] (30) at (5.75, 4.75) {};
		\node [style=vertex] (31) at (8.5, 5.25) {};
		\node [circle, fill=red!10, draw=red, thick, inner sep=0.5pt] (32) at (8.75, 2.5) {\footnotesize2};
		\node [style=vertex] (33) at (5.75, 4.25) {};
		\node [style=vertex] (34) at (5.75, 3.75) {};
		\node [style=vertex, red] (35) at (8.75, 5.75) {};
		\node [style=vertex, red] (36) at (9, 5.75) {};
		\node [style=vertex] (37) at (9.25, 4.25) {};
		\node [style=vertex] (38) at (8.75, 4.25) {};
		\node [style=vertex] (39) at (9.75, 5.25) {};
		\node [style=vertex] (40) at (7.25, 5.75) {};
		\node [style=vertex] (41) at (9.25, 4.75) {};
		\node [style=vertex] (42) at (5.25, 4.25) {};
		\node [style=vertex] (43) at (9.5, 5.25) {};
		\node [style=vertex] (44) at (5.5, 5.25) {};
		\node [style=vertex] (45) at (9.75, 5.75) {};
		\node [style=vertex] (46) at (8.75, 3.75) {};
		\node [circle, fill=white, draw=black, inner sep=0.5pt] (47) at (8.75, 2) {\footnotesize1};
	\end{pgfonlayer}
	\begin{pgfonlayer}{edgelayer}
		\draw [style=simple] (27) to (4);
		\draw [style=simple] (29) to (37);
		\draw [style=simple] (43) to (26);
		\draw [style=simple] (45) to (5);
		\draw [style=simple] (4) to (6);
		\draw [style=simple] (31) to (29);
		\draw [style=simple] (44) to (30);
		\draw [style=simple, red] (11) to (15);
		\draw [style=simple] (13) to (14);
		\draw [style=simple] (41) to (37);
		\draw [style=simple] (8) to (34);
		\draw [style=simple, red] (32) to (47);
		\draw [style=simple] (30) to (8);
		\draw [style=simple] (5) to (26);
		\draw [style=simple, red] (35) to (15);
		\draw [style=simple] (38) to (46);
		\draw [style=simple] (28) to (1);
		\draw [style=simple] (1) to (8);
		\draw [style=simple] (25) to (1);
		\draw [style=simple] (16) to (29);
		\draw [style=simple] (24) to (8);
		\draw [style=simple] (6) to (22);
		\draw [style=simple] (3) to (46);
		\draw [style=simple] (15) to (29);
		\draw [style=simple] (17) to (14);
		\draw [style=simple] (21) to (5);
		\draw [style=simple] (18) to (2);
		\draw [style=simple] (12) to (5);
		\draw [style=simple] (20) to (10);
		\draw [style=simple] (40) to (14);
		\draw [style=simple] (37) to (46);
		\draw [style=simple] (7) to (30);
		\draw [style=simple] (2) to (20);
		\draw [style=simple] (42) to (34);
		\draw [style=simple] (26) to (37);
		\draw [style=simple] (19) to (20);
		\draw [style=simple] (9) to (30);
		\draw [style=simple] (47) to (6);
		\draw [style=simple] (14) to (1);
		\draw [style=simple] (33) to (34);
		\draw [style=simple] (39) to (26);
		\draw [style=simple, red] (36) to (15);
	\end{pgfonlayer}
\end{tikzpicture}
\caption{\label{fig:3-ary trees}On the left, the set of complete 3-ary trees of size 0, 1 and 2, reinterpreted below as pointed labelled trees. On the right, growing a 3-ary tree at a leaf $v$.}
\end{figure}
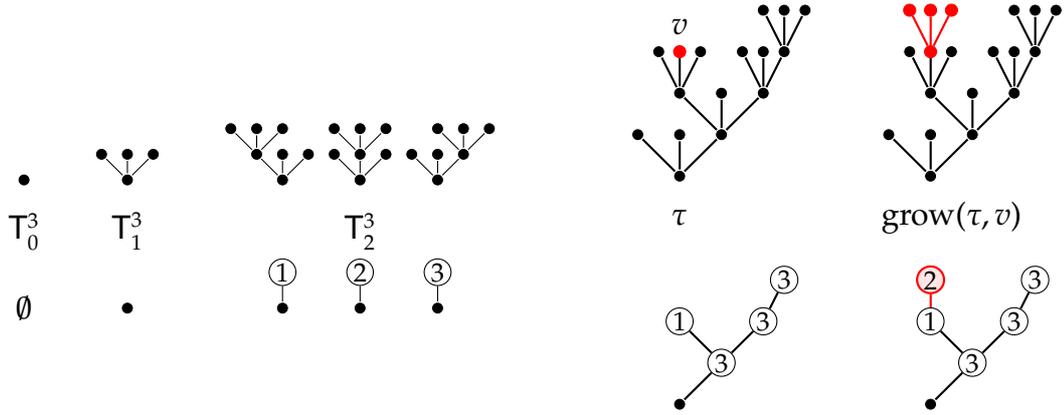

Finally, it is time to introduce a branching structure that is closer to the original presentation of the BDFG bijection:

\begin{definition}
Let $p\geq 2$ and $n\geq 1$ be integers. An \emph{unlabelled $p$-mobile of size $n$} is a rooted plane tree $\tau$ with the following properties:
\begin{itemize}
\item vertices of $V(\tau)$ are coloured either white or black and each edge of $\tau$ has a black endpoint and a white endpoint;
\item the origin is a white vertex;
\item each black vertex has degree $p$;
\item $\tau$ has $n$ black vertices.
\end{itemize}
A \emph{labelled $p$-mobile of size $n$} is an unlabelled $p$-mobile of size $n$ endowed with a labelling $l$ of its black vertices. Each black vertex is assigned an element in the set $S_p\subset\{\pm1\}^{2p-1}$ of strings of $\pm1$'s that have length $2p-1$ and sum 1 (or equivalently an integer between $1$ and $2p-1 \choose p$).
We call $\Mob^p_n$ the set of all unlabelled $p$-mobiles of size $n$ and $\LMob^p_n$ the set of all labelled $p$-mobiles of size $n$.

Conventionally, we let $\Mob^p_0=\LMob^p_0$ be the singleton of the \emph{degenerate $p$-mobile} consisting of a single white vertex.
\end{definition}

Given a corner $c$ of an unlabelled $p$-mobile $\tau$ that is adjacent to a white vertex, we let $\grow(\tau,c)$ be the $p$-mobile obtained by grafting a size 1 $p$-mobile (which is a star with a central black vertex and $p$ arms, rooted at one of the leaf corners) at the corner $c$ (see Figure~\ref{fig:growing mobiles}). If $\tau$ is a labelled $p$-mobile, for each $l\in S_p$ we let $\grow(\tau,c,l)$ be the labelled $p$-mobile obtained by grafting the star as above and labelling the new black vertex with the label $l$.
\begin{figure}\centering
\begin{tikzpicture}[vertex/.style={circle, fill=black, inner sep=2.2pt, draw=white, very thick}, gem/.style={circle, fill=white, inner sep=1.2pt, draw=black, thick}, simple/.style={thick}, scale=.9]
	\begin{pgfonlayer}{nodelayer}
		\node [style=vertex] (0) at (5.25, 2.75) {};
		\node [style=gem] (1) at (5.25, 2) {};
		\node [style=gem] (2) at (4.75, 3.5) {};
		\node [style=gem] (3) at (5.25, 3.5) {};
		\node [style=gem] (4) at (5.75, 3.5) {};
		\node [style=vertex] (5) at (6.25, 2.5) {};
		\node [style=gem] (6) at (6.25, 3.25) {};
		\node [style=gem] (7) at (6.75, 3.25) {};
		\node [style=gem] (8) at (7.25, 3.25) {};
		\node [style=vertex] (9) at (7.25, 4) {};
		\node [style=gem] (10) at (6.25, 4.75) {};
		\node [style=gem] (11) at (7, 4.75) {};
		\node [style=gem] (12) at (7.75, 4.75) {};
		\node [style=vertex] (13) at (4.75, 4.25) {};
		\node [style=gem] (14) at (4.25, 5) {};
		\node [style=gem] (15) at (4.75, 5) {};
		\node [style=gem] (16) at (5.25, 5) {};
		\node [style=vertex] (17) at (4.75, 6) {};
		\node [style=vertex] (18) at (5.75, 6) {};
		\node [style=gem] (19) at (4, 6.75) {};
		\node [style=gem] (20) at (4.5, 6.75) {};
		\node [style=gem] (21) at (5, 6.75) {};
		\node [style=gem] (22) at (5.5, 6.75) {};
		\node [style=gem] (23) at (6, 6.75) {};
		\node [style=gem] (24) at (6.5, 6.75) {};
		\node [style=vertex] (25) at (8.25, 5.5) {};
		\node [style=gem] (26) at (8.25, 6.25) {};
		\node [style=gem] (27) at (8.75, 6.25) {};
		\node [style=gem] (28) at (9.25, 6.25) {};
		\node [style=gem] (29) at (13, 3.5) {};
		\node [style=gem] (30) at (11.25, 6.75) {};
		\node [style=gem] (31) at (12, 3.5) {};
		\node [style=gem] (32) at (13.75, 6.75) {};
		\node [style=vertex] (33) at (12, 6) {};
		\node [style=gem] (34) at (13.5, 3.25) {};
		\node [style=vertex] (35) at (15.5, 5.5) {};
		\node [style=vertex] (36) at (12, 4.25) {};
		\node [style=gem] (37) at (13.5, 4.75) {};
		\node [style=gem] (38) at (15, 4.75) {};
		\node [style=gem] (39) at (12.5, 5) {};
		\node [style=gem] (40) at (16, 6.25) {};
		\node [style=gem] (41) at (14.25, 4.75) {};
		\node [style=gem] (42) at (12.5, 2) {};
		\node [style=gem] (43) at (12.5, 3.5) {};
		\node [style=gem] (44) at (13.25, 6.75) {};
		\node [style=vertex] (45) at (13.5, 2.5) {};
		\node [style=vertex] (46) at (14.5, 4) {};
		\node [style=gem] (47) at (14, 3.25) {};
		\node [style=vertex] (48) at (12.5, 2.75) {};
		\node [style=gem] (49) at (16.5, 6.25) {};
		\node [style=gem] (50) at (12.75, 6.75) {};
		\node [style=gem] (51) at (15.5, 6.25) {};
		\node [style=gem] (52) at (14.5, 3.25) {};
		\node [style=gem] (53) at (11.75, 6.75) {};
		\node [style=gem] (54) at (12.25, 6.75) {};
		\node [style=gem] (55) at (12, 5) {};
		\node [style=vertex] (56) at (13, 6) {};
		\node [style=gem] (57) at (11.5, 5) {};
		\node [style=vertex, fill=red] (58) at (14.5, 5.5) {};
		\node [style=gem, draw=red] (59) at (14, 6.25) {};
		\node [style=gem, draw=red] (60) at (14.5, 6.25) {};
		\node [style=gem, draw=red] (61) at (15, 6.25) {};
		\node at (5.25,1) {$\tau$};
		\node at (13,1) {$\grow(\tau,c)$};
	\end{pgfonlayer}
	\begin{pgfonlayer}{edgelayer}
	\begin{scope}
	\clip (9.center) to (25.center) to (13.center) to (9.center);
	\fill[red!20, draw=red] (7.75, 4.75) circle (8pt);
	\end{scope}
	\node[red] at (7.75, 5.25) {$c$};
		\draw [style=simple] (0) to (1);
		\draw [style=simple] (5) to (1);
		\draw [style=simple] (2) to (0);
		\draw [style=simple] (3) to (0);
		\draw [style=simple] (4) to (0);
		\draw [style=simple] (6) to (5);
		\draw [style=simple] (7) to (5);
		\draw [style=simple] (8) to (5);
		\draw [style=simple] (9) to (8);
		\draw [style=simple] (10) to (9);
		\draw [style=simple] (11) to (9);
		\draw [style=simple] (12) to (9);
		\draw [style=simple] (13) to (2);
		\draw [style=simple] (14) to (13);
		\draw [style=simple] (15) to (13);
		\draw [style=simple] (16) to (13);
		\draw [style=simple] (19) to (17);
		\draw [style=simple] (20) to (17);
		\draw [style=simple] (21) to (17);
		\draw [style=simple] (22) to (18);
		\draw [style=simple] (23) to (18);
		\draw [style=simple] (24) to (18);
		\draw [style=simple] (17) to (16);
		\draw [style=simple] (18) to (16);
		\draw [style=simple] (12) to (25);
		\draw [style=simple] (26) to (25);
		\draw [style=simple] (25) to (27);
		\draw [style=simple] (25) to (28);
		\draw [style=simple] (48) to (42);
		\draw [style=simple] (45) to (42);
		\draw [style=simple] (31) to (48);
		\draw [style=simple] (43) to (48);
		\draw [style=simple] (29) to (48);
		\draw [style=simple] (34) to (45);
		\draw [style=simple] (47) to (45);
		\draw [style=simple] (52) to (45);
		\draw [style=simple] (46) to (52);
		\draw [style=simple] (37) to (46);
		\draw [style=simple] (41) to (46);
		\draw [style=simple] (38) to (46);
		\draw [style=simple] (36) to (31);
		\draw [style=simple] (57) to (36);
		\draw [style=simple] (55) to (36);
		\draw [style=simple] (39) to (36);
		\draw [style=simple] (30) to (33);
		\draw [style=simple] (53) to (33);
		\draw [style=simple] (54) to (33);
		\draw [style=simple] (50) to (56);
		\draw [style=simple] (44) to (56);
		\draw [style=simple] (32) to (56);
		\draw [style=simple] (33) to (39);
		\draw [style=simple] (56) to (39);
		\draw [style=simple] (38) to (35);
		\draw [style=simple] (51) to (35);
		\draw [style=simple] (35) to (40);
		\draw [style=simple] (35) to (49);
		\draw [style=simple, red] (59) to (58);
		\draw [style=simple, red] (60) to (58);
		\draw [style=simple, red] (61) to (58);
		\draw [style=simple, red] (58) to (38);
	\end{pgfonlayer}
\end{tikzpicture}
\caption{\label{fig:growing mobiles}On the left, an unlabelled mobile $\tau\in \Mob_7^4$ with a marked corner $c$. On the right, the mobile $\grow(\tau,c)\in\Mob_8^4$ obtained by grafting the size one $4$-mobile onto the corner $c$.}
\end{figure}
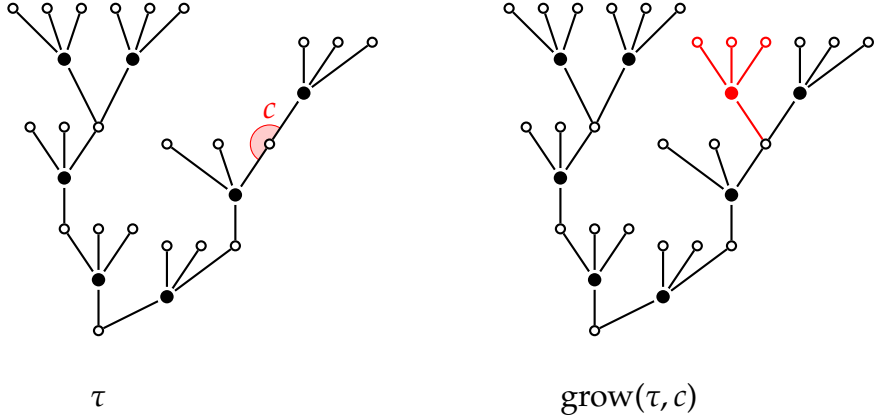

We now make some observations about unlabelled $d$-mobiles of size $n$, their enumeration and their relation to $d$-ary trees. We will then discuss how labelled $d$-mobiles can be interpreted as $2d$-angulations.

\begin{remark}\label{rem:mobile to tree}
Any element $\tau\in\Mob^p_n$, where $n\geq 1$, can be represented as an ordered $p$-tuple $(\tau_1,\ldots,\tau_p)\in\cup_{n_1+\ldots+n_{p}=n-1}\Mob^p_{n_1}\times\ldots\times\Mob^p_{n_{p}}$ by considering the rightmost black vertex of height 1 in $\tau$, denoted by $v$, and letting $\tau_1$ be the mobile obtained by erasing $v$ and all of its descendants; we then let $\tau_2,\ldots,\tau_p,$ be the submobiles of descendants of the $p-1$ children of $v$, ordered from left to right. This immediately yields the recursion
$$|\Mob^p_n|=\sum_{n_1+\ldots+n_p=n-1}\prod_{i=1}^p |\Mob^p_{n_i}|.$$

Clearly, complete $p$-ary trees follow the same recursive structure (a tree in $\CT^p_n$ corresponds to the $p$-tuple of subtrees of descendants of the $p$ children of the origin), and we have $|\Mob^p_0|=|\CT^p_0|=1$, which yields that
\begin{equation}\label{CT and Mob cardinality}|\Mob^p_n|=|\CT^p_n|=\frac{1}{(p-1)n+1}{pn \choose n}.\end{equation}

Not only that, but the recursion yields an explicit bijection $\Phi:\Mob^p_n\to\CT^p_n$ that one can define inductively: given $\tau\in\Mob^p_n$, $\Phi(\tau)$ is obtained by considering the complete $p$-ary trees $\Phi(\tau_1),\ldots,\Phi(\tau_p)$ and grafting them in order (from left to right) onto the $p$ leaves of a complete $p$-ary tree of size $1$ (see Figure~\ref{fig:mob to d-ary tree} for an example of this correspondence).

As for labelled $p$-mobiles, we have that $|\LMob_n^p|={2p-1\choose p}^n|\Mob_n^p|$, and the bijection $\Phi$ can be used to interpret elements of $\LMob_n^p$ as pairs $(t,l)$, where $t\in \CT^p_n$ and $l$ is a map from the set of vertices of $t$ that are not leaves to the set $\{1,\ldots, {2p-1 \choose p}\}$.
\end{remark}

\begin{figure}\centering
\begin{tikzpicture}[vertex/.style={circle, fill=black, inner sep=2.2pt, draw=white, very thick}, gem/.style={circle, fill=white, inner sep=1.2pt, draw=black, thick}, simple/.style={thick}, scale=.9]
	\begin{pgfonlayer}{nodelayer}
		\node [style=gem, draw=blue] (29) at (13, 3.5) {};
		\node [style=gem, draw=blue] (30) at (11.25, 6.75) {};
		\node [style=gem, draw=blue] (31) at (12, 3.5) {};
		\node [style=gem, draw=blue] (32) at (13.75, 6.75) {};
		\node [style=vertex, fill=blue] (33) at (12, 6) {};
		\node [style=gem, draw=purple] (34) at (13.5, 3.25) {};
		\node [style=vertex, fill=red] (35) at (15.5, 5.5) {};
		\node [style=vertex, fill=blue] (36) at (12, 4.25) {};
		\node [style=gem, draw=red] (37) at (13.5, 4.75) {};
		\node [style=gem, draw=red] (38) at (15, 4.75) {};
		\node [style=gem, draw=blue] (39) at (12.5, 5) {};
		\node [style=gem, draw=red] (40) at (16, 6.25) {};
		\node [style=gem, draw=red] (41) at (14.25, 4.75) {};
		\node [style=gem, draw=blue] (42) at (12.5, 2) {};
		\node [style=gem, draw=blue] (43) at (12.5, 3.5) {};
		\node [style=gem, draw=blue] (44) at (13.25, 6.75) {};
		\node [style=vertex] (45) at (13.5, 2.5) {};
		\node [style=vertex, fill=red] (46) at (14.5, 4) {};
		\node [style=gem, draw=green!50!black] (47) at (14, 3.25) {};
		\node [style=vertex, fill=blue] (48) at (12.5, 2.75) {};
		\node [style=gem, draw=red] (49) at (16.5, 6.25) {};
		\node [style=gem, draw=blue] (50) at (12.75, 6.75) {};
		\node [style=gem, draw=red] (51) at (15.5, 6.25) {};
		\node [style=gem, draw=red] (52) at (14.5, 3.25) {};
		\node [style=gem, draw=blue] (53) at (11.75, 6.75) {};
		\node [style=gem, draw=blue] (54) at (12.25, 6.75) {};
		\node [style=gem, draw=blue] (55) at (12, 5) {};
		\node [style=vertex, fill=blue] (56) at (13, 6) {};
		\node [style=gem, draw=blue] (57) at (11.5, 5) {};
		\node [style=vertex, fill=red] (58) at (14.5, 5.5) {};
		\node [style=gem, draw=red] (59) at (14, 6.25) {};
		\node [style=gem, draw=red] (60) at (14.5, 6.25) {};
		\node [style=gem, draw=red] (61) at (15, 6.25) {};
		\node at (13,1) {$\tau=(\textcolor{blue}{\tau_1},\textcolor{purple}{\tau_2},\textcolor{green!50!black}{\tau_3},\textcolor{red}{\tau_4})\in\Mob^4_8$};
	\end{pgfonlayer}
	\begin{pgfonlayer}{edgelayer}
		\draw [style=simple, blue] (48) to (42);
		\draw [style=simple] (45) to (42);
		\draw [style=simple, blue] (31) to (48);
		\draw [style=simple, blue] (43) to (48);
		\draw [style=simple, blue] (29) to (48);
		\draw [style=simple] (34) to (45);
		\draw [style=simple] (47) to (45);
		\draw [style=simple] (52) to (45);
		\draw [style=simple, red] (46) to (52);
		\draw [style=simple, red] (37) to (46);
		\draw [style=simple, red] (41) to (46);
		\draw [style=simple, red] (38) to (46);
		\draw [style=simple, blue] (36) to (31);
		\draw [style=simple, blue] (57) to (36);
		\draw [style=simple, blue] (55) to (36);
		\draw [style=simple, blue] (39) to (36);
		\draw [style=simple, blue] (30) to (33);
		\draw [style=simple, blue] (53) to (33);
		\draw [style=simple, blue] (54) to (33);
		\draw [style=simple, blue] (50) to (56);
		\draw [style=simple, blue] (44) to (56);
		\draw [style=simple, blue] (32) to (56);
		\draw [style=simple, blue] (33) to (39);
		\draw [style=simple, blue] (56) to (39);
		\draw [style=simple, red] (38) to (35);
		\draw [style=simple, red] (51) to (35);
		\draw [style=simple, red] (35) to (40);
		\draw [style=simple, red] (35) to (49);
		\draw [style=simple, red] (59) to (58);
		\draw [style=simple, red] (60) to (58);
		\draw [style=simple, red] (61) to (58);
		\draw [style=simple, red] (58) to (38);
	\end{pgfonlayer}
\end{tikzpicture}
\begin{tikzpicture}[vertex/.style={circle, fill=black, inner sep=2.2pt, draw=white, very thick}, gem/.style={circle, fill=white, inner sep=1.2pt, draw=black, thick}, simple/.style={thick}, scale=.9]
	\begin{pgfonlayer}{nodelayer}
		\node at (0,-3) {$\Phi(\tau)\in\CT^4_8$};
		\node [style=vertex] (0) at (0, -2) {};
		\node [style=vertex, fill=blue] (1) at (-0.75, -1.25) {};
		\node [style=vertex, fill=purple] (2) at (-0.25, -1.25) {};
		\node [style=vertex, fill=green!50!black] (3) at (0.25, -1.25) {};
		\node [style=vertex, fill=red] (4) at (0.75, -1.25) {};
		\node [style=vertex, fill=red] (5) at (0.25, -0.5) {};
		\node [style=vertex, fill=red] (6) at (0.75, -0.5) {};
		\node [style=vertex, fill=red] (7) at (1.25, -0.5) {};
		\node [style=vertex, fill=red] (8) at (1.75, -0.5) {};
		\node [style=vertex, fill=red] (9) at (1.75, 0.25) {};
		\node [style=vertex, fill=red] (10) at (2.25, 0.25) {};
		\node [style=vertex, fill=red] (11) at (2.75, 0.25) {};
		\node [style=vertex, fill=red] (12) at (3.25, 0.25) {};
		\node [style=vertex, fill=red] (13) at (1, 1) {};
		\node [style=vertex, fill=red] (14) at (1.5, 1) {};
		\node [style=vertex, fill=red] (15) at (2, 1) {};
		\node [style=vertex, fill=red] (16) at (2.5, 1) {};
		\node [style=vertex, fill=blue] (17) at (-2, -0.5) {};
		\node [style=vertex, fill=blue] (18) at (-1.5, -0.5) {};
		\node [style=vertex, fill=blue] (19) at (-1, -0.5) {};
		\node [style=vertex, fill=blue] (20) at (-0.5, -0.5) {};
		\node [style=vertex, fill=blue] (21) at (-2, 0.25) {};
		\node [style=vertex, fill=blue] (22) at (-1.5, 0.25) {};
		\node [style=vertex, fill=blue] (23) at (-1, 0.25) {};
		\node [style=vertex, fill=blue] (24) at (-0.5, 0.25) {};
		\node [style=vertex, fill=blue] (25) at (-1, 1) {};
		\node [style=vertex, fill=blue] (26) at (-0.5, 1) {};
		\node [style=vertex, fill=blue] (27) at (0, 1) {};
		\node [style=vertex, fill=blue] (28) at (0.5, 1) {};
		\node [style=vertex, fill=blue] (29) at (-1.75, 1.75) {};
		\node [style=vertex, fill=blue] (30) at (-1.25, 1.75) {};
		\node [style=vertex, fill=blue] (31) at (-0.75, 1.75) {};
		\node [style=vertex, fill=blue] (32) at (-0.25, 1.75) {};
	\end{pgfonlayer}
	\begin{pgfonlayer}{edgelayer}
		\draw [style=simple] (1) to (0);
		\draw [style=simple] (2) to (0);
		\draw [style=simple] (3) to (0);
		\draw [style=simple] (4) to (0);
		\draw [style=simple, red] (5) to (4);
		\draw [style=simple, red] (6) to (4);
		\draw [style=simple, red] (7) to (4);
		\draw [style=simple, red] (4) to (8);
		\draw [style=simple, red] (9) to (8);
		\draw [style=simple, red] (10) to (8);
		\draw [style=simple, red] (11) to (8);
		\draw [style=simple, red] (12) to (8);
		\draw [style=simple, red] (13) to (9);
		\draw [style=simple, red] (14) to (9);
		\draw [style=simple, red] (15) to (9);
		\draw [style=simple, red] (16) to (9);
		\draw [style=simple, blue] (21) to (18);
		\draw [style=simple, blue] (22) to (18);
		\draw [style=simple, blue] (23) to (18);
		\draw [style=simple, blue] (24) to (18);
		\draw [style=simple, blue] (17) to (1);
		\draw [style=simple, blue] (18) to (1);
		\draw [style=simple, blue] (19) to (1);
		\draw [style=simple, blue] (20) to (1);
		\draw [style=simple, blue] (29) to (25);
		\draw [style=simple, blue] (30) to (25);
		\draw [style=simple, blue] (31) to (25);
		\draw [style=simple, blue] (25) to (32);
		\draw [style=simple, blue] (25) to (24);
		\draw [style=simple, blue] (26) to (24);
		\draw [style=simple, blue] (27) to (24);
		\draw [style=simple, blue] (28) to (24);
	\end{pgfonlayer}
\end{tikzpicture}
\caption{\label{fig:mob to d-ary tree}On the left, a mobile $\tau\in\Mob_8^4$ and its decomposition into the blue, purple, green and red submobiles $\tau_1\in\Mob_4^4$, $\tau_2\in\Mob_0^4$, $\tau_3\in\Mob_0^4$, $\tau_4\in\Mob_3^4$. On the right, the corresponding 4-ary tree $\Phi(\tau)$ obtained by grafting $\Phi(\tau_1), \Phi(\tau_2), \Phi(\tau_3), \Phi(\tau_4)$ onto the four children of the origin.}
\end{figure}
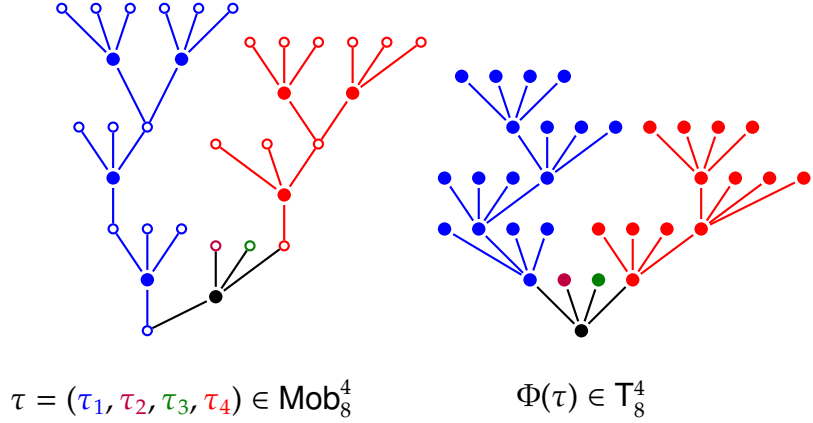

The final link we need to describe is the one between $2p$-angulations and labelled $p$-mobiles, which has been thoroughly explored \cite{BDFG04}. We shall outline the BDFG construction of a pointed, rooted $2p$-angulation $\Psi(\tau,\epsilon)\in\M^{2p,\bullet}_n$ from a pair $(\tau,\epsilon)\in\LMob^p_n\times \{+1,-1\}$ and summarise at the end of this section the main features of this correspondence $\Psi$ that we will need.

\begin{construction}[Bouttier, Di Francesco, Guitter]\label{BDFG}
Given a pair $(\tau,\epsilon)\in\LMob^p_n\times\{\pm1\}$, construct the map $\Psi(\tau)$ as follows:
\begin{itemize}
\item first, deduce labels for the white vertices from the labels of the black vertices, as follows. Label the origin, which is a white vertex, with zero. Suppose $v$ is a black vertex labelled $l=(l_1,l_2,\ldots,l_{2p-1})\in S_p\subset\{\pm1\}^{2p-1}$ such that the label of its (white) parent has already been determined and is equal to $a$. Let $i_1<i_2<\ldots<i_{p-1}$ be such that $l_{i_1}=\ldots=l_{i_{p-1}}=-1$. Label the $p-1$ (white) children of $v$ from left to right as $a+i_j-2j$ for $j=1,\ldots,p-1$. Continue until all white vertices are labelled with an integer (see Figure~\ref{fig:BDFG}).  
\item Draw an additional white vertex $\delta$.
\item For each corner $c$ of a white vertex other than $\delta$,  draw a new ``map edge'' $e(c)$ issued from $c$. Assuming the white vertex carrying $c$ is labelled $a$, let $t(c)$ (the ``target corner'' of $c$) be the first corner of a white vertex labelled $a-1$ in a clockwise contour of the mobile started from the corner $c$, if such a corner exists. If the label $a-1$ does not appear on white vertices, then set $t(c)$ to be the corner of $\delta$. For each $c$, draw an edge $e(c)$ joining $c$ to $t(c)$, taking care to perform this operation in such a way that edges do not cross (which is always possible).
\item Erase all edges of the original mobile and all black vertices. Forget all labels. Root the ensuing map in the edge drawn from the root corner of $\tau$, oriented away from the corner if $\epsilon=1$ and towards the corner if $\epsilon=-1$. Make $\delta$ the distinguished vertex.
\end{itemize}
\end{construction}

For most applications, an important feature of Construction~\ref{BDFG} is the significance of the labels in terms of graph distances on the $2p$-angulation. This will not be the case in this paper; the main features we need are given by
\begin{proposition}
For each $n\geq 1$, the map $\Psi$ of Construction~\ref{BDFG} is a bijection from the set $\LMob^p_n\times \{\pm1\}$ to the set $\M^{2p,\bullet}_n$ of pointed, rooted $2p$-angulations with $n$ faces. Given $\tau\in\LMob^p_n$, this bijection induces a correspondence between black vertices of $\tau$ and faces of $\Psi(\tau)$ and between white vertices of $\tau$ and vertices of $\Psi(\tau)$ other than the distinguished vertex $\delta$ given by the pointing. Moreover, it induces a correspondence between corners of black vertices of $\tau$ and corners of $\Psi(\tau)$, and between corners of white vertices of $\tau$ and edges of $\Psi(\tau)$. The root corner of $\tau$ corresponds to the root edge of $\Psi(\tau)$.
\end{proposition}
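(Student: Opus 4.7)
The plan is to prove each of the stated properties by a careful bookkeeping of Construction~\ref{BDFG}, together with an explicit inverse. Since this is essentially the content of the BDFG theorem, the strategy is to identify which verifications are needed and indicate how each one follows.

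First I would verify that Construction~\ref{BDFG} is well-defined. The white-vertex labels are uniquely determined by the recursion from the origin, and one checks by induction that along any corner-to-corner clockwise contour step between two adjacent white corners (passing through a black vertex) the label changes by exactly $\pm 1$: indeed, from the definition of the labelling, two consecutive children of a black vertex labelled $l=(l_1,\ldots,l_{2p-1})$ differ by $l_{i_j}+\ldots+l_{i_{j+1}-1}-2 = (\text{the entries strictly between positions } i_j \text{ and } i_{j+1}, \text{all equal to } +1)-2$, which is $\pm1$. Consequently, for each corner $c$ of a white vertex with label $a$, the next corner with label $a-1$ along the clockwise contour (or $\delta$, if no such corner exists) is unambiguous; the standard ``arch matching'' argument then shows that the edges $e(c)$ can be drawn inside the faces of the underlying planar embedding of $\tau$ without crossings, so $\Psi(\tau,\epsilon)$ is a well-defined planar map.

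Next I would check the face-degree condition. Each black vertex $v$ of $\tau$, once the mobile's edges and $v$ itself are erased, becomes engulfed in a single face $f_v$ of $\Psi(\tau,\epsilon)$. The boundary of $f_v$ is obtained by traversing the $p$ white corners around $v$ (one at the parent, $p-1$ at the children) and, for each such corner $c$, following the arch $e(c)$ out and then the arch incoming into the \emph{next} white corner around $v$; this produces exactly $2p$ map edges around $f_v$, so $\Psi(\tau,\epsilon)$ is a $2p$-angulation. In the same breath, the correspondences follow: black vertices $\leftrightarrow$ faces by $v\mapsto f_v$; corners of black vertices $\leftrightarrow$ corners of $\Psi(\tau,\epsilon)$ (each black corner sits between two consecutive archs on $\partial f_v$); white vertices $\leftrightarrow$ vertices of $\Psi(\tau,\epsilon)\setminus\{\delta\}$ (white vertices are kept, and $\delta$ is added); corners of white vertices $\leftrightarrow$ edges of $\Psi(\tau,\epsilon)$ (every edge of the map is $e(c)$ for a unique white corner $c$). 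The root correspondence is immediate from the last bullet of Construction~\ref{BDFG}, with $\epsilon$ encoding the orientation of the root edge.

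Finally, I would establish bijectivity by describing an inverse. Given $(m,\delta)\in\M^{2p,\bullet}_n$, label each vertex $u\neq \delta$ by its graph distance $d_m(u,\delta)$; place one black vertex inside each face of $m$. Inside each face, the boundary corners carry labels, and one orients each edge of $m$ from the corner with label $a$ to the (unique) next corner around the face with label $a-1$ (the existence and uniqueness of such a successor inside each face is exactly dual to the arch argument above). Joining each black vertex to each source corner of its face, and then erasing the edges of $m$, produces a labelled $p$-mobile $\tau$; the sign $\epsilon\in\{\pm1\}$ is read off from the orientation of the root edge of $m$ relative to the root corner thus recovered. Checking that this inverse is well-defined and genuinely inverts $\Psi$ is a direct unwinding of the constructions, and it is the only step that contains non-trivial content; it is exactly the content of~\cite{BDFG04} and I would simply refer to it for the detailed verification. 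Injectivity and surjectivity of $\Psi$ then follow at once, and the counting $|\M^{2p,\bullet}_n|=2|\LMob^p_n|$ serves as a sanity check.
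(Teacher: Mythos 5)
The paper does not actually prove this proposition: it is stated as a summary of the Bouttier--Di Francesco--Guitter bijection and the reader is referred to \cite{BDFG04} for the details. Your strategy of sketching the verifications and deferring the core inversion argument to \cite{BDFG04} is therefore aligned with what the paper itself does, and the overall architecture (well-definedness, face degrees, the distance-labelling inverse) is the standard one.

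However, the one computation you commit to explicitly is wrong. You claim that between two adjacent white corners in the clockwise contour around a black vertex the label changes by exactly $\pm 1$. From the paper's labelling rule, the $j$-th and $(j+1)$-th children of a black vertex carry labels $a+i_j-2j$ and $a+i_{j+1}-2(j+1)$, whose difference is $i_{j+1}-i_j-2$; since the only constraint is $i_{j+1}>i_j$, this difference is any integer $\geq -1$ (equivalently, the child labels are the partial sums $a+\sum_{k\leq i_j}l_k$ of the $\pm1$ string, which can jump up by several units between consecutive $-1$'s). The property that actually makes the construction work --- existence of the target corner $t(c)$ and the non-crossing of the arches --- is precisely that the label never \emph{decreases} by more than $1$ along the contour, not that it changes by exactly $1$. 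The same imprecision affects your face-degree count: when the label jumps up between two consecutive white corners of $v$, the boundary of $f_v$ contains a chain of several arches between them rather than a single outgoing/incoming pair, so the count of $2p$ requires using the structure of the string in $S_p$ (length $2p-1$, sum $1$) rather than the ``one out, one in per white corner'' picture. These are standard and repairable points, and they are exactly what \cite{BDFG04} handles, but as written your intermediate claims are false and should be corrected to the one-sided bound before the deferral.
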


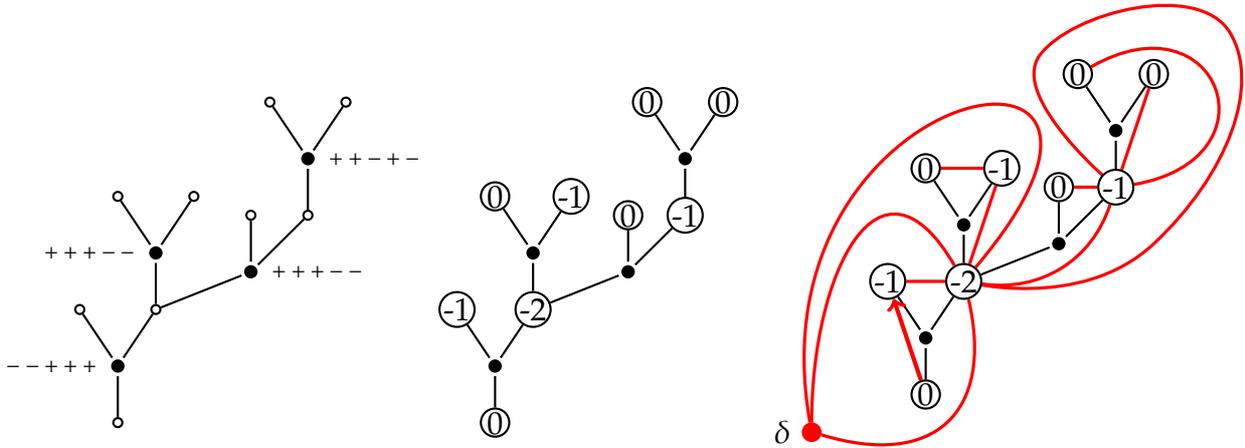
\begin{figure}\centering
\begin{tikzpicture}[vertex/.style={circle, fill=black, inner sep=2.2pt, draw=white, very thick}, gem/.style={circle, fill=white, inner sep=1.2pt, draw=black, thick}, simple/.style={thick}, scale=1]
	\begin{pgfonlayer}{nodelayer}
		\node [style=gem] (0) at (0, -0) {};
		\node [style=vertex, label=left:\scriptsize$--+++$] (1) at (0, 0.75) {};
		\node [style=gem] (2) at (-0.5, 1.5) {};
		\node [style=gem] (3) at (0.5, 1.5) {};
		\node [style=vertex, label=left:\scriptsize$+++--$] (4) at (0.5, 2.25) {};
		\node [style=gem] (5) at (0, 3) {};
		\node [style=gem] (6) at (1, 3) {};
		\node [style=vertex, label=right:\scriptsize$+++--$] (7) at (1.75, 2) {};
		\node [style=gem] (8) at (1.75, 2.75) {};
		\node [style=gem] (9) at (2.5, 2.75) {};
		\node [style=vertex, label=right:\scriptsize$++-+-$] (10) at (2.5, 3.5) {};
		\node [style=gem] (11) at (2, 4.25) {};
		\node [style=gem] (12) at (3, 4.25) {};
		\node (delta) at (0,-0.5) {};
	\end{pgfonlayer}
	\begin{pgfonlayer}{edgelayer}
		\draw [style=simple] (1) to (0);
		\draw [style=simple] (2) to (1);
		\draw [style=simple] (3) to (1);
		\draw [style=simple] (4) to (3);
		\draw [style=simple] (5) to (4);
		\draw [style=simple] (6) to (4);
		\draw [style=simple] (3) to (7);
		\draw [style=simple] (8) to (7);
		\draw [style=simple] (9) to (7);
		\draw [style=simple] (10) to (9);
		\draw [style=simple] (11) to (10);
		\draw [style=simple] (12) to (10);
	\end{pgfonlayer}
\end{tikzpicture}
\begin{tikzpicture}[vertex/.style={circle, fill=black, inner sep=2.2pt, draw=white, very thick}, gem/.style={circle, fill=white, inner sep=0pt, draw=black, thick}, simple/.style={thick}, scale=1]
	\begin{pgfonlayer}{nodelayer}
		\node [style=gem] (0) at (0, -0) {0};
		\node [style=vertex] (1) at (0, 0.75) {};
		\node [style=gem] (2) at (-0.5, 1.5) {-1};
		\node [style=gem] (3) at (0.5, 1.5) {-2};
		\node [style=vertex] (4) at (0.5, 2.25) {};
		\node [style=gem] (5) at (0, 3) {0};
		\node [style=gem] (6) at (1, 3) {-1};
		\node [style=vertex] (7) at (1.75, 2) {};
		\node [style=gem] (8) at (1.75, 2.75) {0};
		\node [style=gem] (9) at (2.5, 2.75) {-1};
		\node [style=vertex] (10) at (2.5, 3.5) {};
		\node [style=gem] (11) at (2, 4.25) {0};
		\node [style=gem] (12) at (3, 4.25) {0};
		\node (delta) at (0,-0.5) {};
	\end{pgfonlayer}
	\begin{pgfonlayer}{edgelayer}
		\draw [style=simple] (1) to (0);
		\draw [style=simple] (2) to (1);
		\draw [style=simple] (3) to (1);
		\draw [style=simple] (4) to (3);
		\draw [style=simple] (5) to (4);
		\draw [style=simple] (6) to (4);
		\draw [style=simple] (3) to (7);
		\draw [style=simple] (8) to (7);
		\draw [style=simple] (9) to (7);
		\draw [style=simple] (10) to (9);
		\draw [style=simple] (11) to (10);
		\draw [style=simple] (12) to (10);
	\end{pgfonlayer}
\end{tikzpicture}
\begin{tikzpicture}[vertex/.style={circle, fill=black, inner sep=2.2pt, draw=white, very thick}, gem/.style={circle, fill=white, inner sep=0pt, draw=black, thick}, simple/.style={thick}, map/.style={red, very thick}, scale=1]
	\begin{pgfonlayer}{nodelayer}
		\node [style=gem] (0) at (0, -0) {0};
		\node [style=vertex] (1) at (0, 0.75) {};
		\node [style=gem] (2) at (-0.5, 1.5) {-1};
		\node [style=gem] (3) at (0.5, 1.5) {-2};
		\node [style=vertex] (4) at (0.5, 2.25) {};
		\node [style=gem] (5) at (0, 3) {0};
		\node [style=gem] (6) at (1, 3) {-1};
		\node [style=vertex] (7) at (1.75, 2) {};
		\node [style=gem] (8) at (1.75, 2.75) {0};
		\node [style=gem] (9) at (2.5, 2.75) {-1};
		\node [style=vertex] (10) at (2.5, 3.5) {};
		\node [style=gem] (11) at (2, 4.25) {0};
		\node [style=gem] (12) at (3, 4.25) {0};
		\node [style=vertex, red, label=left:$\delta$] (delta) at (-1.5,-0.5) {};
	\end{pgfonlayer}
	\begin{pgfonlayer}{edgelayer}
		\draw [style=simple] (1) to (0);
		\draw [style=simple] (2) to (1);
		\draw [style=simple] (3) to (1);
		\draw [style=simple] (4) to (3);
		\draw [style=simple] (5) to (4);
		\draw [style=simple] (6) to (4);
		\draw [style=simple] (3) to (7);
		\draw [style=simple] (8) to (7);
		\draw [style=simple] (9) to (7);
		\draw [style=simple] (10) to (9);
		\draw [style=simple] (11) to (10);
		\draw [style=simple] (12) to (10);
		\draw[map] (2)--(3) (5)--(6) (8)--(9)--(12) (6)--(3);
		\draw[map, ->, ultra thick] (0)--(2);
		\draw[map, bend left=40] (9) to (3);
		\draw[map, in=120, out=90, looseness=1.8] (delta) to (3);
		\draw[map, in=50, out=100, looseness=4] (delta) to (3);
		\draw[map, bend right=60, looseness=1.5] (delta) to (3);
		\draw[map, bend left=100, looseness=3] (11) to (9);
		\draw[map] plot [smooth, tension=1] coordinates {(2.5,2.75) (1.5, 4.5) (4,4.8) (3,2) (0.5,1.5)};
	\end{pgfonlayer}
\end{tikzpicture}
\caption{\label{fig:BDFG}On the left, a labelled mobile $\tau\in\LMob^3_4$. In the middle, the labels of black vertices are turned into integer labels for white vertices. On the right, the full construction of the pointed 6-angulation $\Psi(\tau,1)$, consisting of the red edges, white vertices and distinguished vertex $\delta$. Note how every face contains a single black vertex and has degree 6.}
\end{figure}

\section{Simple triangulations and blossoming trees}\label{triangulations, blossoming trees}

\begin{definition}
A \emph{simple triangulation} of size $n$ is a rooted map in $\M^3_n$ with no loops or multiple edges.
\end{definition}

Note that since in a simple triangulation every edge is adjacent to exactly two faces (this is not necessarily the case if the triangulation is not simple: if loops are allowed, then a triangular face might contain one edge in its interior), the number of faces has to be even; indeed, the number of edges of a simple triangulation with $F$ faces is $E=\frac32 F$. Moreover, Euler's formula yields that the triangulation must have $V=\frac12 F+2$ vertices.

It is therefore convenient to index simple triangulations by half their number of faces: we will denote the set of all simple triangulations of size $2n$ by $\STr_n$; they have $2n$ faces, $n+2$ vertices and $3n$ edges.

A convenient tree structure to define in conjunction to simple triangulation is blossoming trees, used in \cite{PS06}:

\begin{definition}
A \emph{blossoming tree} of size $n$ (Figure~\ref{fig:PS}, left) is a rooted plane tree $t$ whose vertices are of two types, \emph{blossoms} and \emph{non-blossoms}, with the following properties:
\begin{itemize}
\item every blossom has degree 1 (it is a leaf);
\item every non-blossom is adjacent to exactly two blossoms;
\item $t$ has exactly $n$ non-blossoms;
\item the origin of $t$ is a blossom.
\end{itemize}
We denote by $|t|$ the size of the blossoming tree $t$ and by $\BT_n$ the set of all blossoming trees of size $n$. Note that any blossoming tree of size $n$ has exactly $2n$ blossoms.	
\end{definition}

\begin{figure}
\centering
\begin{tikzpicture}[vertex/.style={circle, inner sep=2pt, fill=black}, blossom/.style={diamond, inner sep=2pt, fill=green!70!black}, branch/.style={very thick, brown}, gem/.style={green!70!black}]
\begin{pgfonlayer}{nodelayer}
		\node [style=blossom, label=below:\phantom{$R_3$}] (0) at (-3, -1.5) {};
		\node [style=vertex] (1) at (-3, -1) {};
		\node [style=vertex] (2) at (-4.25, -0) {};
		\node [style=vertex] (3) at (-4.25, 1) {};
		\node [style=vertex] (4) at (-3, -0) {};
		\node [style=vertex] (5) at (-2, -0) {};
		\node [style=vertex] (6) at (-1.25, 0.75) {};
		\node [style=blossom] (7) at (-3.25, -0.5) {};
		\node [style=blossom] (8) at (-4.75, -0) {};
		\node [style=blossom] (9) at (-4.75, 0.5) {};
		\node [style=blossom] (10) at (-4.5, 1.5) {};
		\node [style=blossom] (11) at (-4, 1.5) {};
		\node [style=blossom] (12) at (-3.25, 0.5) {};
		\node [style=blossom] (13) at (-2.75, 0.5) {};
		\node [style=blossom] (14) at (-2, 0.5) {};
		\node [style=blossom] (15) at (-1.5, -0) {};
		\node [style=blossom] (16) at (-1.5, 1.25) {};
		\node [style=blossom] (17) at (-1, 1.25) {};
	\end{pgfonlayer}
	\begin{pgfonlayer}{edgelayer}
		\draw [gem] (1) to (0);
		\draw [branch] (4) to (1);
		\draw [branch] (2) to (1);
		\draw [branch] (5) to (1);
		\draw [branch] (3) to (2);
		\draw [branch] (6) to (5);
		\draw [gem](8) to (2);
		\draw [gem](9) to (2);
		\draw [gem](10) to (3);
		\draw [gem](3) to (11);
		\draw [gem](7) to (1);
		\draw [gem](12) to (4);
		\draw [gem](4) to (13);
		\draw [gem](14) to (5);
		\draw [gem](5) to (15);
		\draw [gem](16) to (6);
		\draw [gem](6) to (17);
	\end{pgfonlayer}
\end{tikzpicture}
\begin{tikzpicture}[vertex/.style={circle, inner sep=2pt, fill=black}, blossom/.style={diamond, inner sep=2pt, fill=green!70!black}, branch/.style={very thick, brown},gem/.style={green!70!black}]
	\begin{pgfonlayer}{nodelayer}
		\node [style=blossom, label=below:$R_3$] (0) at (-3, -1.5) {};
		\node [style=vertex] (1) at (-3, -1) {};
		\node [style=vertex] (2) at (-4.25, -0) {};
		\node [style=vertex] (3) at (-4.25, 1) {};
		\node [style=vertex] (4) at (-3, -0) {};
		\node [style=vertex] (5) at (-2, -0) {};
		\node [style=vertex] (6) at (-1.25, 0.75) {};
		\node [style=blossom, label={$L_3$}] (7) at (-3.25, -0.25) {};
		\node [style=blossom, label=left:$R_4$] (8) at (-4.75, -0) {};
		\node [style=blossom, label=$L_1$] (9) at (-4.75, 0.5) {};
		\node [style=blossom, label=$L_2$] (10) at (-4.5, 1.5) {};
		\node [style=blossom, label=$L_4$] (11) at (-3, 0.5) {};
		\node [style=blossom, label=$L_5$] (12) at (-2, 0.5) {};
		\node [style=blossom, label=right:$R_2$] (13) at (-1.5, -0) {};
		\node [style=blossom, label=$L_6$] (14) at (-1.5, 1.25) {};
		\node [style=blossom, label=$R_1$] (15) at (-1, 1.25) {};
	\end{pgfonlayer}
	\begin{pgfonlayer}{edgelayer}
		\draw [gem](1) to (0);
		\draw [branch] (4) to (1);
		\draw [branch] (2) to (1);
		\draw [branch] (5) to (1);
		\draw [branch] (3) to (2);
		\draw [branch] (6) to (5);
		\draw [gem](8) to (2);
		\draw [gem](9) to (2);
		\draw [gem](10) to (3);
		\draw [gem](7) to (1);
		\draw [gem](11) to (4);
		\draw [gem](12) to (5);
		\draw [gem](5) to (13);
		\draw [gem](14) to (6);
		\draw [gem](6) to (15);
		\draw [thick, gem] (3) to (1);
		\draw [thick, gem] (4) to (5);
	\end{pgfonlayer}
\end{tikzpicture}
\begin{tikzpicture}[vertex/.style={circle, inner sep=2pt, fill=black}, rn/.style={rectangle, inner sep=3pt, fill=red}, branch/.style={very thick, brown},gem/.style={green!70!black}]
	\begin{pgfonlayer}{nodelayer}
		\node [style=vertex] (0) at (-3, -1) {};
		\node [style=vertex] (1) at (-4.25, -0) {};
		\node [style=vertex] (2) at (-4.25, 1) {};
		\node [style=vertex] (3) at (-3, -0) {};
		\node [style=vertex] (4) at (-2, -0) {};
		\node [style=vertex] (5) at (-1.25, 0.75) {};
		\node [style=rn, label=above:$L$] (6) at (-3, 2.5) {};
		\node [style=rn, label=left:$R$] (7) at (-3, -2.25) {};
	\end{pgfonlayer}
	\begin{pgfonlayer}{edgelayer}
		\draw [branch] (3) to (0) (1) to (0) (4) to (0) (2) to (1) (5) to (4);
		\draw [gem, thick](2) to (0);
		\draw [gem, thick](3) to (4);
		\draw [orange]  (1) to (7)  (0) to (7) (4) to (7)  (5) to (7);
		\draw [blue, bend left=75, looseness=1.25] (1) to (6);
		\draw [blue]  (2) to (6);
		\draw [blue, bend left, looseness=0.75] (0) to (6);
		\draw [blue] (3) to (6)  (6) to (4) (6) to (5);
		\draw [bend left=75, looseness=1.75, very thick, red, ->] (6) to (7);
	\end{pgfonlayer}
\end{tikzpicture}\caption{\label{fig:PS}On the left, a blossoming tree $t\in \BT_6$. In the middle, two closure operations have been performed on $t$ to create two new triangular faces. No other closure operations are possible and the remaining 10 blossoms have been labelled $L_1$ through $L_6$ and $R_1$ through $R_4$. Finally, on the right, the triangulation $\Xi(t,1)\in\STr_6$. Its root edge is marked in red; the thicker brown edges are those involving non-blossoms in the original tree. The two green edges are the ones obtained from the two initial closures and the blue and orange edges are obtained from the remaining blossoms during the last steps of the construction.}
\end{figure}
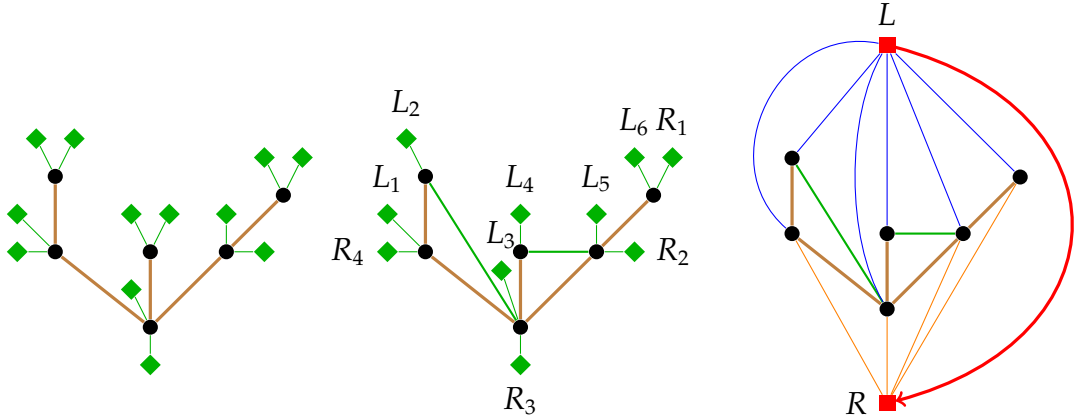

Poulalhon and Schaeffer provide in \cite{PS06} an explicit bijection between the set $\STr_n$ and a special subset of $\BT_n$ containing \emph{balanced blossoming trees}; for our purposes, a natural $2n$-to-$1$ map from $\BT_n\times\{\pm1\}\to\STr_n$, is actually enough. We briefly remind the reader of the construction of a triangulation in $	\STr_n$ from a blossoming tree in $\BT_n$ and a spin $\epsilon=\pm1$, highlighting those features that we will actually need.

\begin{construction}[Poulalhon, Schaeffer]\label{PS construction}
Given $t\in\BT_n$ and $\epsilon=\pm1$, the construction of $\Xi(t,\epsilon)\in\STr_n$ goes as follows:
\begin{itemize}
\item First, perform a sequence of \emph{closure} operations, each of which creates one new face. Consider the cyclic counterclockwise contour of the infinite face in the plane (which is initially the only face); if there is a corner $c$ of a blossom that is immediately followed by corners $c_1, c_2, c_3$ adjacent to non-blossoms, we say that $c$ is \emph{closable}. One performs the closure of $c$ by bringing its blossom to the vertex of $c_3$ as in Figure~\ref{fig:closure}, thereby creating a new triangular face. After a closure is performed, re-compute the contour of the infinite face and keep performing closures until no more blossom corners are closeable. Note that the order in which closures are performed is not important.
\item When no more closures are possible, the resulting map has a number of triangular faces, and one infinite face that is not necessarily triangular. It can be shown that the boundary of the infinite face $f_\infty$ is a cycle of non-blossoms, each of which is attached to one or two blossoms within $f_{\infty}$ that have not been closed, and that there are exactly two non-blossoms that each have two blossoms attached to them (Figure~\ref{fig:PS}, middle). As in the figure, label the blossoms in $f_\infty$ as $L_1,\ldots,L_k, R_1,\ldots,R_h$, clockwise around $f_\infty$, in such a way that $L_1,R_h$ and $L_k,R_1$ are attached to the same non-blossom, and additionally that, if you consider the clockwise contour of the original blossoming tree started at the root corner, the corner of $R_1$ occurs before the corner of $L_1$.
\item Draw two new vertices $L$ and $R$ within $f_\infty$. Identify $L_1,\ldots,L_k$ with $L$ and $R_1,\ldots,R_h$ with $R$.
\item Draw an edge between $L$ and $R$, which will be the root edge of the triangulation $\Xi(t,\epsilon)$. If $\epsilon=1$, orient it from $L$ to $R$; if $\epsilon=-1$, orient it from $R$ to $L$.
\end{itemize}	
\end{construction}

Note that the root blossom serves no purpose in the construction described above (other than determining how $\epsilon$ pairs with the orientation of the root edge), and is simply forgotten, which is why $\Xi$ is a $2n$-to-one map (since $t\in\BT_n$ has $2n$ blossoms, each of which can potentially serve as the origin).

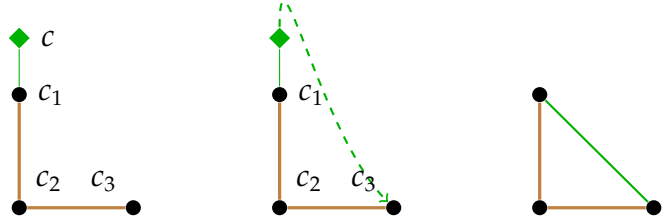
\begin{figure}
\centering
\begin{tikzpicture}[vertex/.style={circle, inner sep=2pt, fill=black}, blossom/.style={diamond, inner sep=2pt, fill=green!70!black}, branch/.style={very thick, brown},gem/.style={green!70!black}, scale=1.5]
		\begin{pgfonlayer}{nodelayer}
		\node [style=vertex, label=above right:$c_2$] (1) at (-3, -1) {};
		\node [style=vertex, label=right:$c_1$] (4) at (-3, -0) {};
		\node [style=vertex, label=above left:$c_3$] (5) at (-2, -1) {};
		\node [style=blossom, label=right:$c$] (11) at (-3, 0.5) {};
	\end{pgfonlayer}
	\begin{pgfonlayer}{edgelayer}
		\draw [branch] (4) to (1);
		\draw [branch] (5) to (1);
		\draw[gem] (11) to (4);
	\clip[use as bounding box] (-3.5,-1.5) rectangle (-1.5,1);
	\end{pgfonlayer}
\end{tikzpicture}\quad
\begin{tikzpicture}[vertex/.style={circle, inner sep=2pt, fill=black}, blossom/.style={diamond, inner sep=2pt, fill=green!70!black}, branch/.style={very thick, brown},gem/.style={green!70!black}, scale=1.5]
		\begin{pgfonlayer}{nodelayer}
		\node [style=vertex, label=above right:$c_2$] (1) at (-3, -1) {};
		\node [style=vertex, label=right:$c_1$] (4) at (-3, -0) {};
		\node [style=vertex, label=above left:$c_3$] (5) at (-2, -1) {};
		\node [style=blossom] (11) at (-3, 0.5) {};
	\end{pgfonlayer}
	\begin{pgfonlayer}{edgelayer}
		\draw [branch] (4) to (1);
		\draw [branch] (5) to (1);
		\draw[gem] (11) to (4);
		\draw[gem, looseness=1.2, out=90, dashed, thick, ->] (11) to (5);
		\clip[use as bounding box] (-3.5,-1.5) rectangle (-1.5,1);
		\end{pgfonlayer}
\end{tikzpicture}\quad
\begin{tikzpicture}[vertex/.style={circle, inner sep=2pt, fill=black}, blossom/.style={diamond, inner sep=2pt, fill=green!70!black}, branch/.style={very thick, brown},gem/.style={green!70!black}, scale=1.5]
		\begin{pgfonlayer}{nodelayer}
		\node [style=vertex] (1) at (-3, -1) {};
		\node [style=vertex] (4) at (-3, -0) {};
		\node [style=vertex] (5) at (-2, -1) {};	\end{pgfonlayer}
	\begin{pgfonlayer}{edgelayer}
		\draw [branch] (4) to (1);
		\draw [branch] (5) to (1);
		\draw[gem, thick] (4) to (5);
		\clip[use as bounding box] (-3.5,-1.5) rectangle (-1.5,1);
		\end{pgfonlayer}
\end{tikzpicture}
\caption{\label{fig:closure}A closure operation performed within a blossoming tree. In the counterclockwise contour of the infinite face, after the corner $c$ of a blossom, we encounter three corners in a row belonging to non-blossoms, namely, $c_1, c_2, c_3$. We bring the blossom to $c_3$, creating a new triangular face.}	
\end{figure}

\begin{proposition} The map $\Xi:\BT_n\times\{\pm1\}\to\STr_n$ of Construction~\ref{PS construction} is $2n$-to-one. Non-blossom vertices of $t\in\BT_n$ naturally correspond to vertices of $\Xi(t,\epsilon)$ that are \emph{not} endpoints of the root edge. Edges between non-blossom vertices of $t$ form a spanning tree of the map obtained from $\Xi(t,\epsilon)$ by erasing the endpoints of its root edge (and any edges involving them). Blossoms of $t$ correspond to edges of $\Xi(t)$ other than the root edge and the edges of the spanning tree just mentioned. The value of $\epsilon$ corresponds to the orientation of the root edge of $\Xi(t,\epsilon)$.
\end{proposition}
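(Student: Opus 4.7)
The plan is to derive this proposition from the original Poulalhon--Schaeffer bijection of \cite{PS06}, supplemented by a re-rooting argument that accounts for the $2n$-to-1 factor.

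First, I would confirm that Construction~\ref{PS construction} is well-defined. This requires three ingredients, all of which are standard in the PS analysis: (i) the closure process is confluent, so the final map does not depend on the order in which closures are performed; (ii) once no more closures are possible, the boundary of the infinite face $f_\infty$ has the structure described in the construction, namely a cycle of non-blossoms carrying the residual unclosed blossoms, with exactly two ``special'' non-blossoms each carrying two such blossoms; (iii) after identifying the residual blossoms with the new vertices $L$ and $R$, the resulting map has neither loops nor multiple edges, hence lies in $\STr_n$.

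Next, I would prove the $2n$-to-1 property. By \cite{PS06}, there is a bijection between a subset $\BT_n^{\mathrm{bal}} \subseteq \BT_n$ of \emph{balanced} blossoming trees and $\STr_n$; call this map $\Xi_{\mathrm{PS}}$. The key observation is that the root corner of $t \in \BT_n$ enters Construction~\ref{PS construction} only through the labeling step that distinguishes $L$ from $R$ (and hence the orientation of the root edge): the set of closable corners, and the underlying unrooted map obtained after all closures and identifications, do not depend on which blossom is chosen as the origin. Hence all $2n$ rootings of a given unrooted blossoming tree produce the same underlying triangulation with the same root edge, and the assignment of orientation depends jointly on the root blossom and $\epsilon$. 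Given $T \in \STr_n$, its preimages under $\Xi$ are therefore obtained by taking $t^* = \Xi_{\mathrm{PS}}^{-1}(T)$, re-rooting at each of the $2n$ blossoms of its underlying unrooted tree, and pairing each re-rooting with the unique $\epsilon \in \{\pm 1\}$ producing the correct orientation; this gives exactly $2n$ preimages, and conversely any preimage must arise this way (since its unrooted data recovers $t^*$ via PS injectivity, and its root+spin recover the orientation of $T$).

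Finally, the stated correspondences read off directly from the construction: non-blossoms are never moved or merged, so they correspond bijectively to the vertices of $\Xi(t, \epsilon)$ outside $\{L, R\}$, and the subgraph of $t$ they induce (being a subtree of $t$ containing every non-blossom) persists as a spanning tree of the map obtained from $\Xi(t, \epsilon)$ by deleting $L$ and $R$ together with their incident edges. Each of the $2n$ blossoms of $t$ contributes exactly one edge of $\Xi(t, \epsilon)$—either a closure edge or an edge joining a non-blossom to $L$ or $R$—and together with the root edge $LR$ and the $n-1$ tree edges this recovers all $3n$ edges of the simple triangulation, as the Euler count requires. The main obstacle is the structural claim (ii) above, on which the entire construction hinges; this is precisely the technical core of \cite{PS06}, and I would invoke it rather than reprove it.
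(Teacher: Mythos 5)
Your proposal is correct and matches the paper's own treatment: the paper states this proposition without proof, presenting it as a summary of the Poulalhon--Schaeffer results of \cite{PS06} accompanied only by the remark that the root blossom influences nothing but the $L$/$R$ labelling, so that the $2n$ rootings paired with the appropriate $\epsilon$ account for the $2n$-to-one count. Your write-up fills in exactly that outline -- deferring confluence of closures, the structure of the outer face, and injectivity to \cite{PS06}, and reading off the vertex/edge correspondences together with the $2n+(n-1)+1=3n$ edge count directly from the construction -- so there is nothing to object to.
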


\section{Growing $2p$-angulations}\label{Growing $2p$-angulations}

Our main concern in this article is constructing a uniform random map of size $n+1$ in a certain class from a uniform random map of size $n$ in the same class via a simple local (random) manipulation. What kind of manipulation we allow ourselves to make to ``grow'' our uniform maps is, to some extent, an arbitrary choice; in our case, we shall grow maps by adding faces, one at a time. We will presently give a more formal description of what we mean by adding a face: the fact that this is a very simple, natural, local operation will hopefully be apparent; as explained in the introduction, our motivation for considering this operation in particular comes from the potential applications to mixing time questions, which we shall explore in a subsequent paper.

Note that some previous work in the same direction does exist, but differs from ours in a few respects. Bettinelli \cite{Bet14} studies bijections that yield a procedure to grow uniform random quadrangulations; however, his allowed growth operations are more general than ours, as they involve ``adding a face'' but also performing certain types of surgery on the map, cutting it along a geodesic and re-gluing it in a different way. These surgeries make Bettinelli's growth operations non-local, which makes them ill suited to certain applications, such as the one of \cite{Car20}, where a different (but still explicit) growth scheme for quadrangulations was provided.
\medskip
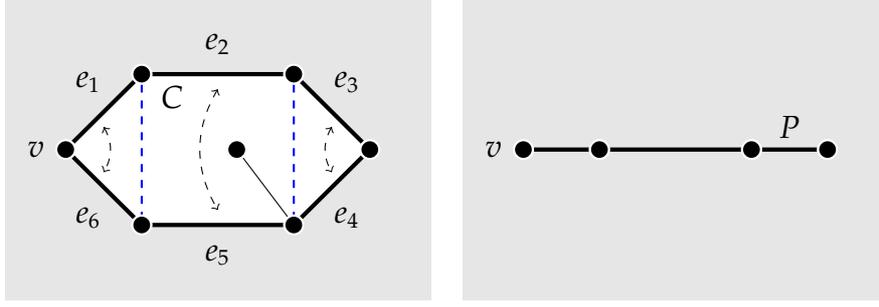
\begin{figure}
\centering
\begin{tikzpicture}[vertex/.style={fill=black, draw=white, thick, circle, inner sep=2.5pt}]
	\begin{pgfonlayer}{nodelayer}
		\node [style=vertex, label=left:$v$] (0) at (-2, -0) {};
		\node [style=vertex] (1) at (-1, 1) {};
		\node [style=vertex] (2) at (1, 1) {};
		\node [style=vertex] (3) at (2, -0) {};
		\node [style=vertex] (4) at (-1, -1) {};
		\node [style=vertex] (5) at (1, -1) {};
		\node [style=vertex] (6) at (0.25, -0) {};
		\node at (-1.7,0.9) {$e_1$};
		\node at (-1.7,-0.9) {$e_6$};
		\node at (0,1.4) {$e_2$};
		\node at (0,-1.4) {$e_5$};
		\node at (1.7,0.9) {$e_3$};
		\node at (1.7,-0.9) {$e_4$};
		\node at (-0.6,0.7) {$C$};
	\end{pgfonlayer}
	\begin{pgfonlayer}{edgelayer}
	\fill [gray!20] (-2.8,-2) rectangle (2.8,2);
	\fill [white] (0.center) to (4.center) to (5.center) to (3.center) to (2.center) to (1.center) to (0.center);
		\draw[ultra thick] (0) to (4);
		\draw[ultra thick] (4) to (5);
		\draw[ultra thick] (5) to (3);
		\draw[ultra thick] (3) to (2);
		\draw[ultra thick] (0) to (1);
		\draw[ultra thick] (1) to (2);
		\draw (6) to (5);
		\draw[<->, dashed, bend left] (-1.5,0.3) to (-1.5,-0.3);
		\draw[<->, dashed, bend right] (1.5,0.3) to (1.5,-0.3);
		\draw[<->, dashed, bend right] (0,0.8) to (0,-0.8);
		\draw[dashed, blue, thick] (1)--(4) (2)--(5);
	\end{pgfonlayer}
\end{tikzpicture}\quad
\begin{tikzpicture}[vertex/.style={fill=black, draw=white, thick, circle, inner sep=2.5pt}]
	\begin{pgfonlayer}{nodelayer}
		\node [style=vertex, label=left:$v$] (0) at (-2, -0) {};
		\node [style=vertex] (1) at (-1, 0) {};
		\node [style=vertex] (2) at (1, 0) {};
		\node [style=vertex] (3) at (2, -0) {};
		\node at (1.5,0.3) {$P$};
	\end{pgfonlayer}
	\begin{pgfonlayer}{edgelayer}
	\fill [gray!20] (-2.8,-2) rectangle (2.8,2);
		\draw[ultra thick] (0) to (1) to (2) to (3);
	\end{pgfonlayer}
\end{tikzpicture}
\caption{\label{fig:face collapse}Collapsing a face with outer boundary $C$ of length $4$ in an 8-angulation $M$.}	
\end{figure}

Given a $2p$-angulation $M\in\M^{2p}_{n+1}$, let $C$ be a simple cycle of $M$ that encloses exactly one face and let $v$ be a vertex along the cycle $C$. Since all $2p$-angulations are bipartite, the cycle $C$ has even length; since it encloses exactly one face, its length is $2k\leq 2p$. Thus, we can express $C$ as a sequence of edges $e_1,\ldots,e_{2k}$ such that $v$ is the common endpoint of $e_1$ and $e_{2k}$. Define the $2p$-angulation $\coll(M,C,v)\in\M^{2p}_{n}$ as the one obtained by erasing all vertices and edges in the interior of $C$ and pairwise identifying the edges of $C$: $e_1$ with $e_{2k}$, $e_2$ with $e_{2k-1}$, and so on, in such a way as to turn the cycle into a path of length $P$ started at $v$ (Figure~\ref{fig:face collapse}). We say that $\coll(M,C,v)$ is obtained form $M$ \emph{by collapsing a face} (in this expression, we are referring to the face enclosed by $C$). Vice-versa, we say that $M$ can be obtained by $\coll(M,C,v)$ by \emph{growing a face}; more specifically, by \emph{growing a face at $P$}.

Note that not all faces can be collapsed and that, given a face that can be, it can be collapsed in $k$ ways, where $2k$ is the length of its outer boundary, depending on the choice of the vertex $v$ (choosing the vertex opposite $v$ in the cycle $C$ yields the same result). Given a $2p$-angulation $m\in\M^{2p}_{n}$ and a simple path $P$, there is a unique way to grow a face at $P$ if the length of the path is $p$ and it does not contain the root edge. If $P$ contains the root edge, we might grow a face at $P$ to the left or to the right of the root edge; additionally, if the length of the path is less than $k$, we get a choice of how to arrange edges and vertices in the interior of the newly grown face.

The notation introduced here is slightly different than the one used in \cite{Car20} in the case of quadrangulations, where we allowed the collapse of faces whose outer boundary has other faces in its interior (see Figure~10 of \cite{Car20}). Such operations were not actually used in the growth scheme we constructed, and they are markedly easier to deal with in the case $p=2$, so we shall avoid them altogether. 

What we wish to do is show that it is possible to grow uniform quadrangulation by the successive growing of faces at random locations $P$. To this end, we define the notion of a growth scheme:
\begin{definition}\label{def:growth scheme for 2p-angulations}
A \emph{growth scheme} for $2p$-angulations is a collection of functions $f_n:\M^{2p}_{n+1}\times\M^{2p}_{n}\to[0,1]$, for $n\geq 1$, such that
\begin{itemize}
\item[i)] if $f_n(M,m)>0$, then $m=\coll(M,C,v)$ for some $C,v$.
\item[ii)] for all $M\in \M^{2p}_{n+1}$, $\sum_{m\in \M^{2p}_{n}}f_n(M,m)=1$;	
\item[iii)] for all $m\in \M^{2p}_{n}$, $\sum_{M\in \M^{2p}_{n+1}}f_n(M,m)=\frac{|\M^{2p}_{n+1}|}{|\M^{2p}_{n}|}$.
\end{itemize}

Note that a growth scheme allows us to couple a uniform $2p$-angulation with $n$ faces $U_n$ with a uniform $2p$-angulation with $n+1$ faces $U_{n+1}$ in such a way that $U_n=\coll(U_{n+1},C,v)$, where $C$ is a random simple cycle of $U_{n+1}$ enclosing a single face and $v$ a random vertex of $U_{n+1}$ (whose distributions conditional on $U_{n+1}$ can be determined from the function $f_n$). We can define a random variable $X$ taking values in $\M_{n+1}^{2p}$ on the same probability space as $U_n$, as follows. Conditionally on $U_n=m$, set $X=M$ with probability proportional to $f_n(M,m)$. Property i) ensures that $U_n$ is of the form $\coll(X,C,v)$, and properties ii) and iii) that the distribution of $X$ is the same as the distribution of $U_{n+1}$.

In \cite{Car20}, a growth scheme was provided for quadrangulations, i.e.~for $p=2$; note that the probabilities $f_n(\cdot, \cdot)$ were computed explicitly in terms of the structure of the tree underlying the quadrangulation; this was thanks to the fact that growth schemes for plane trees are easy to determine explicitly. In this paper, we will show the existence of a growth scheme for $2p$-angulations, but not give explicit expressions for the functions $f_n(\cdot, \cdot)$.

The rest of this section will be devoted to proving Theorem~\ref{2p-angulations growth}.

This can be done by using the fact, proven by Luczak and Winkler in \cite{LW04}, that there exists a growth scheme for $d$-ary trees, by which we mean a sequence of functions $g_n:\T^d_{n+1}\times\CT^d_{n}\to[0,1]$ satisfying properties ii) and iii) from Definition~\ref{def:growth scheme for 2p-angulations}, where occurrences of $\M^{2p}_{k}$ are replaced by $\CT^{d}_{k}$, and the property i') that, if $g_n(T,t)>0$, then $T=\grow(t,v)$ for some leaf $v$ of $t$. For ease of reference, we include here the statement of Theorem 4.1 of \cite{LW04}:

\begin{theorem}[Luczak, Winkler]\label{d-ary trees growth}For each $d\geq 1$, there exists a growth scheme $(g_n)_{n\geq 1}$ for complete $d$-ary trees.	
\end{theorem}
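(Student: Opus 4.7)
The plan is to recast the existence of a growth scheme as a transportation problem on a bipartite graph and then verify a Hall-type expansion inequality. Let $B_n$ denote the bipartite graph with parts $\CT^d_n$ and $\CT^d_{n+1}$, placing an edge between $t$ and $T$ whenever $T=\grow(t,v)$ for some leaf $v$ of $t$ (equivalently, whenever $t$ is obtained from $T$ by collapsing a \emph{near-leaf}, meaning an internal vertex all of whose children are leaves). Conditions (i'), (ii), (iii) on $g_n$ are then equivalent to the existence of a non-negative weighting of the edges of $B_n$ whose $T$-row sums all equal $1$ and whose $t$-column sums all equal $r_n := |\CT^d_{n+1}|/|\CT^d_n|$. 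Since $|\CT^d_{n+1}| = r_n|\CT^d_n|$, the total supply and demand are balanced, so this is a genuine transportation LP.

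By the max-flow min-cut theorem, or equivalently the defect form of Hall's theorem for fractional matchings with unequal demands, such a weighting exists if and only if
$$|\grow(A)| \;\geq\; r_n\,|A| \qquad \text{for every } A \subseteq \CT^d_n,$$
where $\grow(A) := \bigcup_{t \in A}\{\grow(t,v) : v \text{ a leaf of } t\}$. Note that the global edge count $|\CT^d_n|\bigl((d-1)n+1\bigr) = \sum_{T \in \CT^d_{n+1}} K(T)$, where $K(T)$ is the number of near-leaves of $T$, yields an ``average'' form of this inequality for free; the work is to upgrade it to hold set-by-set.

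To prove the expansion inequality, I would proceed by induction on $n$, exploiting the recursive decomposition of a complete $d$-ary tree $t \in \CT^d_n$ into the ordered $d$-tuple of subtrees $(t_1,\ldots,t_d)$ rooted at the children of the origin, with $t_i \in \CT^d_{n_i}$ and $n_1+\cdots+n_d = n-1$ (as in Remark~\ref{rem:mobile to tree}). Growing $t$ at a leaf amounts to growing exactly one coordinate $t_i$ at a leaf of $t_i$; this lets one stratify $A$ by the size profile $(n_1,\ldots,n_d)$ and apply the inductive expansion bound to each coordinate. The Fuss-Catalan identity $|\CT^d_n|=\tfrac{1}{(d-1)n+1}\binom{dn}{n}$ together with the convolution $|\CT^d_n| = \sum_{n_1+\cdots+n_d = n-1}\prod_i|\CT^d_{n_i}|$ should allow the coordinate-wise inductive bounds to assemble into the global inequality.

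The hard part will be this coordinate-wise induction, which is delicate because the ratio $r_n$ does not factor cleanly across the decomposition, and the extremal sets $A$ are precisely those concentrated on profiles with almost all the mass in one coordinate. To handle this, one may need either to strengthen the inductive hypothesis to a weighted, profile-aware expansion statement, or to circumvent the induction by building $g_n$ through an explicit randomised construction: sample $T$ uniformly in $\CT^d_{n+1}$, then collapse one of its near-leaves with a carefully tuned, shape-dependent probability, and verify that the resulting $t$ is uniform in $\CT^d_n$. This latter strategy is essentially the one implemented by Luczak and Winkler in \cite{LW04}.
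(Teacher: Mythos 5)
The first thing to say is that the paper does not prove this statement at all: it is quoted verbatim as Theorem~4.1 of \cite{LW04} and used as a black box, so there is no in-paper argument to compare yours against. Judged as a standalone proof, your proposal has a genuine gap. The reduction to a balanced transportation problem on the bipartite graph $B_n$ and then, via max-flow min-cut, to the expansion inequality $|\grow(A)|\geq r_n|A|$ for every $A\subseteq\CT^d_n$ is correct and standard, and the observation that the averaged version of the inequality follows from counting near-leaves is fine. But that reduction is the routine part; the entire mathematical content of the theorem is the verification of the set-by-set inequality, and you do not carry it out. You correctly diagnose where the difficulty sits (the ratio $r_n$ does not factor over the decomposition $(t_1,\ldots,t_d)$, and the extremal sets concentrate mass in one coordinate), but you then offer only two unexecuted escape routes: a ``strengthened, profile-aware'' inductive hypothesis that you never formulate, and a randomised construction --- ``collapse a near-leaf of a uniform $T$ with a carefully tuned, shape-dependent probability so that the result is uniform'' --- which is not a construction but a restatement of what a growth scheme is. Ending with ``this is essentially what Luczak and Winkler do'' turns the proof into a citation of the very result being proved.

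If you want to complete this along your own lines, the missing piece is a proof of the expansion inequality, and the naive per-coordinate induction will indeed fail for the reason you flag; one needs either a genuinely stronger inductive statement or a different organisation of the feasibility check (for instance, exhibiting an explicit feasible flow, or restricting attention to a structured family of cuts and proving those are the minimal ones, in the spirit of what this paper does for its network $G_n$ in Lemma~\ref{lemma:h function}). Alternatively, the honest route for the purposes of this paper is simply to cite \cite{LW04}, as the authors do, since their proof of the $d$-ary case is itself a substantial induction and not something recovered by the Hall-condition framing alone.
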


The ways in which our notation differs from the one of \cite{LW04} are very minor: our explicit definition of a growth scheme, which we have adopted in order to maintain the same notation as our previous works \cite{Car20, CS19}, is equivalent to their notion of a \emph{building scheme}; as per Remark~\ref{rem:completeness of trees}, our use of the complete $d$-ary trees of Definition~\ref{def:complete d-ary tree} and their ``growth'' operation does entirely correspond to the subtrees of $\mathbb{T}^d$ and their containment relation as defined in \cite{LW04}.

A first step in order to obtain Theorem~\ref{2p-angulations growth} from Theorem~\ref{d-ary trees growth} is to recast Theorem~\ref{d-ary trees growth} in terms of $p$-mobiles. Thanks to Remark~\ref{rem:mobile to tree} and the bijection $\Phi$, this is rather immediate:

\begin{cor}\label{p-mobiles growth}
Given a growth scheme $(g_n)_{n\geq 1}$ for $p$-ary trees, the functions $(\tau,\tau')\mapsto g_n(\Phi(\tau),\Phi(\tau'))$ yield a growth scheme for unlabelled $p$-mobiles.	
\end{cor}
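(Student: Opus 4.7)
The plan is to verify the three defining properties of a growth scheme for unlabelled $p$-mobiles --- namely the direct analogues of (i), (ii), (iii) from Definition~\ref{def:growth scheme for 2p-angulations} --- for the functions $h_n(\tau,\tau'):=g_n(\Phi(\tau),\Phi(\tau'))$. Properties (ii) and (iii) are essentially formal consequences of $\Phi$ being a size-preserving bijection (Remark~\ref{rem:mobile to tree} and equation~(\ref{CT and Mob cardinality})): substituting $\Phi(\tau')\leftrightarrow\tau'$ (resp.\ $\Phi(\tau)\leftrightarrow\tau$) converts the mobile-side sums into the corresponding sums for $g_n$, which by hypothesis equal $1$ and $|\CT^p_{n+1}|/|\CT^p_n|=|\Mob^p_{n+1}|/|\Mob^p_n|$ respectively.

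The substantive ingredient is property (i'), i.e.\ showing that $h_n(\tau,\tau')>0$ forces $\tau=\grow(\tau',c)$ for some white corner $c$ of $\tau'$. From the analogous property for $g_n$ we already have $\Phi(\tau)=\grow(\Phi(\tau'),v)$ for some leaf $v$ of $\Phi(\tau')$, so (i') reduces, via injectivity of $\Phi$, to the following compatibility lemma:
\begin{quote}
\emph{For every $\tau'\in\Mob^p_n$ and every leaf $v$ of $\Phi(\tau')$, there exists a white corner $c$ of $\tau'$ such that $\Phi(\grow(\tau',c))=\grow(\Phi(\tau'),v)$.}
\end{quote}

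I would prove this lemma by induction on $n$, using the recursive construction of $\Phi$ recalled in Remark~\ref{rem:mobile to tree}. Writing $\tau'=(\tau_1',\ldots,\tau_p')$ for the decomposition of $\tau'$ around the rightmost black vertex $v^*$ at height $1$, a leaf of $\Phi(\tau')$ is either (a) a ``top'' leaf, namely the $i$-th child of the origin of $\Phi(\tau')$ for some index $i$ with $\tau_i'=\tau_0$, or (b) a leaf lying inside some $\Phi(\tau_i')$ with $\tau_i'\neq\tau_0$. In case (b), the inductive hypothesis supplies a corner $c$ of $\tau_i'$ with $\Phi(\grow(\tau_i',c))=\grow(\Phi(\tau_i'),v)$; reading $c$ as a corner of $\tau'$, growing at $c$ leaves $v^*$ still the rightmost black at height $1$ and only modifies the $i$-th component of the decomposition, so the recursion immediately yields $\Phi(\grow(\tau',c))=\grow(\Phi(\tau'),v)$. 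In case (a), the position $i$ with $\tau_i'=\tau_0$ corresponds to a white leaf in $\tau'$ (a leaf child of $v^*$ when $i\geq 2$, or the origin itself when $i=1$); growing $\tau'$ at the unique corner of this leaf grafts a new size-$1$ $p$-mobile, replacing $\tau_0$ by the size-$1$ mobile in position $i$ of the decomposition, which is precisely the mobile-side counterpart of growing $\Phi(\tau')$ at the corresponding top leaf.

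The main obstacle I foresee is the careful bookkeeping of planar conventions, especially in case (a) with $i=1$ and $\tau_1'=\tau_0$: one must verify that growing $\tau'$ at the unique corner of its (then degree-$1$) origin does yield a mobile whose inductive decomposition places the newly created size-$1$ mobile in the first coordinate, rather than having the new black vertex overtake $v^*$ as the rightmost at height $1$. Similar attention to left/right orderings at the origin is needed in case (b) when $c$ lies at the origin of $\tau_1'$. The base case $n=0$ is immediate, since $\tau_0$ and $\Phi(\tau_0)$ each admit a unique growth operation producing the unique size-$1$ object on their respective sides.
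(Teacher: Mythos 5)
Your proof follows the same overall route as the paper's: transport the Luczak--Winkler scheme through $\Phi$, with properties (ii) and (iii) falling out formally from bijectivity and the identity \eqref{CT and Mob cardinality}, and the only substantive point being the compatibility of the two growth operations. The difference is that the paper disposes of that point in one sentence, asserting that $\tau=\grow(\tau',c)$ for some corner $c$ \emph{if and only if} $\Phi(\tau)=\grow(\Phi(\tau'),v)$ for some leaf $v$, whereas you isolate the single implication actually needed for property (i') --- every leaf-growth of $\Phi(\tau')$ is realised as $\Phi(\grow(\tau',c))$ for some white corner $c$ --- and prove it by induction on the recursive decomposition of Remark~\ref{rem:mobile to tree}. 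This extra care is warranted: a mobile $\tau'\in\Mob^p_n$ has $np$ white corners while $\Phi(\tau')$ has only $(p-1)n+1$ leaves, and already for small examples one finds strictly more than $(p-1)n+1$ distinct mobiles of the form $\grow(\tau',c)$, so the ``only if'' half of the paper's one-line justification fails in general; the surjectivity statement you formulate is exactly the correct (and sufficient) one. Your induction is sound, and the convention issues you flag --- growth at the root corner of the origin when $\tau_1'$ is degenerate, and the two corners of $\tau'$ at the origin of a sub-mobile $\tau_i'$ that merge into its root corner --- are real but benign: they are resolved by fixing the grafting convention so that a star grafted at the root corner becomes the \emph{leftmost} new black child, which keeps the rightmost height-one black vertex, and hence the decomposition, unchanged.
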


\begin{proof}
This follows immediately from the fact that $\Phi$ is a bijection and that $\tau$ is obtained by growing $\tau'$ at a corner $c$ if and only if $\Phi(\tau)$ is obtained by growing $\Phi(\tau')$ at a leaf.	
\end{proof}

What remains to do is to establish a link between growing $p$-mobiles and growing $2p$-angulations via the BDFG construction:
\begin{lemma}\label{lemma:growing mobile to growing map}
Let $\tau$ be a labelled $p$-mobile in $\LMob^p_n$, let $c$ be a corner of $\tau$ adjacent to a white vertex and let $s\in S_p$; let $\epsilon=\pm1$. There exists a simple cycle $K$ and a vertex $w$ of the pointed $2p$-angulation $\Psi(\grow(\tau,c),\epsilon)$ such that $\coll(\Psi(\grow(\tau,c,s),\epsilon),K,w)=\Psi(\tau,\epsilon)$, where $\Psi$ is the bijection from Construction~\ref{BDFG}.
\end{lemma}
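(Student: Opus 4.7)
The plan is to exhibit $K$ and $w$ explicitly: $K$ will be the boundary of the face of $M':=\Psi(\grow(\tau,c,s),\epsilon)$ corresponding (via the BDFG bijection) to the newly grafted black vertex $v'$, and $w$ will be the vertex of $M'$ corresponding to the white vertex $w_0$ of $c$. After exhibiting this pair, I would verify that collapsing $K$ at $w$ returns $M:=\Psi(\tau,\epsilon)$. Let $l_1,\ldots,l_{p-1}$ be the new white leaves attached to $v'$, and let $a$ be the BDFG-label of $w_0$. The grafting splits the corner $c$ at $w_0$ into two new corners $c^+,c^-$; the clockwise contour from $c^-$ in $\grow(\tau,c,s)$ avoids the star altogether, so the BDFG first-passage rule gives $t(c^-)=t(c)$, and hence $e(c^-)$ in $M'$ has the same endpoints as $e(c)$ in $M$.

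I would then show that the boundary of the face $F$ associated with $v'$ is a simple cycle $K$ of length $2p$ containing $w$, with $e(c^+)$ and $e(c^-)$ as the two edges of $K$ incident to $w$. Tracing the $2p$ boundary sides of $F$ through the corners $c^+,c^-,l_1,\ldots,l_{p-1}$ around $v'$ (together with any map edges incoming to those corners), one verifies that $K$ visits $w$, the $p-1$ new leaves, and $p$ further ``old'' vertices of $M$, giving $2p$ pairwise distinct vertices on $K$. Moreover, the $p-1$ new leaves all lie on the arc of $K$ starting from $w$ along $e(c^+)$ (which enters the star in the clockwise contour), while the complementary arc is populated exclusively by $p$ old vertices of $M$. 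Establishing this structural claim---in particular the simplicity of $K$ and the ``new-on-one-arc, old-on-the-other'' distribution---is the main combinatorial obstacle; it demands a case analysis of the first-passage times of the $\pm 1$-walk $(S_k)_{k=0}^{2p-1}$ encoded by $s$ against the BDFG-labels inherited from $\tau$.

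Granted this claim, the collapse $\coll(M',K,w)$ folds $K$ onto a simple path of length $p$ rooted at $w$: the adjacent pair $\{e(c^+),e(c^-)\}$ fuses into a single edge recovering $e(c)$ in $M$, and for each $i=1,\ldots,p-1$ the edge pair $\{e_i,e_{2p+1-i}\}$ identifies a new leaf on the first arc with its opposite old vertex on the second arc, erasing exactly the $p-1$ new vertices and $p$ new edges so as to match the vertex, edge and face counts of $M$. Outside $K$, the BDFG constructions applied to $\tau$ and $\grow(\tau,c,s)$ produce the same edges---any target changes induced by the new labels are absorbed in edges of $K$ that get paired off by the collapse---so the collapsed map coincides with $M$. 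The orientation datum $\epsilon$ is inherited because the root corner of the mobile is fixed by the grafting (or is canonically assigned to $c^-$ in the exceptional case where $c$ is the root corner), and under $\Psi$ this preserves the root edge together with its orientation.
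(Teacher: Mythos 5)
Your choice of $K$ (the boundary of the face $f_v$ associated with the new black vertex) and of the collapse vertex $w$ (the image of the white vertex carrying $c$) coincides with the paper's, but the structural claim on which your collapse argument rests is false, not merely left unproven. You assert that $K$ is a simple cycle of length exactly $2p$ on which $w$, all $p-1$ new leaves and $p$ old vertices appear, with the new leaves filling one arc and old vertices the other. In fact the outer boundary of a face of a $2p$-angulation need not be a $2p$-cycle: it is a simple cycle of some length $2k\le 2p$, the remaining $2p-2k$ sides of the face being contributed by pendant edges lying strictly inside the cycle and counted twice by the boundary walk. That is exactly what happens here: the outer boundary of $f_v$ is the union of two label-decreasing (leftmost-geodesic) paths $P_l$ and $P_r$ issued from the two corners of $w$ created by the grafting, both descending from the label $a$ of $w$ to a common vertex of label $a-k$; the path $P_l$ passes through only those new leaves that the first-passage rule actually selects, and the other new leaves hang inside the face on pendant edges. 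Figure~\ref{fig:grow leaf/face} ($p=4$) is a counterexample to your claim: there the outer boundary of $f_v$ is a $4$-cycle, only one of the three new leaves lies on it, and the collapse produces a path of length $2$, not $4$.

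Consequently your description of the collapse is also incorrect: the new leaves not on $K$ are erased as interior vertices (as the definition of $\coll$ prescribes) rather than being identified with ``opposite old vertices'', and the edge pairing glues $P_l$ onto $P_r$ edge by edge instead of folding a ``new'' arc onto an ``old'' one. The ingredients you would still need, and which the paper supplies, are: (a) $K$ encloses no face other than $f_v$, which holds because $v_2,\dots,v_p$ are leaves of the mobile, so no other black vertex lies inside $K$; and (b) the identification of the collapsed map with $\Psi(\tau,\epsilon)$, for which one checks that $P_r$ is built from exactly the same edges in both constructions, while every map edge that targeted a corner of $P_l$ in $\Psi(\grow(\tau,c,s),\epsilon)$ targets the corner with the same label on $P_r$ in $\Psi(\tau,\epsilon)$. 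Your remark that ``target changes are absorbed in edges of $K$'' gestures at (b), but it cannot be completed as stated because it presupposes the incorrect picture of $K$.
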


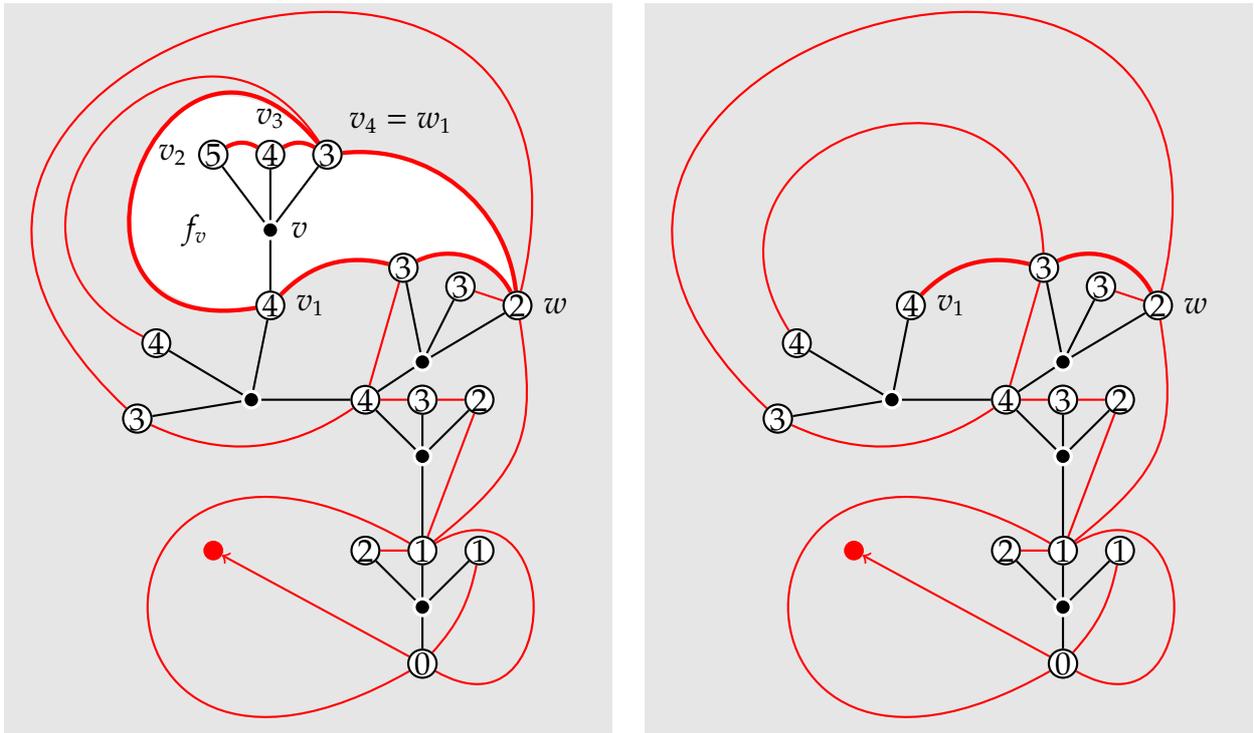
\begin{figure}\centering
\begin{tikzpicture}[vertex/.style={circle, fill=black, inner sep=2.2pt, draw=white, very thick}, gem/.style={circle, fill=white, inner sep=0pt, draw=black, thick}, simple/.style={ultra thick, red}, mobile/.style={thick}, map/.style={red, thick}, scale=1]
\clip[use as bounding box] (-3.5,-5.7) rectangle (4.5,4);
	\begin{pgfonlayer}{nodelayer}
		\node [style=vertex, label=right:$v$] (0) at (0, 1) {};
		\node [style=gem, label=right:$v_1$] (1) at (0, -0) {4};
		\node [style=gem, label=left:$v_2$] (2) at (-0.75, 2) {5};
		\node [style=gem, label=above:$v_3$] (3) at (0, 2) {4};
		\node [style=gem, label=above right:{$v_4=w_1$}] (4) at (0.75, 2) {3};
		\node [style=gem] (5) at (1.75, 0.5) {3};
		\node [style=gem, label=right:$w$] (6) at (3.25, -0) {2};
		\node at (-1,1) {$f_v$};
		\node [style=vertex] (7) at (-0.25, -1.25) {};
		\node [style=gem] (8) at (-1.5, -0.5) {4};
		\node [style=gem] (9) at (2, -1.25) {3};
		\node [style=vertex] (10) at (2, -0.75) {};
		\node [style=gem] (11) at (2.5, 0.25) {3};
		\node [style=gem] (12) at (1.25, -1.25) {4};
		\node [style=gem] (13) at (2.75, -1.25) {2};
		\node [style=vertex] (14) at (2, -2) {};
		\node [style=gem] (15) at (2, -3.25) {1};
		\node [style=gem] (16) at (-1.75, -1.5) {3};
		\node [style=vertex] (17) at (2, -4) {};
		\node [style=gem] (18) at (1.25, -3.25) {2};
		\node [style=gem] (19) at (2.75, -3.25) {1};
		\node [style=gem] (20) at (2, -4.75) {0};
		\node [style=vertex, red] (21) at (-0.75, -3.25) {};
	\end{pgfonlayer}
	\begin{pgfonlayer}{edgelayer}
	\fill[gray!20] (-3.5,-5.7) rectangle (4.5,4);
	\fill[white] (1.center) [bend left=120, looseness=4.2] to (4.center) [bend left=45, looseness=1.10] to (6.center) [bend right=45, looseness=1.20] to (5.center) [bend right, looseness=0.90] to (1.center);
		\draw [mobile] (2) to (0);
		\draw [mobile] (0) to (1);
		\draw [mobile] (3) to (0);
		\draw [mobile] (4) to (0);
		\draw [style=simple, bend left=120, looseness=3.50] (1) to (4);
		\draw [style=simple, bend left, looseness=1.00] (1) to (5);
		\draw [style=simple, bend left=45, looseness=1.00] (5) to (6);
		\draw [style=simple, bend left=45, looseness=1.00] (4) to (6);
		\draw [style=simple, bend left, looseness=1.00] (2) to (3);
		\draw [style=simple, bend left, looseness=1.00] (3) to (4);
		\draw [mobile] (8) to (7);
		\draw [mobile] (1) to (7);
		\draw [mobile] (7) to (12);
		\draw [mobile] (12) to (14);
		\draw [mobile] (9) to (14);
		\draw [mobile] (14) to (15);
		\draw [mobile] (13) to (14);
		\draw [mobile] (5) to (10);
		\draw [mobile] (11) to (10);
		\draw [mobile] (6) to (10);
		\draw [mobile] (16) to (7);
		\draw [style=map, bend left=105, looseness=2.25] (8) to (4);
		\draw [map] (12) to (9);
		\draw [map] (9) to (13);
		\draw [mobile] (12) to (10);
		\draw [map, bend left=120, looseness=3.25] (16) to (6);
		\draw [mobile] (18) to (17);
		\draw [mobile] (20) to (17);
		\draw [mobile] (15) to (17);
		\draw [mobile] (19) to (17);
		\draw[map, ->] (20) to (21);
		\draw [map] (18) to (15);
		\draw [map, bend right=120, looseness=8](15) to (20);
		\draw [map, bend left=15, looseness=1.00] (19) to (20);
		\draw [map] (13) to (15);
		\draw[map, bend left=120, looseness=3] (15) to (20);
		\draw [map, bend left, looseness=1.25] (6) to (15);
		\draw[map, bend left] (12) to (16);
		\draw[map] (11) to (6);
		\draw[map] (12) to (5);
	\end{pgfonlayer}
	\end{tikzpicture}\quad
	\begin{tikzpicture}[vertex/.style={circle, fill=black, inner sep=2.2pt, draw=white, very thick}, gem/.style={circle, fill=white, inner sep=0pt, draw=black, thick}, simple/.style={ultra thick, red}, mobile/.style={thick}, map/.style={red, thick}, scale=1]
	\clip[use as bounding box] (-3.5,-5.7) rectangle (4.5,4);
	\begin{pgfonlayer}{nodelayer}
		\node [style=gem, label=right:$v_1$] (1) at (0, -0) {4};
		\node [style=gem] (5) at (1.75, 0.5) {3};
		\node [style=gem, label=right:$w$] (6) at (3.25, -0) {2};
		\node [style=vertex] (7) at (-0.25, -1.25) {};
		\node [style=gem] (8) at (-1.5, -0.5) {4};
		\node [style=gem] (9) at (2, -1.25) {3};
		\node [style=vertex] (10) at (2, -0.75) {};
		\node [style=gem] (11) at (2.5, 0.25) {3};
		\node [style=gem] (12) at (1.25, -1.25) {4};
		\node [style=gem] (13) at (2.75, -1.25) {2};
		\node [style=vertex] (14) at (2, -2) {};
		\node [style=gem] (15) at (2, -3.25) {1};
		\node [style=gem] (16) at (-1.75, -1.5) {3};
		\node [style=vertex] (17) at (2, -4) {};
		\node [style=gem] (18) at (1.25, -3.25) {2};
		\node [style=gem] (19) at (2.75, -3.25) {1};
		\node [style=gem] (20) at (2, -4.75) {0};
		\node [style=vertex, red] (21) at (-0.75, -3.25) {};
	\end{pgfonlayer}
	\begin{pgfonlayer}{edgelayer}
	\fill[gray!20] (-3.5,-5.7) rectangle (4.5,4);
		\draw [style=simple, bend left, looseness=1.00] (1) to (5);
		\draw [style=simple, bend left=45, looseness=1.00] (5) to (6);
		\draw [mobile] (8) to (7);
		\draw [mobile] (1) to (7);
		\draw [mobile] (7) to (12);
		\draw [mobile] (12) to (14);
		\draw [mobile] (9) to (14);
		\draw [mobile] (14) to (15);
		\draw [mobile] (13) to (14);
		\draw [mobile] (5) to (10);
		\draw [mobile] (11) to (10);
		\draw [mobile] (6) to (10);
		\draw [mobile] (16) to (7);
		\draw [style=map, bend left=105, looseness=2.25] (8) to (5);
		\draw [map] (12) to (9);
		\draw [map] (9) to (13);
		\draw [mobile] (12) to (10);
		\draw [map, bend left=120, looseness=3.25] (16) to (6);
		\draw [mobile] (18) to (17);
		\draw [mobile] (20) to (17);
		\draw [mobile] (15) to (17);
		\draw [mobile] (19) to (17);
		\draw[map, ->] (20) to (21);
		\draw [map] (18) to (15);
		\draw [map, bend right=120, looseness=8](15) to (20);
		\draw [map, bend left=15, looseness=1.00] (19) to (20);
		\draw [map] (13) to (15);
		\draw[map, bend left=120, looseness=3] (15) to (20);
		\draw [map, bend left, looseness=1.25] (6) to (15);
		\draw[map, bend left] (12) to (16);
		\draw[map] (11) to (6);
		\draw[map] (12) to (5);
	\end{pgfonlayer}
	\end{tikzpicture}
	\caption{\label{fig:grow leaf/face}On the left, an 8-angulation $m=\Psi(\grow(\tau,c),\epsilon)$ for some $(\tau,\epsilon)\in\LMob_4^4\times\{\pm 1\}$ and a corner $c$ of $\tau$. On the right, the 8-angulation $\Psi(\tau,\epsilon)$, which is obtained from $m$ by collapsing the face $f_v$, whose outer boundary becomes the thick red path of length two from $v_1$ to $w$ in the picture.}	
\end{figure}

\begin{proof}
Let $v$ be the new black vertex in $\grow(\tau,c,s)$ and $v_1,\ldots,v_p$ be its neighbours, ordered clockwise starting with the parent of $v$.

In $\Psi(\grow(\tau,c,s),\epsilon)$, a map which we call $m$, the black vertex $v$ corresponds to a face $f_v$ whose external boundary is a simple cycle $C$.

We shall show that $C$ contains the vertex $v_1$, that it has only the face $f_v$ in its interior, and that the statement of the theorem is satisfied if we take $K=C$, $w=v_1$.

The cycle $C$ is obtained as follows in the context of the construction $\Psi$: let $a$ be the label of $v_1$ after the first step of Construction~\ref{BDFG}; start with the corner $c_l$ of $v_1$ immediately to the left of the edge $(v_1,v)$, and follow the map edge drawn towards the corner of the first vertex $w_1$ you meet in the contour that is labelled $a-1$; if $w_1\notin\{v_2,\ldots,v_p\}$, stop; otherwise, follow the map edge drawn from that corner to the next corner labelled $a-2$, and so on. This procedure yields a simple path $P_l$ of map edges which starts at $v_1$ and ends at a vertex $w\notin\{v_1,\ldots,v_p\}$, labelled $a-k$ for some $k>0$. Consider then the simple path $P_r$ obtained by following successive map edges drawn from the corner $c_r$ of $v$ immediately to the right of the edge $(v_1,v)$ until they hit $w$. The fact that $w$ will be hit is clear, as the corner of $w$ hit by the first path is the first one labelled $a-k$ coming after $c_r$. 

The cycle $C$ obtained by concatenating the two simple paths $P_l$ and $P_r$ does not enclose any black vertices other than $v$ because $v_2,\ldots,v_p$ are leaves of the mobile, so it is the external boundary of the face $f_v$. The $2p$-angulation $\coll(m,C,v_1)$ is obtained by collapsing the face $f_v$, gluing together the simple paths $P_l$ and $P_r$ starting from their common origin $v_1$.

The same $2p$-angulation is obtained as $\Psi(\tau,\epsilon)$, where the construction of path $P_r$ is unaffected and map edges that were hitting corners of $P_l$ in the previous construction now hit the corners with the same label hit by $P_r$ (Figure~\ref{fig:grow leaf/face}).
\end{proof}

\begin{remark}
Note that, given $2p$-angulations $m,M$ such that $M$ is obtained from $m$ by growing a face, it is not necessarily true that $\Psi^{-1}(M)$ can be obtained by growing $\Psi^{-1}(m)$, in the sense of grafting a $p$-mobile of size 1 somewhere. Since our construction of growth schemes for $2p$-angulations will rely on growth schemes for mobiles, what this entails is that there are some allowed growth operations on $2p$-angulations that will never be used.
\end{remark}

Finally, we have
\begin{proposition}\label{2p-angulation growth prop}
Suppose $(g_n)_{n\geq 1}$ is a growth scheme for unlabelled $p$-mobiles. Define
	$$f_n(m,m')=\frac{1}{|V(m)|{2p-1 \choose p}^n}\sum_{v\in V(m)}\sum_{v'\in V(m')}1_{\epsilon=\epsilon'}g_n(\tilde{\tau},\tilde{\tau'}),$$
where $(m,v)=\Psi(\tau,\epsilon)$ and $(m',v')=\Psi(\tau',\epsilon')$, and $\tilde{\tau}, \tilde{\tau'}$ are obtained by forgetting the labels on the black vertices of $\tau, \tau'$. 

The sequence of functions $(f_n)_{n\geq 1}$ is a growth scheme for $2p$-angulations.
\end{proposition}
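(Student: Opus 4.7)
The plan is to verify the three defining properties of Definition~\ref{def:growth scheme for 2p-angulations} directly, using the BDFG bijection $\Psi : \LMob^p_k \times \{\pm 1\} \to \M^{2p,\bullet}_k$, Lemma~\ref{lemma:growing mobile to growing map}, and the hypothesis that $(g_n)_{n\geq 1}$ is a growth scheme on unlabelled $p$-mobiles. A preliminary observation used throughout is that every $m \in \M^{2p}_k$ has the same number of vertices $V_k := (p-1)k+2$ by Euler's formula, so $|\M^{2p,\bullet}_k| = V_k|\M^{2p}_k|$; combining this with the identity $|\M^{2p,\bullet}_k| = 2\binom{2p-1}{p}^k|\Mob^p_k|$ coming from $\Psi$ lets us rewrite the ratio $|\M^{2p}_{n+1}|/|\M^{2p}_n|$ in terms of mobile counts and of $V_n, V_{n+1}$.

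For properties (ii) and (iii) I would carry out two analogous direct computations. For (ii), fix $m \in \M^{2p}_{n+1}$ and interchange the order of summation so that $\sum_{m'}\sum_{v' \in V(m')}$ becomes $\sum_{(\tau',\epsilon')}$ via $\Psi^{-1}$. The indicator $\mathbf{1}_{\epsilon=\epsilon'}$ fixes $\epsilon'$, and because $g_n(\tilde\tau,\tilde\tau')$ depends only on $\tilde\tau'$, the sum over labellings of $\tau'$ contributes $\binom{2p-1}{p}^n$ for each $\tilde\tau'$. Property (ii) of the mobile growth scheme then gives $\sum_{\tilde\tau'} g_n(\tilde\tau,\tilde\tau') = 1$, and summing $V_{n+1}$ values over $v \in V(m)$ cancels exactly against the denominator, yielding $1$. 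For (iii) the argument is symmetric: fix $m' \in \M^{2p}_n$, pull out the constant $1/V_{n+1}$, rewrite $\sum_m\sum_v$ as $\sum_{(\tau,\epsilon)}$, let the sum over labellings of $\tau$ provide a factor $\binom{2p-1}{p}^{n+1}$, apply property (iii) of $(g_n)$ to obtain $|\Mob^p_{n+1}|/|\Mob^p_n|$, multiply by $V_n$ from $\sum_{v' \in V(m')}$, and use the preliminary observation to recognise the final product as $|\M^{2p}_{n+1}|/|\M^{2p}_n|$.

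Property (i) is the structural heart of the statement, and I expect it to be the main obstacle. If $f_n(m,m') > 0$, pick $v, v'$ with $\epsilon = \epsilon'$ and $g_n(\tilde\tau,\tilde\tau') > 0$; property (i) of the mobile growth scheme gives $\tilde\tau = \grow(\tilde\tau', c)$ for some white corner $c$. Writing $\tau = \grow(\tau_0, c, s)$ with $\tau_0 \in \LMob^p_n$ the labelled ungrow of $\tau$ and $s \in S_p$ the label of the removed black vertex, Lemma~\ref{lemma:growing mobile to growing map} produces a simple cycle $K$ and a vertex $w$ with $\coll(\Psi(\tau,\epsilon), K, w) = \Psi(\tau_0,\epsilon)$, so $m$ does admit a face collapse. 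What remains is to identify $\Psi(\tau_0,\epsilon)$ with $m'$ as an unpointed map: since $\tilde\tau_0 = \tilde\tau'$ but the labellings $\tau_0$ and $\tau'$ are determined independently by the pointings $v$ and $v'$, one must argue --- by exploiting the freedom in the choice of $(v,v')$ in the defining sum and the way $\Psi$ interlocks labelling with pointing --- that whenever $f_n(m,m')>0$ some pair makes $\tau_0$ and $\tau'$ compatible (up to a shift of distinguished vertex). This label-vs-pointing bookkeeping is, I anticipate, the most delicate step of the whole argument; the machinery will then couple a uniform $U_n \in \M^{2p}_n$ to a uniform $U_{n+1} \in \M^{2p}_{n+1}$ with $U_n = \coll(U_{n+1},C,v)$, proving Theorem~\ref{2p-angulations growth}.
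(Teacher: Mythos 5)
Your treatment of properties ii) and iii) is correct and is essentially the computation in the paper: interchange the sums, use $\Psi$ to convert $\sum_{m'}\sum_{v'}$ into a sum over $\LMob^p_n\times\{\pm1\}$, let the ${2p-1\choose p}^n$ labellings of each unlabelled mobile supply the factor that cancels the normalisation, and invoke the corresponding property of $(g_n)$, together with $|\M^{2p,\bullet}_k|=|V_k|\,|\M^{2p}_k|=2{2p-1\choose p}^k|\Mob^p_k|$ to convert the ratio of mobile counts into the ratio of map counts.

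The unresolved point is property i), and you have located exactly the right difficulty without closing it. From $f_n(m,m')>0$ you get a pair $(v,v')$ with $\epsilon=\epsilon'$ and $g_n(\tilde\tau,\tilde\tau')>0$, hence only the \emph{unlabelled} relation $\tilde\tau=\grow(\tilde\tau',c)$. Lemma~\ref{lemma:growing mobile to growing map} applies to the \emph{labelled} mobile $\tau_0$ obtained by restricting the labels of $\tau$, and identifies $\coll(m,K,w)$ with the unpointed $\Psi(\tau_0,\epsilon)$ --- not with $m'$, unless $\tau_0$ and $\tau'$ carry compatible labels. Your proposal defers this (``one must argue\dots''), and the escape route you sketch --- exploiting the freedom in the choice of $(v,v')$ --- is not obviously available, since a given pair $(m,m')$ may have a single contributing pair of pointings, and even that pair constrains only the unlabelled mobiles. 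For comparison, the paper's entire argument for i) is the assertion that ``the mobile $\tau$ corresponding to $(m,v)$ is obtained by growing the mobile $\tau'$ corresponding to $(m',v')$'', i.e.\ it takes the labelled growth relation for granted and supplies no justification for the label compatibility; so the step you flag is precisely the one the paper passes over in a single sentence. As written, then, your proof of i) is incomplete: you must either justify that the labels of $\tau$ restricted to $\tilde\tau'$ agree with those of $\tau'$ for some contributing pair $(v,v')$, or restrict the defining sum of $f_n$ to label-compatible pairs (which would force a re-examination of the normalisation used in ii) and iii)). Since i) is the one property that makes the coupling a genuine face-by-face growth rather than an arbitrary coupling of the two uniform laws, this is not a cosmetic omission.
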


\begin{proof}
The fact that the functions $(f_n)_{n\geq 1}$ satisfy condition i) of Definition~\ref{def:growth scheme for 2p-angulations} is due to Lemma~\ref{lemma:growing mobile to growing map}. Indeed, if $f_n(m,m')>0$, then there are $v\in m$ and $v'\in m'$ such that the mobile $\tau$ corresponding to $(m,v)$ is obtained by growing the mobile $\tau'$ corresponding to $(m',v')$. By Lemma~\ref{lemma:growing mobile to growing map}, it follows that $m'$ differs from $m$ by collapsing a face.

As for property ii), given $m\in\M^p_{n+1}$, set $C(n)=\frac{1}{|V(m)|{2p-1 \choose p}^n}$ and consider the sum
$$\sum_{m'\in\M^{2p}_n}f_n(m,m')=\sum_{m'\in\M^{2p}_n}C(n)\sum_{v\in V(m)}\sum_{v'\in V(m')}1_{\epsilon=\epsilon'}g_n(\tilde{\tau},\tilde{\tau'})=C(n)\sum_{v\in V(m)}\sum_{(m',v')\in\M^{2p,\bullet}_n}1_{\epsilon=\epsilon'}g_n(\tilde{\tau},\tilde{\tau'});$$
using the BDFG correspondence, we can rewrite the internal sum to obtain
$$C(n)\sum_{v\in V(m)}\sum_{(\tau',\epsilon')\in\LMob^p_n\times \{\pm1\}}1_{\epsilon=\epsilon'}g_n(\tilde{\tau},\tilde{\tau'})=C(n)\sum_{v\in V(m)}\sum_{\tau'\in\LMob^p_n}g_n(\tilde{\tau},\tilde{\tau'})=$$
$$=C(n)\sum_{v\in V(m)}\sum_{\tilde{\tau'}\in\Mob^p_n}{2p-1\choose p}^n g_n(\tilde{\tau},\tilde{\tau'})=C(n){2p-1 \choose p}^n\sum_{v\in V(m)}1=1.$$

Similarly, for iii) one has
$$\sum_{m\in\M^{2p}_{n+1}}f_n(m,m')=C(n)\sum_{v'\in V(m')}\sum_{\tau\in\LMob^p_{n+1}}g_n(\tilde{\tau},\tilde{\tau'})=$$
$$=C(n){2p-1 \choose p}^{n+1}|V(m')|\frac{|\Mob^p_{n+1}|}{|\Mob^p_{n}|}=\frac{|\LMob^p_{n+1}||V(m')|}{|\LMob^p_{n}||V(m)|}=\frac{|\M^{2p}_{n+1}|}{|\M^{2p}_{n}|}.$$
\end{proof}

\end{definition}

\section{Growing simple triangulations}\label{Growing simple triangulations}

In this section, we show the existence of a growth scheme for simple triangulations; we intend to use a similar growth operation to the one introduced for $2p$-angulations in the previous section, but note that some small tweak is necessary: it is of course not possible to collapse a single face of a simple triangulation to obtain a smaller simple triangulation. The operations we shall allow in this case consist in growing (or collapsing) \emph{two faces} at a time.

Given a triangulation $t\in\STr_n$ and an edge $e$ of $t$ that is not the root edge, we let $\coll(t,e)$ be the triangulation obtained as follows: the edge $e$ has endpoints $v,w$ and is adjacent to two triangular faces; we collapse these two faces by eliminating $e$, identifying $v$ with $w$, and ``gluing together'' the remaining pairs of edges that now form two cycles of length two (Figure~\ref{fig: triangulation collapse}). If the root edge is on the boundary of one of the two collapsed faces, we simply maintain its status and orientation in the new triangulation.

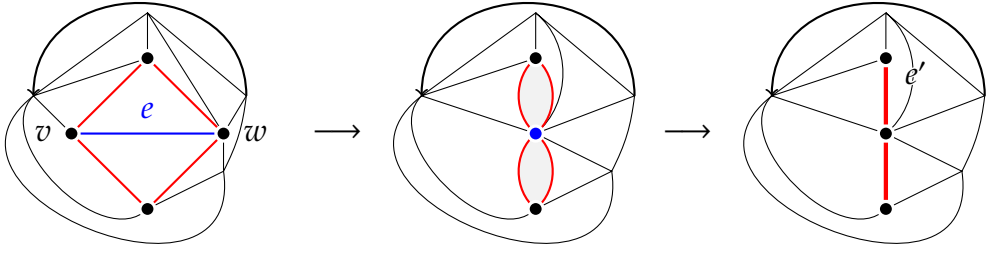
\begin{figure}
\centering
\begin{tikzpicture}[vertex/.style={circle, fill=black, inner sep=2pt, draw=white, very thick}]
\clip[use as bounding box] (-1,-2) rectangle (4,2);
\draw (0,0)--(-0.5,0.5)--(1,1)--(1,1.6)--(-0.5,0.5) (1,1.6)--(2,0)--(2.3,0.5)--(1,1.6) [bend right=90] (-0.5,0.5) to (1,-1);
\draw (1,-1)--(2,-0.5)--(2,0) [bend right=10] (2,-0.5) to (2.3,0.5);
\draw [bend left=90, <-, looseness=1.5, thick] (-0.5,0.5) to (2.3,0.5);
\draw [bend right=120, looseness=2] (-0.5,0.5) to (2,-0.5);
\node[vertex, label=left:$v$] (A) at (0,0) {};
\node[vertex] (B) at (1,-1) {};
\node[vertex, label=right:$w$] (C) at (2,0) {};
\node[vertex] (D) at (1,1) {};

\draw[thick, red] (A)--(B)--(C)--(D)--(A);
\draw[thick, blue] (A)--(C);

\node (e) at (1,0.3) {\textcolor{blue}{\contour{white}{$e$}}};
\node (->) at (3.5,0) {$\longrightarrow$};
\end{tikzpicture}
\begin{tikzpicture}[vertex/.style={circle, fill=black, inner sep=2pt, draw=white, very thick}]
\clip[use as bounding box] (-1,-2) rectangle (3.5,2);
\draw [bend left=90, <-, looseness=1.5, thick] (-0.5,0.5) to (2.3,0.5);
\draw (1,0)--(-0.5,0.5)--(1,1)--(1,1.6)--(-0.5,0.5) [bend left=50] (1,1.6) to (1,0) (1,0)--(2.3,0.5)--(1,1.6) [bend right=90] (-0.5,0.5) to (1,-1);
\draw (1,-1)--(2,-0.5)--(1,0) [bend right=10] (2,-0.5) to (2.3,0.5);
\draw [bend right=120, looseness=2] (-0.5,0.5) to (2,-0.5);
\draw[thick, red, bend left=50, fill=gray!10] (1,-1) to (1,0) to (1,1) to (1,0) to (1,-1);
\node[vertex] (B) at (1,-1) {};
\node[vertex, fill=blue] (AC) at (1,0) {};
\node[vertex] (D) at (1,1) {};
\node (->) at (3,0) {$\longrightarrow$};
\end{tikzpicture}
\begin{tikzpicture}[vertex/.style={circle, fill=black, inner sep=2pt, draw=white, very thick}]
\clip[use as bounding box] (-1,-2) rectangle (2.5,2);
\draw [bend left=90, <-, looseness=1.5, thick] (-0.5,0.5) to (2.3,0.5);
\draw (1,0)--(-0.5,0.5)--(1,1)--(1,1.6)--(-0.5,0.5) [bend left=50] (1,1.6) to (1,0) (1,0)--(2.3,0.5)--(1,1.6) [bend right=90] (-0.5,0.5) to (1,-1);
\draw (1,-1)--(2,-0.5)--(1,0) [bend right=10] (2,-0.5) to (2.3,0.5);
\draw [bend right=120, looseness=2] (-0.5,0.5) to (2,-0.5);
\node[vertex] (B) at (1,-1) {};
\node[vertex] (AC) at (1,0) {};
\node[vertex] (D) at (1,1) {};
\draw[ultra thick, red] (B)--(AC)--(D);
\node (e') at (1.4,0.8) {\contour{white}{$e'$}};
\end{tikzpicture}
\caption{\label{fig: triangulation collapse}On the left, a simple triangulation $T\in\STr_{12}$; collapsing the two faces adjacent to the edge $e$ identifies vertices $v$ and $w$ an yields the triangulation $\coll(T,e)\in\STr_{10}$ depicted on the right. Note that collapsing a pair of faces does not necessarily yield a simple triangulation: for example, the triangulation $\coll(\coll(T,e),e')$ is no longer simple.}
\end{figure}

We say that $\coll(t,e)$ is obtained from $t$ by \emph{collapsing a pair of (adjacent) faces} and that $t$ is obtained from $\coll(t,e)$ by \emph{growing a pair of (adjacent) faces}.

In complete analogy with Definition~\ref{def:growth scheme for 2p-angulations}, we give
\begin{definition}\label{def:growth scheme for triangulations}
	A \emph{growth scheme} for simple triangulations is a collection of functions $f_n:\STr_{n+1}\times\STr_{n}\to[0,1]$, for $n\geq 1$, such that
\begin{itemize}
\item[i)] if $f_n(T,t)>0$, then $t=\coll(T,e)$ for some edge $e$ of $T$;
\item[ii)] for all $T\in \STr_{n+1}$, $\sum_{t\in \STr_{n}}f_n(T,t)=1$;	
\item[ii)] for all $t\in \STr_{n}$, $\sum_{T\in \STr_{n+1}}f_n(T,t)=\frac{|\STr_{n+1}|}{|\STr_{n}|}$.
\end{itemize}
\end{definition}

In order to show the existence of a growth scheme for simple triangulations, we intend to relate the operation of growing/collapsing pairs of faces to the growth operations defined on complete $d$-ary trees in Section~\ref{2p-angulations, mobiles, trees}, and then adapt a growth scheme for $d$-ary trees obtained by Theorem~\ref{d-ary trees growth}.

In order to do this, we first need to bridge the gap between triangulations and $d$-ary trees, which we do via the Poulalhon-Schaeffer construction we recalled in Section~\ref{triangulations, blossoming trees}. But first, we express a general blossoming tree $\tau$ as a pair $(L_\tau,R_\tau)$ of complete 4-ary trees, which will enable us to introduce a new bijection relating such pairs to simple triangulations.

\begin{construction}[\emph{of a pair of complete 4-ary trees from a blossoming tree}]\label{BT to (l,r)}
Given a blossoming tree $t\in\BT_n$, consider the two (multi-type) plane trees $L(t), R(t)$ obtained as follows. The origin $\rho$ of $t$ has one child, which is a non-blossom vertex $v$; $v$ has two blossom neighbours, $\rho$ and $\rho'$. Build $L(t)$ by erasing $\rho, \rho'$ and all descendants of $v$ that come after $\rho'$ in the clockwise contour of $t$; root the ensuing tree in the corner that used to contain $\rho$. Similarly, $R(t)$ is built by erasing $\rho, \rho'$ and all descendants of $v$ that come before $\rho'$ in the clockwise contour of $t$, and rooting the ensuing tree in the corner that used to contain $\rho$. The objects $L(t)$ and $R(t)$ (see Figure~\ref{fig:L(t),R(t)}) are plane trees whose vertices are either blossoms or non-blossoms; their origin is a non-blossom and has no blossom neighbours; all other non-blossoms have exactly two blossom neighbours and all blossoms are leaves. We shall call the set of such plane trees having $n$ non-blossoms $T_n$. Naturally, given $L\in T_a$ and $R\in T_b$, by gluing the two trees together and separating them with two blossoms (one of which becomes the root), one can recover the corresponding blossoming tree in $\BT_{a+b-1}$. The mapping $t\mapsto (L(t),R(t))$ is a natural bijection between $\BT_n$ and $\cup_{k=1}^n T_k\times T_{n+1-k}$.

We now show how $L(t)\in T_a$ can be interpreted as an element of $\CT^4_{a-1}$ (similarly for $R(t)$). Indeed, the sets $(T_k)_{k\geq 1}$ and $(\CT^4_k)_{k\geq 0}$ essentially satisfy the same recursive structure; we will therefore build the desired correspondence inductively.

Suppose $a=1$, in which case $L(t)$ is the tree with a single non-blossom vertex and no blossoms, which is the only element of $T_1$; this corresponds to the ``degenerate'' complete 4-ary tree with only an origin, which is the only element of $\CT^4_0$. Now assume that we have a way to interpret elements of $T_i$, for $1\leq i< k$, as elements of $\CT^4_{i-1}$, and that $L(t)$ is an element of $T_k$, where $k>1$. We can decompose $L(t)$ into 4 multi-type plane trees $t_1, t_2, t_3, t_4$, each of them in $T_i$ for some $i<k$, as follows.

Since $k>1$, $L(t)$ has an origin $w$ and a rightmost child of the origin $z$ (both of them non-blossoms, since $w$ has no blossom neighbours). Let $t_1$ be the multi-type tree obtained by erasing $z$ and all its descendants, which is in $T_{k_1}$ for some $k_1<k$. Now consider the multi-type plane tree $t'$ obtained by erasing $t_1$ from $L(t)$ and rooting the ensuing tree in the corner that used to contain the edge from $w$ to $z$. The tree $t'$ is not quite a blossoming tree, as it's not rooted in a blossom; however, it can be split similarly to how we split $t$ at the very beginning, by considering the multi-type tree to the left of the leftmost blossom child of $z$ (rooted in $z$), the one between the two blossom children of $z$, and the one to the right of the right blossom child of $z$. These three trees are $t_2, t_3, t_4$, and they belong to $T_{k_2}, T_{k_3}, T_{k_4}$ for some $k_2, k_3, k_4<k$ such that $k_1+k_2+k_3+k_4=k+2$.

By assumption, $t_i$ corresponds to a complete 4-ary tree $\tilde{t}_i$ in $\CT^4_{k_i-1}$; we will simply have $L(t)$ correspond to the complete 4-ary tree obtained by taking the 4-ary tree $\tilde{t}_0\in \CT^4_1$ with leaves $v_1, v_2, v_3, v_4$ (in order from left to right) and grafting onto the leaf $v_i$ the 4-ary tree $\tilde{t}_i$, which yields a 4-ary tree in $\CT^4_{1+k_1+k_2+k_3+k_4-4}=\CT^4_{1+k+2-4}=\CT^4_{k-1}$.
\end{construction}

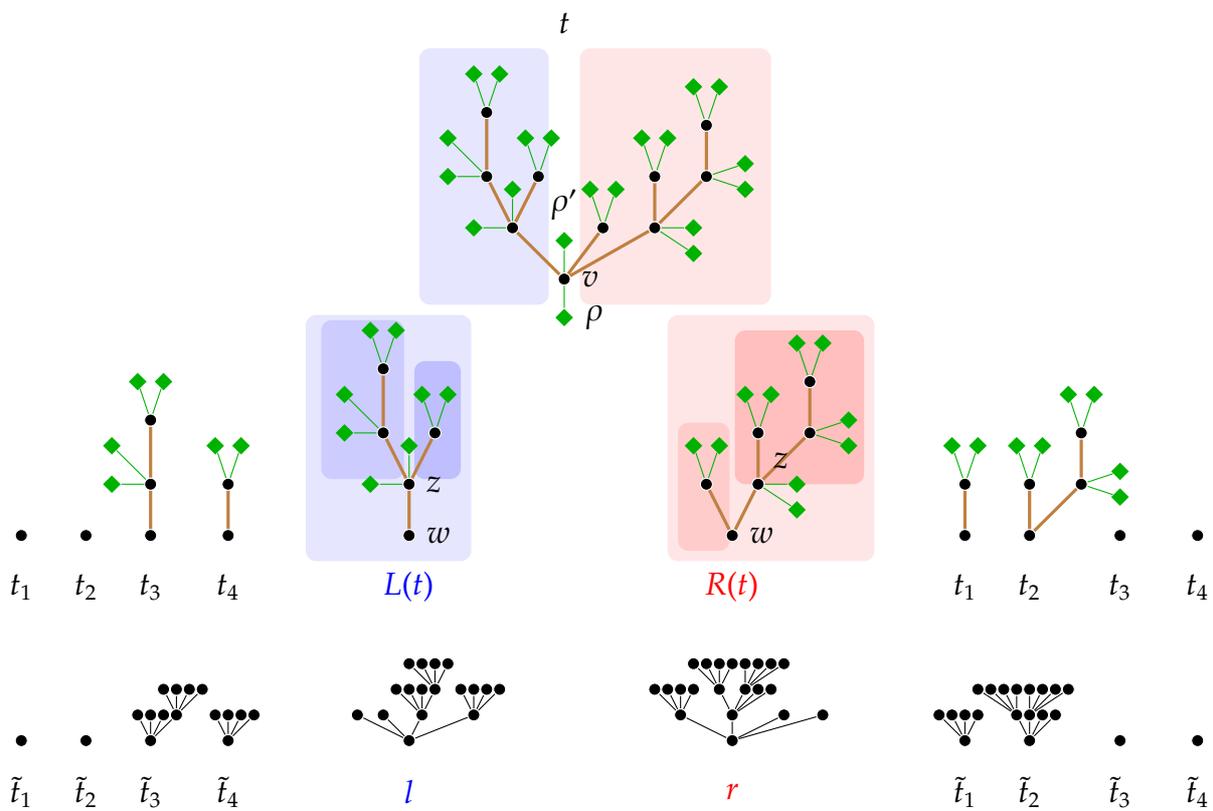
\begin{figure}\centering
\begin{tikzpicture}[vertex/.style={circle, inner sep=1.6pt, draw=white, fill=black}, gem/.style={diamond, inner sep=1.7pt, fill=green!70!black}, branch/.style={very thick, brown},blossom/.style={green!70!black}, simple/.style={}, scale=.68]
		\begin{pgfonlayer}{nodelayer}
		\node [style=vertex, label=right:$v$] (0) at (0, 2) {};
		\node [style=vertex] (1) at (-1, 3) {};
		\node [style=vertex] (2) at (0.75, 3) {};
		\node [style=vertex] (3) at (1.75, 3) {};
		\node [style=vertex] (4) at (-1.5, 4) {};
		\node [style=vertex] (5) at (-0.5, 4) {};
		\node [style=vertex] (6) at (-1.5, 5.25) {};
		\node [style=vertex] (7) at (1.75, 4) {};
		\node [style=vertex] (8) at (2.75, 4) {};
		\node [style=vertex] (9) at (2.75, 5) {};
		\node [style=gem, label=above:$\rho'$] (10) at (0, 2.75) {};
		\node [style=gem, label=right:$\rho$] (11) at (0, 1.25) {};
		\node [style=gem] (12) at (-1.75, 3) {};
		\node [style=gem] (13) at (-1, 3.75) {};
		\node [style=gem] (14) at (-2.25, 4) {};
		\node [style=gem] (15) at (-2.25, 4.75) {};
		\node [style=gem] (16) at (-1.75, 6) {};
		\node [style=gem] (17) at (-1.25, 6) {};
		\node [style=gem] (18) at (-0.75, 4.75) {};
		\node [style=gem] (19) at (-0.25, 4.75) {};
		\node [style=gem] (20) at (0.5, 3.75) {};
		\node [style=gem] (21) at (1, 3.75) {};
		\node [style=gem] (22) at (1.5, 4.75) {};
		\node [style=gem] (23) at (2, 4.75) {};
		\node [style=gem] (24) at (2.5, 3) {};
		\node [style=gem] (25) at (2.5, 2.5) {};
		\node [style=gem] (26) at (3.5, 4.25) {};
		\node [style=gem] (27) at (3.5, 3.75) {};
		\node [style=gem] (28) at (2.5, 5.75) {};
		\node [style=gem] (29) at (3, 5.75) {};
		\node [style=gem] (30) at (-3.75, 1) {};
		\node [style=gem] (31) at (-2.25, -0.25) {};
		\node [style=vertex, label=right:$w$] (32) at (-3, -3) {};
		\node [style=gem] (33) at (-3.75, -2) {};
		\node [style=vertex, label=right:$z$] (34) at (-3, -2) {};
		\node [style=vertex] (35) at (-2.5, -1) {};
		\node [style=gem] (36) at (-3, -1.25) {};
		\node [style=gem] (37) at (-4.25, -0.25) {};
		\node [style=gem] (38) at (-4.25, -1) {};
		\node [style=vertex] (39) at (-3.5, 0.25) {};
		\node [style=gem] (40) at (-3.25, 1) {};
		\node [style=gem] (41) at (-2.75, -0.25) {};
		\node [style=vertex] (42) at (-3.5, -1) {};
		\node [style=gem] (43) at (2.5, -1.25) {};
		\node [style=gem] (44) at (5, 0.75) {};
		\node [style=vertex] (45) at (4.75, -0) {};
		\node [style=vertex, label=above right:$z$] (46) at (3.75, -2) {};
		\node [style=vertex] (47) at (4.75, -1) {};
		\node [style=gem] (48) at (4.5, 0.75) {};
		\node [style=vertex] (49) at (3.75, -1) {};
		\node [style=vertex, label=right:$w$] (50) at (3.25, -3) {};
		\node [style=gem] (51) at (4.5, -2.5) {};
		\node [style=vertex] (52) at (2.75, -2) {};
		\node [style=gem] (53) at (5.5, -1.25) {};
		\node [style=gem] (54) at (3, -1.25) {};
		\node [style=gem] (55) at (4, -0.25) {};
		\node [style=gem] (56) at (5.5, -0.75) {};
		\node [style=gem] (57) at (3.5, -0.25) {};
		\node [style=gem] (58) at (4.5, -2) {};
		\node [style=gem] (59) at (7.5, -1.25) {};
		\node [style=gem] (60) at (8, -1.25) {};
		\node [style=vertex] (61) at (7.75, -2) {};
		\node [style=vertex] (62) at (10, -1) {};
		\node [style=vertex] (63) at (10, -2) {};
		\node [style=gem] (64) at (8.75, -1.25) {};
		\node [style=gem] (65) at (10.75, -2.25) {};
		\node [style=gem] (66) at (10.25, -0.25) {};
		\node [style=gem] (67) at (10.75, -1.75) {};
		\node [style=vertex] (68) at (9, -3) {};
		\node [style=gem] (69) at (9.75, -0.25) {};
		\node [style=vertex] (70) at (9, -2) {};
		\node [style=gem] (71) at (9.25, -1.25) {};
		\node [style=vertex] (72) at (10.75, -3) {};
		\node [style=vertex] (73) at (12.25, -3) {};
		\node [style=vertex] (74) at (-8, -3) {};
		\node [style=gem] (75) at (-8.25, -0) {};
		\node [style=gem] (76) at (-8.75, -2) {};
		\node [style=gem] (77) at (-8.75, -1.25) {};
		\node [style=vertex] (78) at (-8, -2) {};
		\node [style=gem] (79) at (-7.75, -0) {};
		\node [style=vertex] (80) at (-8, -0.75) {};
		\node [style=vertex] (81) at (-6.5, -2) {};
		\node [style=vertex] (82) at (-6.5, -3) {};
		\node [style=gem] (83) at (-6.75, -1.25) {};
		\node [style=gem] (84) at (-6.25, -1.25) {};
		\node [style=vertex] (85) at (-10.5, -3) {};
		\node [style=vertex] (86) at (-9.25, -3) {};
		\node [style=vertex] (87) at (-6.5, -7) {};
		\node [style=vertex] (88) at (-6.5, -6.5) {};
		\node [style=vertex] (89) at (-6.25, -6.5) {};
		\node [style=vertex] (90) at (-6, -6.5) {};
		\node [style=vertex] (91) at (-6.75, -6.5) {};
		\node [style=vertex] (92) at (-8, -6.5) {};
		\node [style=vertex] (93) at (-8, -7) {};
		\node [style=vertex] (94) at (-8.25, -6.5) {};
		\node [style=vertex] (95) at (-7.75, -6.5) {};
		\node [style=vertex] (96) at (-7.5, -6.5) {};
		\node [style=vertex] (97) at (-7.75, -6) {};
		\node [style=vertex] (98) at (-7.5, -6) {};
		\node [style=vertex] (99) at (-7.25, -6) {};
		\node [style=vertex] (100) at (-7, -6) {};
		\node [style=vertex] (101) at (-9.25, -7) {};
		\node [style=vertex] (102) at (-10.5, -7) {};
		\node [style=vertex] (103) at (-3, -7) {};
		\node [style=vertex] (104) at (-4, -6.5) {};
		\node [style=vertex] (105) at (-3.5, -6.5) {};
		\node [style=vertex] (106) at (-2.75, -6.5) {};
		\node [style=vertex] (107) at (-1.75, -6.5) {};
		\node [style=vertex] (108) at (-3.25, -6) {};
		\node [style=vertex] (109) at (-3, -6) {};
		\node [style=vertex] (110) at (-2.75, -6) {};
		\node [style=vertex] (111) at (-2.5, -6) {};
		\node [style=vertex] (112) at (-2, -6) {};
		\node [style=vertex] (113) at (-1.75, -6) {};
		\node [style=vertex] (114) at (-1.5, -6) {};
		\node [style=vertex] (115) at (-1.25, -6) {};
		\node [style=vertex] (116) at (-3, -5.5) {};
		\node [style=vertex] (117) at (-2.75, -5.5) {};
		\node [style=vertex] (118) at (-2.5, -5.5) {};
		\node [style=vertex] (119) at (-2.25, -5.5) {};
		\node [style=vertex] (120) at (7.75, -7) {};
		\node [style=vertex] (121) at (9, -7) {};
		\node [style=vertex] (122) at (10.75, -7) {};
		\node [style=vertex] (123) at (12.25, -7) {};
		\node [style=vertex] (124) at (7.75, -3) {};
		\node [style=vertex] (125) at (7.25, -6.5) {};
		\node [style=vertex] (126) at (7.5, -6.5) {};
		\node [style=vertex] (127) at (7.75, -6.5) {};
		\node [style=vertex] (128) at (8, -6.5) {};
		\node [style=vertex] (129) at (8.75, -6.5) {};
		\node [style=vertex] (130) at (9, -6.5) {};
		\node [style=vertex] (131) at (9.25, -6.5) {};
		\node [style=vertex] (132) at (9.5, -6.5) {};
		\node [style=vertex] (133) at (8, -6) {};
		\node [style=vertex] (134) at (8.25, -6) {};
		\node [style=vertex] (135) at (8.5, -6) {};
		\node [style=vertex] (136) at (8.75, -6) {};
		\node [style=vertex] (137) at (9, -6) {};
		\node [style=vertex] (138) at (9.25, -6) {};
		\node [style=vertex] (139) at (9.5, -6) {};
		\node [style=vertex] (140) at (9.75, -6) {};
		\node [style=vertex] (141) at (3.25, -7) {};
		\node [style=vertex] (142) at (2.25, -6.5) {};
		\node [style=vertex] (143) at (3.25, -6.5) {};
		\node [style=vertex] (144) at (4.25, -6.5) {};
		\node [style=vertex] (145) at (5, -6.5) {};
		\node [style=vertex] (146) at (1.75, -6) {};
		\node [style=vertex] (147) at (2, -6) {};
		\node [style=vertex] (148) at (2.25, -6) {};
		\node [style=vertex] (149) at (2.5, -6) {};
		\node [style=vertex] (150) at (3, -6) {};
		\node [style=vertex] (151) at (3.5, -6) {};
		\node [style=vertex] (152) at (3.75, -6) {};
		\node [style=vertex] (153) at (4, -6) {};
		\node [style=vertex] (154) at (2.5, -5.5) {};
		\node [style=vertex] (155) at (2.75, -5.5) {};
		\node [style=vertex] (156) at (3, -5.5) {};
		\node [style=vertex] (157) at (3.25, -5.5) {};
		\node [style=vertex] (158) at (3.5, -5.5) {};
		\node [style=vertex] (159) at (3.75, -5.5) {};
		\node [style=vertex] (160) at (4, -5.5) {};
		\node [style=vertex] (161) at (4.25, -5.5) {};
		\node   (162) at (0, 7) {$t$};
		\node[blue] (163) at (-3, -4) {$L(t)$};
		\node[blue] (163b) at (-3, -8) {$l$};
		\node   (164) at (-9.25, -4) {$t_2$};
		\node   (165) at (-8, -4) {$t_3$};
		\node   (166) at (-6.5, -4) {$t_4$};
		\node   (167) at (-9.25, -8) {$\tilde t_2$};
		\node   (168) at (-8, -8) {$\tilde t_3$};
		\node   (169) at (-6.5, -8) {$\tilde t_4$};
		\node   (170) at (-3, -8) {};
		\node[red] (171) at (3.25, -4) {$R(t)$};
		\node[red] (171b) at (3.25, -8) {$r$};
		\node   (172) at (7.75, -4) {$t_1$};
		\node   (173) at (9, -4) {$t_2$};
		\node   (174) at (10.75, -4) {$t_3$};
		\node   (175) at (12.25, -4) {$t_4$};
		\node   (176) at (3.25, -8) {};
		\node   (177) at (7.75, -8) {$\tilde t_1$};
		\node   (178) at (9, -8) {$\tilde t_2$};
		\node   (179) at (10.75, -8) {$\tilde t_3$};
		\node   (180) at (12.25, -8) {$\tilde t_4$};
		\node   (181) at (-10.5, -4) {$t_1$};
		\node   (182) at (-10.5, -8) {$\tilde t_1$};
	\end{pgfonlayer}
	\begin{pgfonlayer}{edgelayer}
	\fill[red!10, rounded corners] (0.3,1.5) rectangle (4,6.5);
	\fill[blue!10, rounded corners] (-0.3,1.5) rectangle (-2.8,6.5);
	\fill[red!10, rounded corners] (6,-3.5) rectangle (2,1.3);
	\fill[blue!10, rounded corners] (-5,-3.5) rectangle (-1.8,1.3);
	\fill[red!20, rounded corners] (2.2,-3.3) rectangle (3.2,-0.8);
	\fill[red!25, rounded corners] (3.3,-2) rectangle (5.8,1);
	\fill[blue!20, rounded corners] (-4.7,-1.9) rectangle (-3.1,1.2);
	\fill[blue!25, rounded corners] (-2.9,-1.9) rectangle (-2,0.4);
		\draw [style=branch] (1) to (0);
		\draw [style=branch] (2) to (0);
		\draw [style=branch] (3) to (0);
		\draw [style=branch] (7) to (3);
		\draw [style=branch] (8) to (3);
		\draw [style=branch] (9) to (8);
		\draw [style=branch] (5) to (1);
		\draw [style=branch] (4) to (1);
		\draw [style=branch] (6) to (4);
		\draw [style=blossom] (10) to (0);
		\draw [style=blossom] (0) to (11);
		\draw [style=blossom] (12) to (1);
		\draw [style=blossom] (13) to (1);
		\draw [style=blossom] (15) to (4);
		\draw [style=blossom] (14) to (4);
		\draw [style=blossom] (16) to (6);
		\draw [style=blossom] (17) to (6);
		\draw [style=blossom] (18) to (5);
		\draw [style=blossom] (19) to (5);
		\draw [style=blossom] (3) to (25);
		\draw [style=blossom] (3) to (24);
		\draw [style=blossom] (20) to (2);
		\draw [style=blossom] (21) to (2);
		\draw [style=blossom] (22) to (7);
		\draw [style=blossom] (23) to (7);
		\draw [style=blossom] (8) to (27);
		\draw [style=blossom] (8) to (26);
		\draw [style=blossom] (28) to (9);
		\draw [style=blossom] (29) to (9);
		\draw [style=branch] (34) to (32);
		\draw [style=branch] (35) to (34);
		\draw [style=branch] (42) to (34);
		\draw [style=branch] (39) to (42);
		\draw [style=blossom] (33) to (34);
		\draw [style=blossom] (36) to (34);
		\draw [style=blossom] (37) to (42);
		\draw [style=blossom] (38) to (42);
		\draw [style=blossom] (30) to (39);
		\draw [style=blossom] (40) to (39);
		\draw [style=blossom] (41) to (35);
		\draw [style=blossom] (31) to (35);
		\draw [style=branch] (52) to (50);
		\draw [style=branch] (46) to (50);
		\draw [style=branch] (49) to (46);
		\draw [style=branch] (47) to (46);
		\draw [style=branch] (45) to (47);
		\draw [style=blossom] (46) to (51);
		\draw [style=blossom] (46) to (58);
		\draw [style=blossom] (43) to (52);
		\draw [style=blossom] (54) to (52);
		\draw [style=blossom] (57) to (49);
		\draw [style=blossom] (55) to (49);
		\draw [style=blossom] (47) to (53);
		\draw [style=blossom] (47) to (56);
		\draw [style=blossom] (48) to (45);
		\draw [style=blossom] (44) to (45);
		\draw [style=blossom] (59) to (61);
		\draw [style=blossom] (60) to (61);
		\draw [style=branch] (70) to (68);
		\draw [style=branch] (63) to (68);
		\draw [style=branch] (62) to (63);
		\draw [style=blossom] (64) to (70);
		\draw [style=blossom] (71) to (70);
		\draw [style=blossom] (63) to (65);
		\draw [style=blossom] (63) to (67);
		\draw [style=blossom] (69) to (62);
		\draw [style=blossom] (66) to (62);
		\draw [style=branch] (78) to (74);
		\draw [style=branch] (80) to (78);
		\draw [style=blossom] (77) to (78);
		\draw [style=blossom] (76) to (78);
		\draw [style=blossom] (75) to (80);
		\draw [style=blossom] (79) to (80);
		\draw [style=branch] (81) to (82);
		\draw [style=blossom] (83) to (81);
		\draw [style=blossom] (84) to (81);
		\draw [style=simple] (88) to (87);
		\draw [style=simple] (89) to (87);
		\draw [style=simple] (90) to (87);
		\draw [style=simple] (91) to (87);
		\draw [style=simple] (92) to (93);
		\draw [style=simple] (95) to (93);
		\draw [style=simple] (96) to (93);
		\draw [style=simple] (94) to (93);
		\draw [style=simple] (97) to (96);
		\draw [style=simple] (98) to (96);
		\draw [style=simple] (99) to (96);
		\draw [style=simple] (100) to (96);
		\draw [style=simple] (104) to (103);
		\draw [style=simple] (105) to (103);
		\draw [style=simple] (106) to (103);
		\draw [style=simple] (103) to (107);
		\draw [style=simple] (108) to (106);
		\draw [style=simple] (109) to (106);
		\draw [style=simple] (110) to (106);
		\draw [style=simple] (111) to (106);
		\draw [style=simple] (116) to (111);
		\draw [style=simple] (117) to (111);
		\draw [style=simple] (118) to (111);
		\draw [style=simple] (119) to (111);
		\draw [style=simple] (112) to (107);
		\draw [style=simple] (113) to (107);
		\draw [style=simple] (114) to (107);
		\draw [style=simple] (115) to (107);
		\draw [style=branch] (61) to (124);
		\draw [style=simple] (125) to (120);
		\draw [style=simple] (126) to (120);
		\draw [style=simple] (127) to (120);
		\draw [style=simple] (128) to (120);
		\draw [style=simple] (129) to (121);
		\draw [style=simple] (130) to (121);
		\draw [style=simple] (131) to (121);
		\draw [style=simple] (132) to (121);
		\draw [style=simple] (133) to (129);
		\draw [style=simple] (134) to (129);
		\draw [style=simple] (135) to (129);
		\draw [style=simple] (136) to (129);
		\draw [style=simple] (137) to (130);
		\draw [style=simple] (138) to (130);
		\draw [style=simple] (139) to (130);
		\draw [style=simple] (140) to (130);
		\draw [style=simple] (143) to (141);
		\draw [style=simple] (142) to (141);
		\draw [style=simple] (144) to (141);
		\draw [style=simple] (145) to (141);
		\draw [style=simple] (149) to (142);
		\draw [style=simple] (148) to (142);
		\draw [style=simple] (147) to (142);
		\draw [style=simple] (146) to (142);
		\draw [style=simple] (150) to (143);
		\draw [style=simple] (151) to (143);
		\draw [style=simple] (152) to (143);
		\draw [style=simple] (153) to (143);
		\draw [style=simple] (154) to (150);
		\draw [style=simple] (155) to (150);
		\draw [style=simple] (156) to (150);
		\draw [style=simple] (157) to (150);
		\draw [style=simple] (158) to (151);
		\draw [style=simple] (159) to (151);
		\draw [style=simple] (151) to (160);
		\draw [style=simple] (151) to (161);
	\end{pgfonlayer}
	\end{tikzpicture}
	\caption{\label{fig:L(t),R(t)}Construction~\ref{BT to (l,r)} performed on $t\in\BT_{10}$. Below $t$, we see its left part $L(t)\in T_{5}$ and its right part $R(t)\in T_6$. Below these, the corresponding $4$-ary trees $l\in\CT^4_4$ and $r\in\CT^4_5$, constructing by grafting the four trees $\tilde{t}_1, \tilde{t}_2, \tilde{t}_3, \tilde{t}_4$ onto $\tilde{t}_1\in\CT^4_0$.}
\end{figure}

\begin{proof}[Proof of Proposition~\ref{4-ary trees and triangulations}]
The proposition now comes as an immediate consequence of the construction above: by joining the 4-ary trees $l$ and $r$ with an edge between their origins, oriented from $l$ to $r$, we obtain a tree all of whose vertices have degree $5$ or $1$, endowed with a distinguished oriented edge. This mapping is a bijection between $\cup_{a=0}^{n-1}\CT^4_a\times \CT^4_{n-1-a}$ and $\mathcal{T}_{n-1}$. Construction~\ref{BT to (l,r)} is in turn a bijection between the set $\BT_n$ and the set $\cup_{a=0}^{n-1}\CT^4_a\times \CT^4_{n-1-a}$. Construction~\ref{PS construction} gives a $2n$-to-1 map from $\BT_n\times\{\pm 1\}$ to $\STr_n$, hence a $2n$-to-1 map from $\mathcal{T}_{n-1}\times\{\pm 1\}$ to $\STr_n$, proving Proposition~\ref{4-ary trees and triangulations}.
\end{proof}

\begin{remark}

The map of Proposition~\ref{4-ary trees and triangulations}, which essentially induces a bijection between simple triangulation and a certain set of trees with vertices of degree 5 or 1, will prove very useful in our context, but it is not without some drawbacks. On the one hand, we no longer have to distinguish between blossoms and non-blossoms, and tasks related to enumeration tend to become more straightforward. Some important properties are preserved, and we will see later that growing the tree at a leaf does correspond to growing a pair of faces in the triangulation.

On the other hand, metric information about the triangulation, and even just the blossoming tree, becomes very ``distorted'' when seen through the lens of this 4-ary tree. Note that, because of the special role of the first child of every vertex in the construction, exploring ``on the right'' corresponds to moving faster in the blossoming tree than exploring ``on the left''. Moreover, the construction is non-canonical in many respects, as our choice of having the leftmost child represent the subtree to the left of the rightmost branch is one of several possible.  
\end{remark}

\begin{lemma}\label{growing 4-ary trees to growing blossom}
Construction~\ref{BT to (l,r)} gives a bijection between $\BT_n$ and $\cup_{a=0}^{n-1}\CT^4_a\times \CT^4_{n-1-a}$. Moreover, given $t\in \BT_n$ and its pair of corresponding $4$-ary trees $(l,r)$, we can naturally identify the set of internal vertices of $l$ and $r$ with the set of non-blossom vertices of $t$ other than the one connected to the origin blossom. Finally, given a leaf $v$ of $l$ or $r$, let $t^v$ be the tree in $\BT_{n+1}$ that corresponds to the pair $(\grow(l,v),r)$ or $(l,\grow(r,v))$, respectively. There is a corner of a non-blossom vertex $w$ of $t$ such that $t^v$ is obtained by grafting onto that corner a single edge from $w$ to a non-blossom $w'$, in turn connected to two blossom  children.
\end{lemma}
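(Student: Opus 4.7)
My plan is to handle the three assertions of the lemma (bijection, vertex identification, local effect of growing at a leaf) by a single induction on $n$ that mirrors the recursive structure laid out in Construction~\ref{BT to (l,r)}.

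For the bijection between $\BT_n$ and $\cup_{a=0}^{n-1}\CT^4_a\times\CT^4_{n-1-a}$, the main point is that both sides admit the same 4-fold recursion. On the $T_k$ side, the construction extracts the rightmost non-blossom child $z$ of the origin $v$ of $L(t)$, which splits $L(t)$ into $t_1\in T_{k_1}$ (the part above $z$, rooted at $v$) and $t_2,t_3,t_4\in T_{k_2},T_{k_3},T_{k_4}$ (the three regions at $z$ delimited by its two blossom children), with $k_1+k_2+k_3+k_4=k+2$. On the $\CT^4_{k-1}$ side, a complete $4$-ary tree decomposes at its origin into four subtrees of sizes $k_i-1$, with the same sum. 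By the inductive hypothesis each $t_i\in T_{k_i}$ matches bijectively with an element $\tilde t_i\in\CT^4_{k_i-1}$, and grafting $\tilde t_1,\ldots,\tilde t_4$ onto the four leaves of a star $\tilde t_0\in\CT^4_1$ yields the bijective match in size $k$. The outer gluing $\BT_n\leftrightarrow\cup_{k=1}^n T_k\times T_{n+1-k}$ described in the construction (peeling off or reinserting $\rho$ and $\rho'$ together with the non-blossom $v$) is an elementary bijection, which concludes Part~1.

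For the natural identification in Part~2, I unroll the recursion. Each recursive layer produces exactly one new center vertex in $l$ (the origin of the star $\tilde t_0$ introduced at that step), and precisely one new non-blossom of $L(t)$ (the broken-off vertex $z$). Only the root non-blossom $v$ is never played the role of $z$, being always the origin of the remaining $t_1$. Therefore induction gives a one-to-one correspondence between the non-leaf vertices of $l$ in the complete-tree sense (equivalently, the labelled-view vertices of $l$, i.e.\ origin plus internal vertices) and the non-blossoms of $L(t)\setminus\{v\}$; symmetrically for $r$ and $R(t)$. Combining and noting $v$ is shared between $L(t)$ and $R(t)$, one obtains the claimed bijection with non-blossoms of $t$ other than $v$.

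For Part~3, let $v$ be a leaf of $l$. Its position in $l$ can be described recursively: $v$ is the $i$-th child ($i\in\{1,2,3,4\}$) of some non-leaf vertex $p$ of $l$, with the subtree grafted at that slot being degenerate. Under the bijection of Part~2, $p$ corresponds to a non-blossom of $t$; call $w$ the non-blossom of $t$ whose corner we need to identify. Tracing the recursion, if $p$ lies at the $j$-th level of the outermost $\tilde t_0$-star of its surrounding subproblem, then $w$ is either the origin of that local $T$-subproblem (case $i=1$) or the center $z$ just broken off (cases $i=2,3,4$). Growing $l$ at $v$ turns the degenerate slot into a star $\tilde t_0$, which under the inverse construction inserts a single new non-blossom $w'$ as a neighbour of $w$ at the corner of $w$ determined by $i$: to the immediate left of $w$'s current rightmost non-blossom child for $i=1$; immediately to the left of $w$'s leftmost blossom child, between $w$'s two blossom children, or to the right of $w$'s rightmost blossom child for $i=2,3,4$ respectively. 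In each case $w'$ inherits exactly two blossom leaves, as it is the sole non-blossom of a newly attached size-$1$ piece. The case where $v$ is a leaf of $r$ is handled identically. The main obstacle is this corner bookkeeping in Part~3: the construction's built-in asymmetry (the rightmost child plays a special role, and the first child slot refers upward while slots $2,3,4$ refer downward) forces a careful case split, but once one confirms that each leaf-slot is in bijective correspondence with a corner of a non-blossom, the growth assertion follows from the inductive machinery of Part~1.
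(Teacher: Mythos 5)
Your argument follows the paper's proof essentially step for step: the bijection and the vertex identification are read off the shared four-fold recursion between $T_k$ and $\CT^4_{k-1}$, and the effect of growing at a leaf is analysed by the same case split on the index $i\in\{1,2,3,4\}$ of the leaf among its siblings, with the new non-blossom $w'$ inserted either as a new leftmost non-blossom child of the local origin (case $i=1$) or into one of the three slots determined by the two blossoms of the broken-off rightmost child (cases $i=2,3,4$). The only divergence is cosmetic bookkeeping of which non-blossom gets called $w$ in case $i=1$ (you name the vertex actually receiving the new edge, the paper phrases both cases relative to the vertex identified with $p(v)$), so the proposal is correct and takes the same route as the paper.
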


\begin{proof}
The fact that the given construction is a bijection immediately follows from the fact that, given any pair $(l,r)\in	\cup_{a=0}^{n-1}\CT^4_a\times \CT^4_{n-1-a}$, we can easily obtain a unique tree $t\in\BT_n$ by reversing it. Its inductive structure also makes it clear that internal vertices of $l$ and $r$ correspond to non-blossoms of $L(t)$ and $R(t)$, with the exception of their identified origin which is the non-blossom neighbour of the root of $t$.

Let us now consider what the multi-type tree $\grow(l,v)$ corresponds to. Consider the parent $p(v)$ of $v$ in $l$; this corresponds to a non-blossom vertex $w$ of $L(t)$. If $v$ is the leftmost child of $p(v)$, the fact that it is a leaf implies that $w$ has only one non-blossom child. Changing $l$ into $\grow(l,v)$ corresponds to adding a leftmost non-blossom child to $w$, with no left non-blossom siblings or children (and therefore only two blossom children). If $v$ is the second, third or fourth child of $p(v)$, the fact that it is a leaf tells us that, letting $w'$ be the rightmost non-blossom child of $w$, $w'$ has no non-blossom children to the left of its left blossom child, or no non-blossom children between its two blossom children, or no non-blossom children to the right of the right blossom child. Replacing $l$ by $\grow(l,v)$ corresponds to adding a non-blossom child to $w'$ in the appropriate position with respect to the blossoms; the child added must have two blossom children and no non-blossom neighbours other than $w'$.

The construction for $v\in r$ is completely analogous.
\end{proof}

At this point, given a simple triangulation $t\in\STr_n$ and a sign $\epsilon=\pm1$, we have $2n$ blossoming trees $\tau\in \BT_n$ such that $\Xi(\tau,\epsilon)=t$; each of these can be expressed as a pair consisting of its left and right part, which we can interpret as a pair of 4-ary trees with a total of $n-1$ internal vertices between them. Note that, indeed, we have the following identity involving the number of simple triangulations and the number of $4$-ary trees:
\begin{equation}\label{formula triangulations-trees}\frac{2(4n-3)!}{n!(3n-1)!}=|\STr_n|=\frac{2}{2n}\sum_{k=0}^{n-1}|\CT^4_k||\CT^4_{n-1-k}|=\frac{1}{n}\sum_{k=0}^{n-1}\frac{1}{(3k+1)(3n-3k-2)}{4k \choose k}{4(n-1-k) \choose n-1-k}.\end{equation}

 What we now wish to show is that the operation of growing one of these two $4$-ary trees at a leaf yields a new blossoming tree $\tau'\in\BT_{n+1}$ such that $\Xi(\tau',\epsilon)$ is obtained from $t$ by growing a pair of adjacent faces. More specifically,

\begin{lemma}\label{lemma:growing trees to growing triangulations}
Let $\tau$ be a blossoming tree in $\BT_n$ and let $(l,r)\in \cup_{a=0}^{n-1}\CT^4_a+\CT^4_{n-1-a}$ be obtained from $\tau$ via Construction~\ref{BT to (l,r)}; let $v$ be a leaf of $l$ or $r$ and let $\tau'$ be the blossoming tree in $\BT_{n+1}$ such that Construction~\ref{BT to (l,r)} on $\tau'$ yields $(\grow(l,v), r)$ or $(l, \grow(r,v))$ (depending on which tree $v$ is a leaf of). Given $\epsilon=\pm1$, there exists an edge $e$ of $T=\Xi(\tau',\epsilon)$ such that $\Xi(\tau,\epsilon)=\coll(T,e)$.\end{lemma}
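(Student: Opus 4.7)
My strategy is to use Lemma~\ref{growing 4-ary trees to growing blossom} to pin down the local nature of the modification from $\tau$ to $\tau'$, and then trace its effect through Construction~\ref{PS construction}. That lemma provides a non-blossom vertex $w$ of $\tau$ and a corner $c$ of $w$ such that $\tau'$ is obtained from $\tau$ by grafting at $c$ a single edge $e := (w,w')$ joining $w$ to a new non-blossom vertex $w'$, with $w'$ having exactly two blossom children $b_1, b_2$ and no other non-blossom neighbor. Since $e$ is a non-blossom tree edge of $\tau'$, it belongs to $T = \Xi(\tau',\epsilon)$ and is distinct from the root edge of $T$ (which joins the auxiliary vertices $L,R$ created at the last step of Construction~\ref{PS construction}). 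This is the edge $e$ asserted by the statement.

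The core of the argument is to compare the closure procedures on $\tau$ and on $\tau'$. The cyclic counterclockwise contour of $\tau'$ coincides with that of $\tau$ everywhere except at $c$, which in $\tau'$ is replaced by the arc $c_{\mathrm{left}}, \alpha_1, b_1, \alpha_2, b_2, \alpha_3, c_{\mathrm{right}}$, where $\alpha_1,\alpha_2,\alpha_3$ are the three non-blossom corners of $w'$. Exploiting the order-independence of closures, I plan to perform first all closures of $\tau'$ whose target triple $(c_1,c_2,c_3)$ lies strictly outside this arc; these match exactly the closures of $\tau$ whose targets lie strictly outside $\{c\}$. A direct inspection of the closability condition shows that the remaining closures are of two kinds: (a) the closures of $b_1$ and $b_2$ themselves, performed in the order forced by the fact that each of them has the other as a blossom corner in its immediate vicinity, and targeting non-blossom corners just outside the arc — namely the immediate predecessor or successor of $c$ in $\tau$'s contour; and (b) the closures of $\tau$ whose target triple involved $c$ as $c_1$ or $c_2$, which are now \emph{deflected} in $\tau'$ to have target corner $\alpha_i$ of $w'$ for some $i \in \{1,3\}$ (note that closures of $\tau$ with $c$ as $c_3$ are unaffected, since $c_{\mathrm{left}}$ lies at the same vertex $w$). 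The combined effect of (a) and (b) is the creation of exactly two additional triangular faces in $T$ relative to $\Xi(\tau,\epsilon)$, both incident to the edge $e$.

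Applying $\coll(T,e)$ then eliminates $e$, identifies $w$ with $w'$, and glues together, on each side of $e$, the two remaining edges of the triangle adjacent to $e$ on that side. On each side, these two edges consist of a closure edge at $w'$ (produced by $b_1$ or $b_2$) and a closure edge at $w$ (produced by the corresponding deflected blossom of $\tau$); after the identification $w = w'$ they coincide, and gluing them produces exactly the single edge that the corresponding undeflected closure of $\tau$ draws in $\Xi(\tau,\epsilon)$ at the same location. All other edges, faces, and the root edge are preserved by construction: the root blossom lies outside the inserted subtree and $\epsilon$ is unchanged. Hence $\coll(T,e) = \Xi(\tau,\epsilon)$ as rooted simple triangulations.

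The main technical obstacle is the bookkeeping that justifies item (b) above, particularly in the sub-case where $c$ acted as $c_1$ for a closure of $\tau$: in $\tau'$ this closure is no longer immediately performable (the third non-blossom corner after the originating blossom is $b_1$, itself a blossom), and one must carefully track what happens once $b_1, b_2$ have closed and the contour has rearranged around $w'$. The four cases of Lemma~\ref{growing 4-ary trees to growing blossom} — corresponding to the position of $v$ as the first, second, third, or fourth child of its parent in the $4$-ary tree — determine the precise location of $c$ among the corners of $w$, but in every case the local picture at $\{w', b_1, b_2\}$ is the same tripod, producing the same two triangles incident to $e$ that $\coll(\cdot,e)$ removes cleanly.
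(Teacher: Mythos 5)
Your identification of the edge $e$ is not the one that makes the statement true, and this is a genuine error rather than a cosmetic difference. The paper takes $e$ to be the edge of $T$ arising from the closure of the \emph{right blossom child} $\beta$ of the new vertex $w'$ (an edge from $w'$ to some vertex $z$), whereas you take $e$ to be the tree edge $(w,w')$. These two collapses agree only when $w'$ has degree exactly $3$ in $T$. In general $w'$ acquires further neighbours: later closures can target the remaining free corner of $w'$, and when $w'$ is adjacent to one or both endpoints of the root edge, additional blossoms end up attached to $w'$ (see the captions of Figures~\ref{fig:case b} and~\ref{fig:case c}, where $w'$ has degree $5$). When $\deg_T(w')=k>3$, collapsing along $(w,w')$ removes the first and $k$-th faces around $w'$ and re-attaches the surviving $w'$-edges to $w$, while collapsing along the $\beta$-edge removes the first and second faces and re-attaches them to $z$; the resulting maps differ (one contains an edge from $z$ to the extra neighbours of $w'$, the other an edge from $w$ to them), and it is the latter that reproduces $\Xi(\tau,\epsilon)$, since in the closure of $\tau$ the corners that those blossoms target are corners of $z$, not of $w=p(w')$. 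So with your choice of $e$ the conclusion $\coll(T,e)=\Xi(\tau,\epsilon)$ fails whenever $\deg_T(w')>3$.

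Separately, your bookkeeping of the closures assumes that both blossoms $b_1,b_2$ of $w'$ get closed during the closure phase, producing the two extra triangles then and there. This is exactly what fails in the paper's cases b) and c): when $w'$ is adjacent to one or both endpoints of the root edge of $T$, one or both of these blossoms never become closable and are only attached to the auxiliary vertices $L,R$ in the final step of Construction~\ref{PS construction}; moreover, blossoms of $\tau$ that would attach to $L$ or $R$ in $\Xi(\tau,\epsilon)$ instead close onto corners of $w'$ in $\Xi(\tau',\epsilon)$, and one must check that the collapse repairs precisely this discrepancy. Your case split by the position of $v$ among the four children is orthogonal to the case split that actually matters (adjacency of $w'$ to zero, one, or two root-edge endpoints), and the closing claim that ``in every case the local picture at $\{w',b_1,b_2\}$ is the same tripod'' glosses over the part of the argument that occupies most of the paper's proof.
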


\begin{proof}
By Lemma~\ref{growing 4-ary trees to growing blossom}, the blossoming tree $\tau'$ is obtained from $\tau$ by grafting a non-blossom $w$ with two blossom children onto some corner $c$. Let $\alpha,\beta$ be the left and right blossom child of $w$, respectively, and let $e$ be the edge of the triangulation $T=\Xi(\tau',\epsilon)$ that corresponds to $\beta$ according to Construction~\ref{PS construction}. We intend to show that $\Xi(\tau,\epsilon)=\coll(T,e)$.

Let us consider the following separate cases:
\begin{itemize}
\item[a)] $w$ is not a neighbour of an endpoint of the root edge in 	$T$;
\item[b)] $w$ is a neighbour of a single endpoint of the root edge in $T$;
\item[c)] $w$ is a neighbour of both endpoints of the root edge in $T$.
\end{itemize}

Suppose we are in case a). In the triangulation $T$, let $f_1$ and $f_2$ the two triangles adjacent to the edge $e$; we may assume that the edges of $f_1$ are $(w,p(w))$, by which we mean the edge of $\tau'$ joining $w$ to its parent $p(w)$, the edge $e$ and some edge $e_1$, and that the edges of $f_2$ are the edge created by closing the blossom $\alpha$, $e$ and some edge $e_2$. Let $z$ be the common vertex of $e_1$ and $e_2$.

We know that we can interpret $\tau'$ as a spanning tree of $T$, excluding the endpoints of the root edge. We can therefore consider te set of faces of $T$ that lie between $e_1$ and the shortest path in $\tau'$ between $p(w)$ and $z$ (call this set $S_1$), as well as the set of faces between $e_2$ and the shortest path in $\tau'$ joining the two endpoints of $e_2$ (call this set $S_2$). Note that if $e_i$ belongs to $\tau'$ then $S_i$ is empty.

Assuming $S_1$ is non-empty, it is a triangulation of a $k$-gon for some $k$; the diagonals are edges obtained by closing blossoms and the sides of the $k$-gon are edges originally in $\tau'$, save for $e_1$. By starting with triangles containing only one diagonal, one can perform successive closures on $\tau'$ in order to obtain all faces in $S_1$ (and no other faces of $T$). Similarly, one can perform closures of blossoms to create all faces of $S_2$. After $e_1$ and $e_2$ have been created in this way if they were not originally present in $\tau'$, one can close $\beta$ and then $\alpha$. This sequence of closures yields a partial closure $C'$ of $\tau'$ whose faces are the triangles in $S_1\cup S_2\cup\{f_1,f_2\}$, plus the infinite face (see the upper central part of Figure~\ref{fig:case a}). It is possible to perform the exact same closures (other than the last two) in the same order in $\tau$, yielding a partial closure $C$ that can be obtained from $C'$ by collapsing $f_1$ and $f_2$, identifying $w$ with $z$. Note that unclosed blossoms in $C$ and $C'$ correspond to each other and that the same successive closures can now be performed, so that completing Construction~\ref{PS construction} yields exactly the same triangulation but for $f_1$ and $f_2$, or in other words $\Xi(\tau,\epsilon)=\coll(T,e)$.

Now consider case b). We can again define $z, f_1, e_1, e_2$ as before; note that $z$ is \emph{not} an endpoint of the root edge of $T$, as if only one blossom among $\alpha,\beta$ becomes closable before creating the root edge endpoints, this must be the right blossom $\beta$. We can therefore define the set of faces $S_1$ as before, since $p(w)$ and $z$ are both vertices of $\tau'$. Again, close the necessary blossoms of $\tau'$ to obtain the faces of $S_1$, and then close $\beta$, thus yielding a partial closure $C'$ of $\tau'$. Perform the exact same closures on $\tau$ to obtain a map $C$ that differs from $C'$ by the absence of the face $f_1$ (see Figure~\ref{fig:case b}).

Consider now the following set $S_3$ of faces of $T$. Order the edges incident to $w$ clockwise starting with the edge obtained by closing $\alpha$ and let $\eta$ be the \emph{last} edge according to this ordering, which is either $(w,p(w))$ or is obtained by closing a blossom. Let $u$ be the endpoint of $\eta$ that is not $w$, which cannot be an endpoint of the root edge. Let the faces in $S_3$ be those that lie within the cycle formed by $\eta$ and the shortest path in $\tau'$ leading from $u$ to $w$. 

As before, a sequence of closures can be performed on $C'$ to create precisely the faces in $S_3$, thus yielding a map $C'_1$. Such closures can be performed on $C$ in the same order; what they yield is a map $C_1$ that can be constructed from $C'_1$ by collapsing the face $f_1$, identifying $w$ with $z$ and the blossom $\alpha$ with a blossom $\gamma$ issued from $z$, which must exist because otherwise $\alpha$ would be closable in $C'$. It follows that subsequent closures can be performed in $C_1$ and $C_1'$ at the same time, preserving the property that intermediate maps are obtained one from the other by collapsing $f_1$ and eliminating $\alpha$. The final steps of the construction are also the same, with an additional face involving $e$ being formed in $T$ (see the rightmost part of Figure~\ref{fig:case b}).

Case c) is quite similar. We can define $u, \eta, S_3$ exactly as above; letting $S$ be the set of faces of $T$ containing $w$, set $S_4=S_3\setminus S$. There is a sequence of closures of blossoms in $\tau'$ that builds exactly the faces in $S_4$, and such a sequence can be mirrored on $\tau$ to yield a map which only differs by the absence of $w$ and its two blossoms $\alpha, \beta$. Note that the fact that $w$ is adjacent to both root edge endpoints implies the presence of a blossom $\gamma$ immediately to the right of the edge $(w,p(w))$, which does not become closable until the very last phase of the construction. After the sequence of closures performed on $\tau$, there is a series of unclosed blossoms $b_1,\ldots, b_k$ (lower central part of Figure~\ref{fig:case c}) adjacent to the infinite face, starting with two blossoms attached to $u$ and ending with $\gamma$, which are not closable and will not become so until the root edge endpoints have been created. Any further closure performed on $\tau$ does not involve these blossoms and can be replicated on the partial closure of $\tau'$. After performing all possible closures on $\tau$ (and mirroring them on $\tau'$) we are faced with two partial closures $C$ and $C'$ such that
\begin{itemize}
\item $C$ is a triangulation with a (simple) boundary with one or two blossoms hanging off every boundary vertex; the vertex $u$ is adjacent to two unclosed blossoms;
\item $C'$ is exactly the map $C$, with an added edge $(w,p(w))$ hanging off the boundary and blossoms $\alpha,\beta$ attached to $w$.
\end{itemize}

The triangulation $T$ is obtained from $C'$ by first closing $b_2,\ldots, b_{k-1}$, thus creating the faces in $S_3\cap S$ and then adding the endpoints of the root edge and performing the final closures. The triangulation $t=\Xi(\tau,\epsilon)$ is obtained by drawing the endpoints of the root edge and joining them to unclosed blossoms, including $u_1,\ldots, u_k$. The difference between the two is given purely by the presence of the two faces $f_1,f_2$ adjacent to $e$ in $T$, and by the fact that the blossoms $u_2,\ldots,u_k$ create edges to $w$ rather than the appropriate root edge endpoint. Collapsing $f_1,f_2$ in $T$ by identifying the two endpoints of $e$ yields precisely the triangulation $t$ obtained by performing the last construction phase on the partial closure $C$.

\begin{figure}\centering
\begin{tikzpicture}[vertex/.style={circle, inner sep=2pt, fill=black}, blossom/.style={diamond, inner sep=2pt, fill=green!70!black}, branch/.style={very thick, brown}, gem/.style={green!70!black}, scale=.8]
	\begin{pgfonlayer}{nodelayer}
	\clip[use as bounding box] (-3,-0.5) rectangle (3,5.5);
	\node at (0, 5) {$\tau'$};
		\node [style=vertex, label=left:$w$] (0) at (-2, 3.5) {};
		\node [style=vertex, label=right:$p(w)$] (1) at (-2, 2.5) {};
		\node [style=blossom, label=above:$\alpha$] (2) at (-2.25, 4) {};
		\node [style=blossom, label=above:$\beta$] (3) at (-1.75, 4) {};
		\node [style=vertex] (4) at (-2, 1.5) {};
		\node [style=vertex] (5) at (-1, 1.5) {};
		\node [style=vertex] (6) at (0, 2.5) {};
		\node [style=vertex] (7) at (1, 2.5) {};
		\node [style=vertex] (8) at (2, 3.5) {};
		\node [style=vertex] (9) at (2, 2.5) {};
		\node [style=vertex] (10) at (2, 4.5) {};
		\node [style=vertex] (11) at (-2, 0.5) {};
		\node [style=blossom] (12) at (-1.5, 2) {};
		\node [style=blossom] (13) at (-1.5, 3) {};
		\node [style=blossom] (14) at (0.5, 3) {};
		\node [style=blossom] (15) at (1.75, 5) {};
		\node [style=blossom] (16) at (2.25, 5) {};
		\node [style=blossom] (17) at (1.5, 4) {};
		\node [style=blossom] (18) at (2.5, 3.5) {};
		\node [style=blossom] (19) at (2.5, 2.75) {};
		\node [style=blossom] (20) at (2.5, 2.25) {};
		\node [style=blossom] (21) at (0.75, 2) {};
		\node [style=blossom] (22) at (1.25, 2) {};
		\node [style=blossom] (23) at (0, 2) {};
		\node [style=blossom] (24) at (-0.5, 1.5) {};
		\node [style=blossom] (25) at (-0.5, 1) {};
		\node [style=blossom] (26) at (-2, -0) {};
		\node [style=blossom] (27) at (-2.5, 0.5) {};
		\node [style=blossom] (28) at (-2.5, 1.5) {};
		\node [style=blossom] (29) at (-2.5, 2.5) {};
	\end{pgfonlayer}
	\begin{pgfonlayer}{edgelayer}
		\draw [branch] (0) to (1);
		\draw [branch] (1) to (4);
		\draw [branch] (4) to (5);
		\draw [branch] (5) to (6);
		\draw [branch] (6) to (7);
		\draw [branch] (7) to (8);
		\draw [branch] (7) to (9);
		\draw [branch] (10) to (8);
		\draw [branch] (4) to (11);
		\draw [gem] (2) to (0);
		\draw [gem] (3) to (0);
		\draw [gem] (13) to (1);
		\draw [gem] (12) to (4);
		\draw [gem] (14) to (6);
		\draw [gem] (15) to (10);
		\draw [gem] (10) to (16);
		\draw [gem] (17) to (8);
		\draw [gem] (18) to (8);
		\draw [gem] (19) to (9);
		\draw [gem] (20) to (9);
		\draw [gem] (22) to (7);
		\draw [gem] (21) to (7);
		\draw [gem] (6) to (23);
		\draw [gem] (24) to (5);
		\draw [gem] (28) to (4);
		\draw [gem] (29) to (1);
		\draw [gem] (27) to (11);
		\draw [gem] (26) to (11);
		\draw [gem] (5) to (25);
	\end{pgfonlayer}
\end{tikzpicture}
\begin{tikzpicture}[vertex/.style={circle, inner sep=2pt, fill=black}, blossom/.style={diamond, inner sep=2pt, fill=green!70!black}, branch/.style={very thick, brown}, gem/.style={green!70!black}, scale=.8]
	\begin{pgfonlayer}{nodelayer}
	\clip[use as bounding box] (-3,-0.5) rectangle (3,5.5);
		\node [style=vertex, label=left:$w$] (0) at (-2, 3.5) {};
		\node at (-1.5, 2) {$S_1$};
		\node at (-1.2, 2.7) {$e_1$};
		\node at (0.8, 3.1) {$e_2$};
		\node at (1.5, 3) {$S_2$};
		\node at (0, 5) {$C'$};
		\node [style=vertex] (1) at (-2, 2.5) {};
		\node [style=vertex] (4) at (-2, 1.5) {};
		\node [style=vertex] (5) at (-1, 1.5) {};
		\node [style=vertex, label=above:$z$] (6) at (0, 2.5) {};
		\node [style=vertex] (7) at (1, 2.5) {};
		\node [style=vertex] (8) at (2, 3.5) {};
		\node [style=vertex] (9) at (2, 2.5) {};
		\node [style=vertex] (10) at (2, 4.5) {};
		\node [style=vertex] (11) at (-2, 0.5) {};
		\node [style=blossom] (15) at (1.75, 5) {};
		\node [style=blossom] (16) at (2.25, 5) {};
		\node [style=blossom] (17) at (1.5, 4) {};
		\node [style=blossom] (18) at (2.5, 3.5) {};
		\node [style=blossom] (19) at (2.5, 2.75) {};
		\node [style=blossom] (20) at (2.5, 2.25) {};
		\node [style=blossom] (21) at (0.75, 2) {};
		\node [style=blossom] (22) at (1.25, 2) {};
		\node [style=blossom] (23) at (0, 2) {};
		\node [style=blossom] (24) at (-0.5, 1.5) {};
		\node [style=blossom] (25) at (-0.5, 1) {};
		\node [style=blossom] (26) at (-2, -0) {};
		\node [style=blossom] (27) at (-2.5, 0.5) {};
		\node [style=blossom] (28) at (-2.5, 1.5) {};
		\node [style=blossom] (29) at (-2.5, 2.5) {};
	\end{pgfonlayer}
	\begin{pgfonlayer}{edgelayer}
	\fill[gray!30] (1.center)--(4.center)--(5.center)--(6.center)--cycle;
	\fill[gray!30] (6.center)--(7.center)--(8.center)--cycle;
		\draw [branch] (0) to (1);
		\draw [branch] (1) to (4);
		\draw [branch] (4) to (5);
		\draw [branch] (5) to (6);
		\draw [branch] (6) to (7);
		\draw [branch] (7) to (8);
		\draw [branch] (7) to (9);
		\draw [branch] (10) to (8);
		\draw [branch] (4) to (11);
		\draw [red, thick] (6) to (0);
		\draw [red, thick] (8) to (0);
		\draw [thick, gem] (1) to (6);
		\draw [gem] (4) to (6);
		\draw [thick, gem] (8) to (6);
		\draw [gem] (15) to (10);
		\draw [gem] (10) to (16);
		\draw [gem] (17) to (8);
		\draw [gem] (18) to (8);
		\draw [gem] (19) to (9);
		\draw [gem] (20) to (9);
		\draw [gem] (22) to (7);
		\draw [gem] (21) to (7);
		\draw [gem] (6) to (23);
		\draw [gem] (24) to (5);
		\draw [gem] (28) to (4);
		\draw [gem] (29) to (1);
		\draw [gem] (27) to (11);
		\draw [gem] (26) to (11);
		\draw [gem] (5) to (25);
	\end{pgfonlayer}
\end{tikzpicture}	
\begin{tikzpicture}[vertex/.style={circle, inner sep=2pt, fill=black}, blossom/.style={diamond, inner sep=2pt, fill=green!70!black}, branch/.style={very thick, brown}, gem/.style={green!70!black}, scale=.8]
\clip[use as bounding box] (-4,-0.5) rectangle (3,5.5);
		\begin{pgfonlayer}{nodelayer}
		\node [style=vertex] (0) at (-2, 3.5) {};
		\node [style=vertex] (1) at (-2, 2.5) {};
		\node [style=vertex] (2) at (-2, 1.5) {};
		\node [style=vertex] (3) at (-1, 1.5) {};
		\node [style=vertex] (4) at (0, 2.5) {};
		\node [style=vertex] (5) at (1, 2.5) {};
		\node [style=vertex] (6) at (2, 3.5) {};
		\node [style=vertex] (7) at (2, 2.5) {};
		\node [style=vertex] (8) at (2, 4.5) {};
		\node [style=vertex] (9) at (-2, 0.5) {};
		\node [style=blossom] (10) at (1.75, 5) {};
		\node [style=blossom] (11) at (1.75, 4.25) {};
		\node [style=blossom] (12) at (2.5, 2.75) {};
		\node [style=blossom] (13) at (2.5, 2.25) {};
		\node [style=blossom] (14) at (0.75, 2) {};
		\node [style=blossom] (15) at (1.25, 2) {};
		\node [style=blossom] (16) at (0, 2) {};
		\node [style=blossom] (17) at (-0.5, 1.5) {};
		\node [style=blossom] (18) at (-2, -0) {};
	\end{pgfonlayer}
	\begin{pgfonlayer}{edgelayer}
		\draw [branch] (0) to (1);
		\draw [branch] (1) to (2);
		\draw [branch] (2) to (3);
		\draw [branch] (3) to (4);
		\draw [branch] (4) to (5);
		\draw [branch] (5) to (6);
		\draw [branch] (5) to (7);
		\draw [branch] (8) to (6);
		\draw [branch] (2) to (9);
		\draw [gem] (10) to (8);
		\draw [gem] (11) to (6);
		\draw [gem] (12) to (7);
		\draw [gem] (13) to (7);
		\draw [gem] (15) to (5);
		\draw [gem] (14) to (5);
		\draw [gem] (4) to (16);
		\draw [gem] (17) to (3);
		\draw [gem] (2) to (4);
		\draw [gem] (1) to (4);
		\draw [thick, red] (0) to (4);
		\draw [gem] (4) to (6);
		\draw [thick, red] (0) to (6);
		\draw [style=gem, in=165, out=120, looseness=1.75] (1) to (6);
		\draw [style=gem, in=135, out=132, looseness=2.25] (2) to (6);
		\draw [style=gem, in=120, out=135, looseness=2.75] (9) to (6);
		\draw [gem] (18) to (9);
		\draw [style=gem] (3) to (9);
		\draw [style=gem, bend right, looseness=1.25] (6) to (7);
		\draw [style=gem, bend left=45, looseness=1.00] (8) to (7);
	\end{pgfonlayer}
	\end{tikzpicture}\\
	\begin{tikzpicture}[vertex/.style={circle, inner sep=2pt, fill=black}, blossom/.style={diamond, inner sep=2pt, fill=green!70!black}, branch/.style={very thick, brown}, gem/.style={green!70!black}, scale=.8]
	\clip[use as bounding box] (-3,-0.5) rectangle (3,5.5);
	\begin{pgfonlayer}{nodelayer}
	\node at (0, 5) {$\tau$};
	\node[red] at (-2, 3.1) {$c$};
		\node [style=vertex] (1) at (-2, 2.5) {};
		\node [style=vertex] (4) at (-2, 1.5) {};
		\node [style=vertex] (5) at (-1, 1.5) {};
		\node [style=vertex] (6) at (0, 2.5) {};
		\node [style=vertex] (7) at (1, 2.5) {};
		\node [style=vertex] (8) at (2, 3.5) {};
		\node [style=vertex] (9) at (2, 2.5) {};
		\node [style=vertex] (10) at (2, 4.5) {};
		\node [style=vertex] (11) at (-2, 0.5) {};
		\node [style=blossom] (12) at (-1.5, 2) {};
		\node [style=blossom] (13) at (-1.5, 3) {};
		\node [style=blossom] (14) at (0.5, 3) {};
		\node [style=blossom] (15) at (1.75, 5) {};
		\node [style=blossom] (16) at (2.25, 5) {};
		\node [style=blossom] (17) at (1.5, 4) {};
		\node [style=blossom] (18) at (2.5, 3.5) {};
		\node [style=blossom] (19) at (2.5, 2.75) {};
		\node [style=blossom] (20) at (2.5, 2.25) {};
		\node [style=blossom] (21) at (0.75, 2) {};
		\node [style=blossom] (22) at (1.25, 2) {};
		\node [style=blossom] (23) at (0, 2) {};
		\node [style=blossom] (24) at (-0.5, 1.5) {};
		\node [style=blossom] (25) at (-0.5, 1) {};
		\node [style=blossom] (26) at (-2, -0) {};
		\node [style=blossom] (27) at (-2.5, 0.5) {};
		\node [style=blossom] (28) at (-2.5, 1.5) {};
		\node [style=blossom] (29) at (-2.5, 2.5) {};
	\end{pgfonlayer}
	\begin{pgfonlayer}{edgelayer}
	\begin{scope}
	\clip (13.center) to (1.center) to (29.center) [bend left] to (13.center);
	\fill[red!20, draw=red] (-2,2.5) circle (8pt);
	\end{scope}
		\draw [branch] (1) to (4);
		\draw [branch] (4) to (5);
		\draw [branch] (5) to (6);
		\draw [branch] (6) to (7);
		\draw [branch] (7) to (8);
		\draw [branch] (7) to (9);
		\draw [branch] (10) to (8);
		\draw [branch] (4) to (11);
		\draw [gem] (13) to (1);
		\draw [gem] (12) to (4);
		\draw [gem] (14) to (6);
		\draw [gem] (15) to (10);
		\draw [gem] (10) to (16);
		\draw [gem] (17) to (8);
		\draw [gem] (18) to (8);
		\draw [gem] (19) to (9);
		\draw [gem] (20) to (9);
		\draw [gem] (22) to (7);
		\draw [gem] (21) to (7);
		\draw [gem] (6) to (23);
		\draw [gem] (24) to (5);
		\draw [gem] (28) to (4);
		\draw [gem] (29) to (1);
		\draw [gem] (27) to (11);
		\draw [gem] (26) to (11);
		\draw [gem] (5) to (25);
	\end{pgfonlayer}
\end{tikzpicture}
\begin{tikzpicture}[vertex/.style={circle, inner sep=2pt, fill=black}, blossom/.style={diamond, inner sep=2pt, fill=green!70!black}, branch/.style={very thick, brown}, gem/.style={green!70!black}, scale=.8]
	\clip[use as bounding box] (-3,-0.5) rectangle (3,5.5);
	\begin{pgfonlayer}{nodelayer}
	\node at (0, 5) {$C$};
		\node [style=vertex] (1) at (-2, 2.5) {};
		\node [style=vertex] (4) at (-2, 1.5) {};
		\node [style=vertex] (5) at (-1, 1.5) {};
		\node [style=vertex] (6) at (0, 2.5) {};
		\node [style=vertex] (7) at (1, 2.5) {};
		\node [style=vertex] (8) at (2, 3.5) {};
		\node [style=vertex] (9) at (2, 2.5) {};
		\node [style=vertex] (10) at (2, 4.5) {};
		\node [style=vertex] (11) at (-2, 0.5) {};
		\node [style=blossom] (15) at (1.75, 5) {};
		\node [style=blossom] (16) at (2.25, 5) {};
		\node [style=blossom] (17) at (1.5, 4) {};
		\node [style=blossom] (18) at (2.5, 3.5) {};
		\node [style=blossom] (19) at (2.5, 2.75) {};
		\node [style=blossom] (20) at (2.5, 2.25) {};
		\node [style=blossom] (21) at (0.75, 2) {};
		\node [style=blossom] (22) at (1.25, 2) {};
		\node [style=blossom] (23) at (0, 2) {};
		\node [style=blossom] (24) at (-0.5, 1.5) {};
		\node [style=blossom] (25) at (-0.5, 1) {};
		\node [style=blossom] (26) at (-2, -0) {};
		\node [style=blossom] (27) at (-2.5, 0.5) {};
		\node [style=blossom] (28) at (-2.5, 1.5) {};
		\node [style=blossom] (29) at (-2.5, 2.5) {};
	\end{pgfonlayer}
	\begin{pgfonlayer}{edgelayer}
	\fill[gray!30] (1.center)--(4.center)--(5.center)--(6.center)--cycle;
	\fill[gray!30] (6.center)--(7.center)--(8.center)--cycle;
		\draw [branch] (1) to (4);
		\draw [branch] (4) to (5);
		\draw [branch] (5) to (6);
		\draw [branch] (6) to (7);
		\draw [branch] (7) to (8);
		\draw [branch] (7) to (9);
		\draw [branch] (10) to (8);
		\draw [branch] (4) to (11);
		\draw [thick, gem] (1) to (6);
		\draw [gem] (4) to (6);
		\draw [thick, gem] (8) to (6);
		\draw [gem] (15) to (10);
		\draw [gem] (10) to (16);
		\draw [gem] (17) to (8);
		\draw [gem] (18) to (8);
		\draw [gem] (19) to (9);
		\draw [gem] (20) to (9);
		\draw [gem] (22) to (7);
		\draw [gem] (21) to (7);
		\draw [gem] (6) to (23);
		\draw [gem] (24) to (5);
		\draw [gem] (28) to (4);
		\draw [gem] (29) to (1);
		\draw [gem] (27) to (11);
		\draw [gem] (26) to (11);
		\draw [gem] (5) to (25);
	\end{pgfonlayer}
\end{tikzpicture}	
\begin{tikzpicture}[vertex/.style={circle, inner sep=2pt, fill=black}, blossom/.style={diamond, inner sep=2pt, fill=green!70!black}, branch/.style={very thick, brown}, gem/.style={green!70!black}, scale=.8]
		\clip[use as bounding box] (-3.5,-0.5) rectangle (3,5.5);
		\begin{pgfonlayer}{nodelayer}
		\node [style=vertex] (1) at (-2, 2.5) {};
		\node [style=vertex] (2) at (-2, 1.5) {};
		\node [style=vertex] (3) at (-1, 1.5) {};
		\node [style=vertex] (4) at (0, 2.5) {};
		\node [style=vertex] (5) at (1, 2.5) {};
		\node [style=vertex] (6) at (2, 3.5) {};
		\node [style=vertex] (7) at (2, 2.5) {};
		\node [style=vertex] (8) at (2, 4.5) {};
		\node [style=vertex] (9) at (-2, 0.5) {};
		\node [style=blossom] (10) at (1.75, 5) {};
		\node [style=blossom] (11) at (1.75, 4.25) {};
		\node [style=blossom] (12) at (2.5, 2.75) {};
		\node [style=blossom] (13) at (2.5, 2.25) {};
		\node [style=blossom] (14) at (0.75, 2) {};
		\node [style=blossom] (15) at (1.25, 2) {};
		\node [style=blossom] (16) at (0, 2) {};
		\node [style=blossom] (17) at (-0.5, 1.5) {};
		\node [style=blossom] (18) at (-2, -0) {};
	\end{pgfonlayer}
	\begin{pgfonlayer}{edgelayer}
		\draw [branch] (1) to (2);
		\draw [branch] (2) to (3);
		\draw [branch] (3) to (4);
		\draw [branch] (4) to (5);
		\draw [branch] (5) to (6);
		\draw [branch] (5) to (7);
		\draw [branch] (8) to (6);
		\draw [branch] (2) to (9);
		\draw [gem] (10) to (8);
		\draw [gem] (11) to (6);
		\draw [gem] (12) to (7);
		\draw [gem] (13) to (7);
		\draw [gem] (15) to (5);
		\draw [gem] (14) to (5);
		\draw [gem] (4) to (16);
		\draw [gem] (17) to (3);
		\draw [gem] (2) to (4);
		\draw [gem] (1) to (4);
		\draw [gem] (4) to (6);
		\draw [style=gem, in=165, out=120, looseness=1.75] (1) to (6);
		\draw [style=gem, in=135, out=132, looseness=2.25] (2) to (6);
		\draw [style=gem, in=120, out=135, looseness=2.75] (9) to (6);
		\draw [gem] (18) to (9);
		\draw [style=gem] (3) to (9);
		\draw [style=gem, bend right, looseness=1.25] (6) to (7);
		\draw [style=gem, bend left=45, looseness=1.00] (8) to (7);
	\end{pgfonlayer}
	\end{tikzpicture}
	\caption{\label{fig:case a}Case a): the vertex $w$ is grafted onto the corner $c$ of $\tau$ to obtain $\tau'$. One can perform three closures of blossoms to obtain the three triangular faces of $S_1\cup S_2$ (shaded); when performed on $\tau$, such closures yield $C$ (below); when performed on $\tau'$ and followed by the closures of $\alpha$ an $\beta$, they yield $C'$ (above). Since $C$ differs from $C'$ by the collapse of the two faces adjacent to $e$ (and the identification of $w$ with $z$), subsequent closures can be performed in the same way above and below, yielding the result on the right, where the contour of the infinite face is exactly the same.}
\end{figure}
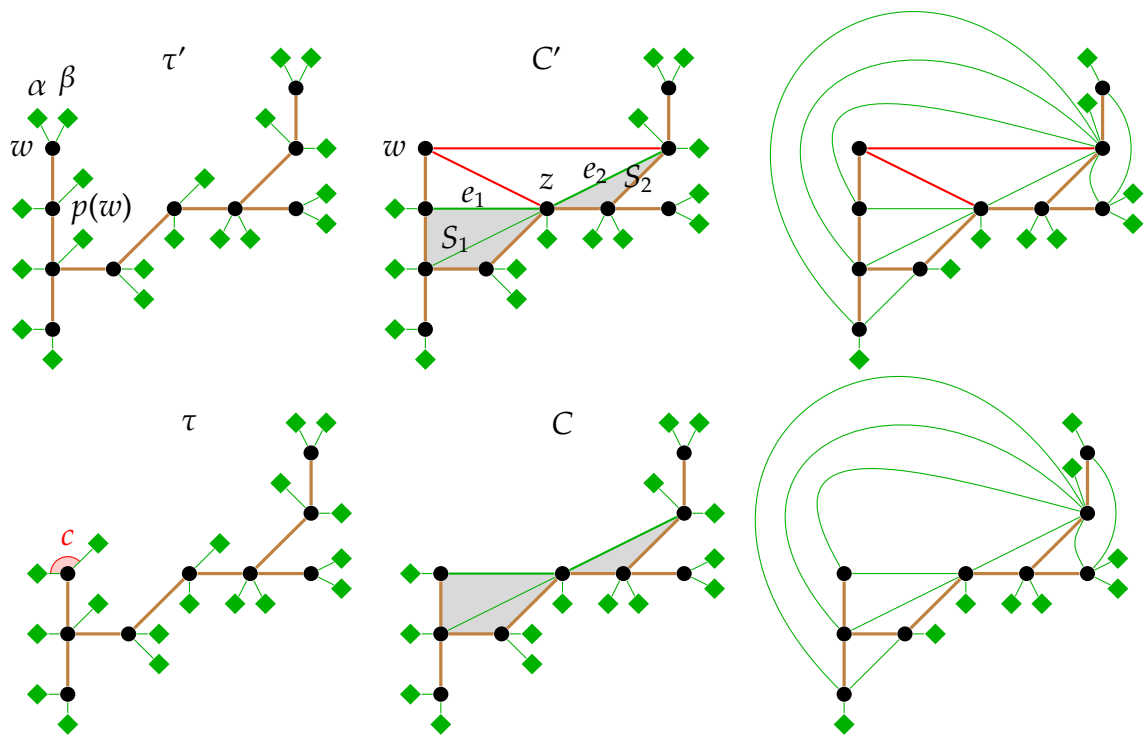

\begin{figure}\centering
\begin{tikzpicture}[vertex/.style={circle, inner sep=2pt, fill=black}, blossom/.style={diamond, inner sep=2pt, fill=green!70!black}, branch/.style={very thick, brown}, gem/.style={green!70!black}, scale=.6]
	\begin{pgfonlayer}{nodelayer}
	\node at (-2.5,4.5) {$\tau'$};
		\node [style=vertex, label=left:$w$] (0) at (-2, 3) {};
		\node [style=vertex] (1) at (-2, 2) {};
		\node [style=blossom] (2) at (-2.25, 3.5) {};
		\node [style=blossom] (3) at (-1.75, 3.5) {};
		\node [style=vertex] (4) at (-2.75, -0) {};
		\node [style=vertex] (5) at (-4, 1) {};
		\node [style=vertex] (6) at (-4.75, 2) {};
		\node [style=vertex] (7) at (0, 2) {};
		\node [style=blossom] (8) at (-3.75, 1.5) {};
		\node [style=blossom] (9) at (-4.25, 2) {};
		\node [style=blossom] (10) at (-4.25, 2.5) {};
		\node [style=vertex] (11) at (-4.75, 3) {};
		\node [style=blossom] (12) at (-4.75, 3.5) {};
		\node [style=vertex] (13) at (0, 3) {};
		\node [style=blossom] (14) at (-1.5, 2.5) {};
		\node [style=blossom] (15) at (-5.25, 3.5) {};
		\node [style=blossom] (16) at (-4.25, 0.5) {};
		\node [style=blossom] (17) at (-2.75, -0.5) {};
		\node [style=blossom] (18) at (-2.25, 0.25) {};
		\node [style=blossom] (19) at (-2, 1.5) {};
		\node [style=blossom] (20) at (0, 1.5) {};
		\node [style=blossom] (21) at (0.5, 1.75) {};
		\node [style=blossom] (22) at (0.25, 3.5) {};
		\node [style=blossom] (23) at (-0.25, 3.5) {};
	\end{pgfonlayer}
	\begin{pgfonlayer}{edgelayer}
		\draw [style=branch] (0) to (1);
		\draw [style=gem] (2) to (0);
		\draw [style=gem] (3) to (0);
		\draw [style=branch] (4) to (1);
		\draw [style=branch] (5) to (4);
		\draw [style=branch] (6) to (5);
		\draw [style=branch] (11) to (6);
		\draw [style=gem] (10) to (6);
		\draw [style=gem] (6) to (9);
		\draw [style=gem] (8) to (5);
		\draw [style=gem] (12) to (11);
		\draw [style=branch] (1) to (7);
		\draw [style=branch] (13) to (7);
		\draw [style=gem] (14) to (1);
		\draw [style=gem] (15) to (11);
		\draw [style=gem] (5) to (16);
		\draw [style=gem] (4) to (17);
		\draw [style=gem] (4) to (18);
		\draw [style=gem] (1) to (19);
		\draw [style=gem] (7) to (20);
		\draw [style=gem] (7) to (21);
		\draw [style=gem] (23) to (13);
		\draw [style=gem] (22) to (13);
	\end{pgfonlayer}
\end{tikzpicture}
\begin{tikzpicture}[vertex/.style={circle, inner sep=2pt, fill=black}, blossom/.style={diamond, inner sep=2pt, fill=green!70!black}, branch/.style={very thick, brown}, gem/.style={green!70!black}, scale=.6]
	\begin{pgfonlayer}{nodelayer}
	\node at (-2.5,4.5) {$C'$};
	\node at (-1.2,2.5) {$e_1$};
	\node at (-0.4,2.4) {$S_1$};
		\node [style=vertex,label=left:$w$] (0) at (-2, 3) {};
		\node [style=vertex] (1) at (-2, 2) {};
		\node [style=blossom] (2) at (-2.25, 3.5) {};
		\node [style=vertex] (3) at (-2.75, -0) {};
		\node [style=vertex] (4) at (-4, 1) {};
		\node [style=vertex] (5) at (-4.75, 2) {};
		\node [style=vertex] (6) at (0, 2) {};
		\node [style=blossom] (7) at (-3.75, 1.5) {};
		\node [style=blossom] (8) at (-4.25, 2) {};
		\node [style=blossom] (9) at (-4.25, 2.5) {};
		\node [style=vertex] (10) at (-4.75, 3) {};
		\node [style=blossom] (11) at (-4.75, 3.5) {};
		\node [style=vertex,label=right:$z$] (12) at (0, 3) {};
		\node [style=blossom] (13) at (-5.25, 3.5) {};
		\node [style=blossom] (14) at (-4.25, 0.5) {};
		\node [style=blossom] (15) at (-2.75, -0.5) {};
		\node [style=blossom] (16) at (-2.25, 0.25) {};
		\node [style=blossom] (17) at (-2, 1.5) {};
		\node [style=blossom] (18) at (0, 1.5) {};
		\node [style=blossom] (19) at (0.5, 1.75) {};
		\node [style=blossom] (20) at (0.25, 3.5) {};
		\node [style=blossom] (21) at (-0.25, 3.5) {};
	\end{pgfonlayer}
	\begin{pgfonlayer}{edgelayer}
	\fill[gray!20] (12.center)--(6.center)--(1.center)--cycle;
		\draw [style=branch] (0) to (1);
		\draw [style=gem] (2) to (0);
		\draw [style=branch] (3) to (1);
		\draw [style=branch] (4) to (3);
		\draw [style=branch] (5) to (4);
		\draw [style=branch] (10) to (5);
		\draw [style=gem] (9) to (5);
		\draw [style=gem] (5) to (8);
		\draw [style=gem] (7) to (4);
		\draw [style=gem] (11) to (10);
		\draw [style=branch] (1) to (6);
		\draw [style=branch] (12) to (6);
		\draw [style=gem] (13) to (10);
		\draw [style=gem] (4) to (14);
		\draw [style=gem] (3) to (15);
		\draw [style=gem] (3) to (16);
		\draw [style=gem] (1) to (17);
		\draw [style=gem] (6) to (18);
		\draw [style=gem] (6) to (19);
		\draw [style=gem] (21) to (12);
		\draw [style=gem] (20) to (12);
		\draw [style=gem] (1) to (12);
		\draw [red, thick] (0) to (12);
	\end{pgfonlayer}
\end{tikzpicture}
\begin{tikzpicture}[vertex/.style={circle, inner sep=2pt, fill=black}, blossom/.style={diamond, inner sep=2pt, fill=green!70!black}, branch/.style={very thick, brown}, gem/.style={green!70!black}, scale=.6]
	\begin{pgfonlayer}{nodelayer}
	\node at (-2.5,4.5) {$C'_1$};
	\node at (-3.5,3.2) {$\eta$};
	\node at (-3.5,2.2) {$S_3$};
	\node at (-1.3,2.7) {$f_1$};
		\node [style=vertex] (0) at (-2, 3) {};
		\node [style=vertex] (1) at (-2, 2) {};
		\node [style=blossom] (2) at (-2.25, 3.5) {};
		\node [style=vertex] (3) at (-2.75, -0) {};
		\node [style=vertex] (4) at (-4, 1) {};
		\node [style=vertex] (5) at (-4.75, 2) {};
		\node [style=vertex] (6) at (0, 2) {};
		\node [style=vertex] (7) at (-4.75, 3) {};
		\node [style=vertex] (8) at (0, 3) {};
		\node [style=blossom] (9) at (-5.25, 3.5) {};
		\node [style=blossom] (10) at (-4.25, 0.5) {};
		\node [style=blossom] (11) at (-2.75, -0.5) {};
		\node [style=blossom] (12) at (-2.25, 0.25) {};
		\node [style=blossom] (13) at (-2, 1.5) {};
		\node [style=blossom] (14) at (0, 1.5) {};
		\node [style=blossom] (15) at (0.5, 1.75) {};
		\node [style=blossom] (16) at (0.25, 3.5) {};
		\node [style=blossom] (17) at (-0.25, 3.5) {};
	\end{pgfonlayer}
	\begin{pgfonlayer}{edgelayer}
	\fill[gray!20] (0.center)--(1.center)--(3.center)--(4.center)--(5.center)--(7.center)--cycle;
		\draw [style=branch] (0) to (1);
		\draw [style=gem] (2) to (0);
		\draw [style=branch] (3) to (1);
		\draw [style=branch] (4) to (3);
		\draw [style=branch] (5) to (4);
		\draw [style=branch] (7) to (5);
		\draw [style=branch] (1) to (6);
		\draw [style=branch] (8) to (6);
		\draw [style=gem] (9) to (7);
		\draw [style=gem] (4) to (10);
		\draw [style=gem] (3) to (11);
		\draw [style=gem] (3) to (12);
		\draw [style=gem] (1) to (13);
		\draw [style=gem] (6) to (14);
		\draw [style=gem] (6) to (15);
		\draw [style=gem] (17) to (8);
		\draw [style=gem] (16) to (8);
		\draw [style=gem] (1) to (8);
		\draw [style=gem, red, thick] (0) to (8);
		\draw [style=gem] (4) to (1);
		\draw [style=gem] (5) to (1);
		\draw [style=gem, red] (5) to (0);
		\draw [style=gem, red] (7) to (0);
	\end{pgfonlayer}
\end{tikzpicture}
\begin{tikzpicture}[vertex/.style={circle, inner sep=2pt, fill=black}, blossom/.style={diamond, inner sep=2pt, fill=green!70!black}, branch/.style={very thick, brown}, gem/.style={green!70!black}, scale=.6]
	\begin{pgfonlayer}{nodelayer}
	\node at (-2.5,4.5) {$T$};
	\clip[use as bounding box] (-5.5,-0.2) rectangle (2.2,4.5);
		\node [style=vertex] (0) at (-2, 3) {};
		\node [style=vertex] (1) at (-2, 2) {};
		\node [style=vertex] (2) at (-2.75, -0) {};
		\node [style=vertex] (3) at (-4, 1) {};
		\node [style=vertex] (4) at (-4.75, 2) {};
		\node [style=vertex] (5) at (0, 2) {};
		\node [style=vertex] (6) at (-4.75, 3) {};
		\node [style=vertex] (7) at (0, 3) {};
		\node [style=vertex] (8) at (1, 2.5) {};
		\node [style=vertex] (9) at (2, 2.5) {};
	\end{pgfonlayer}
	\begin{pgfonlayer}{edgelayer}
		\draw [style=branch] (0) to (1);
		\draw [style=branch] (2) to (1);
		\draw [style=branch] (3) to (2);
		\draw [style=branch] (4) to (3);
		\draw [style=branch] (6) to (4);
		\draw [style=branch] (1) to (5);
		\draw [style=branch] (7) to (5);
		\draw [style=gem] (1) to (7);
		\draw [style=gem, red, thick] (0) to (7);
		\draw [style=gem] (3) to (1);
		\draw [style=gem] (4) to (1);
		\draw [style=gem, red] (4) to (0);
		\draw [style=gem, red] (6) to (0);
		\draw [style=gem, bend right=60, looseness=1.25] (6) to (3);
		\draw [style=gem, bend right=90, looseness=1.25] (6) to (2);
		\draw [style=gem] (7) to (8);
		\draw [style=gem] (8) to (5);
		\draw [style=gem] (7) to (9);
		\draw [style=gem] (5) to (9);
		\draw [style=gem, bend right] (1) to (9);
		\draw [style=gem, bend left, red] (0) to (9);
		\draw [style=gem, bend left=40] (6) to (9);
		\draw [style=gem, bend right] (2) to (9);
		\draw[->, very thick] (8) to (9);
	\end{pgfonlayer}
\end{tikzpicture}\\[20pt]
\begin{tikzpicture}[vertex/.style={circle, inner sep=2pt, fill=black}, blossom/.style={diamond, inner sep=2pt, fill=green!70!black}, branch/.style={very thick, brown}, gem/.style={green!70!black}, scale=.6]
	\begin{pgfonlayer}{nodelayer}
	\node at (-2.5,4.5) {$\tau$};
		\node [style=vertex] (1) at (-2, 2) {};
		\node[red] at (-2.2,2.7) {$c$};
		\node [style=vertex] (4) at (-2.75, -0) {};
		\node [style=vertex] (5) at (-4, 1) {};
		\node [style=vertex] (6) at (-4.75, 2) {};
		\node [style=vertex] (7) at (0, 2) {};
		\node [style=blossom] (8) at (-3.75, 1.5) {};
		\node [style=blossom] (9) at (-4.25, 2) {};
		\node [style=blossom] (10) at (-4.25, 2.5) {};
		\node [style=vertex] (11) at (-4.75, 3) {};
		\node [style=blossom] (12) at (-4.75, 3.5) {};
		\node [style=vertex] (13) at (0, 3) {};
		\node [style=blossom] (14) at (-1.5, 2.5) {};
		\node [style=blossom] (15) at (-5.25, 3.5) {};
		\node [style=blossom] (16) at (-4.25, 0.5) {};
		\node [style=blossom] (17) at (-2.75, -0.5) {};
		\node [style=blossom] (18) at (-2.25, 0.25) {};
		\node [style=blossom] (19) at (-2, 1.5) {};
		\node [style=blossom] (20) at (0, 1.5) {};
		\node [style=blossom] (21) at (0.5, 1.75) {};
		\node [style=blossom] (22) at (0.25, 3.5) {};
		\node [style=blossom] (23) at (-0.25, 3.5) {};
	\end{pgfonlayer}
	\begin{pgfonlayer}{edgelayer}
		\begin{scope}
	\clip (4.center) to (1.center) to (14.center) [bend right=90] to (4.center);
	\fill[red!20, draw=red] (-2,2) circle (11pt);
	\end{scope}
		\draw [style=branch] (4) to (1);
		\draw [style=branch] (5) to (4);
		\draw [style=branch] (6) to (5);
		\draw [style=branch] (11) to (6);
		\draw [style=gem] (10) to (6);
		\draw [style=gem] (6) to (9);
		\draw [style=gem] (8) to (5);
		\draw [style=gem] (12) to (11);
		\draw [style=branch] (1) to (7);
		\draw [style=branch] (13) to (7);
		\draw [style=gem] (14) to (1);
		\draw [style=gem] (15) to (11);
		\draw [style=gem] (5) to (16);
		\draw [style=gem] (4) to (17);
		\draw [style=gem] (4) to (18);
		\draw [style=gem] (1) to (19);
		\draw [style=gem] (7) to (20);
		\draw [style=gem] (7) to (21);
		\draw [style=gem] (23) to (13);
		\draw [style=gem] (22) to (13);
	\end{pgfonlayer}
\end{tikzpicture}
\begin{tikzpicture}[vertex/.style={circle, inner sep=2pt, fill=black}, blossom/.style={diamond, inner sep=2pt, fill=green!70!black}, branch/.style={very thick, brown}, gem/.style={green!70!black}, scale=.6]
	\begin{pgfonlayer}{nodelayer}
	\node at (-2.5,4.5) {$C$};
	\node at (-1.2,2.5) {$e_1$};
	\node at (-0.4,2.4) {$S_1$};
		\node [style=vertex] (1) at (-2, 2) {};
		\node [style=vertex] (3) at (-2.75, -0) {};
		\node [style=vertex] (4) at (-4, 1) {};
		\node [style=vertex] (5) at (-4.75, 2) {};
		\node [style=vertex] (6) at (0, 2) {};
		\node [style=blossom] (7) at (-3.75, 1.5) {};
		\node [style=blossom] (8) at (-4.25, 2) {};
		\node [style=blossom] (9) at (-4.25, 2.5) {};
		\node [style=vertex] (10) at (-4.75, 3) {};
		\node [style=blossom] (11) at (-4.75, 3.5) {};
		\node [style=vertex,label=right:$z$] (12) at (0, 3) {};
		\node [style=blossom] (13) at (-5.25, 3.5) {};
		\node [style=blossom] (14) at (-4.25, 0.5) {};
		\node [style=blossom] (15) at (-2.75, -0.5) {};
		\node [style=blossom] (16) at (-2.25, 0.25) {};
		\node [style=blossom] (17) at (-2, 1.5) {};
		\node [style=blossom] (18) at (0, 1.5) {};
		\node [style=blossom] (19) at (0.5, 1.75) {};
		\node [style=blossom] (20) at (0.25, 3.5) {};
		\node [style=blossom] (21) at (-0.25, 3.5) {};
	\end{pgfonlayer}
	\begin{pgfonlayer}{edgelayer}
		\fill[gray!20] (12.center)--(6.center)--(1.center)--cycle;
		\draw [style=branch] (3) to (1);
		\draw [style=branch] (4) to (3);
		\draw [style=branch] (5) to (4);
		\draw [style=branch] (10) to (5);
		\draw [style=gem] (9) to (5);
		\draw [style=gem] (5) to (8);
		\draw [style=gem] (7) to (4);
		\draw [style=gem] (11) to (10);
		\draw [style=branch] (1) to (6);
		\draw [style=branch] (12) to (6);
		\draw [style=gem] (13) to (10);
		\draw [style=gem] (4) to (14);
		\draw [style=gem] (3) to (15);
		\draw [style=gem] (3) to (16);
		\draw [style=gem] (1) to (17);
		\draw [style=gem] (6) to (18);
		\draw [style=gem] (6) to (19);
		\draw [style=gem] (21) to (12);
		\draw [style=gem] (20) to (12);
		\draw [style=gem] (1) to (12);
	\end{pgfonlayer}
\end{tikzpicture}
\begin{tikzpicture}[vertex/.style={circle, inner sep=2pt, fill=black}, blossom/.style={diamond, inner sep=2pt, fill=green!70!black}, branch/.style={very thick, brown}, gem/.style={green!70!black}, scale=.6]
	\begin{pgfonlayer}{nodelayer}
	\node at (-2.5,4.5) {$C_1$};
	\node at (-3.5,2.5) {$S_3$};
		\node [style=vertex] (1) at (-2, 2) {};
		\node [style=vertex] (3) at (-2.75, -0) {};
		\node [style=vertex] (4) at (-4, 1) {};
		\node [style=vertex] (5) at (-4.75, 2) {};
		\node [style=vertex] (6) at (0, 2) {};
		\node [style=vertex] (7) at (-4.75, 3) {};
		\node [style=vertex] (8) at (0, 3) {};
		\node [style=blossom] (9) at (-5.25, 3.5) {};
		\node [style=blossom] (10) at (-4.25, 0.5) {};
		\node [style=blossom] (11) at (-2.75, -0.5) {};
		\node [style=blossom] (12) at (-2.25, 0.25) {};
		\node [style=blossom] (13) at (-2, 1.5) {};
		\node [style=blossom] (14) at (0, 1.5) {};
		\node [style=blossom] (15) at (0.5, 1.75) {};
		\node [style=blossom] (16) at (0.25, 3.5) {};
		\node [style=blossom] (17) at (-0.25, 3.5) {};
	\end{pgfonlayer}
	\begin{pgfonlayer}{edgelayer}
	\fill[gray!20] (1.center)--(3.center)--(4.center)--(5.center)--(7.center)--(8.center)--cycle;
		\draw [style=branch] (3) to (1);
		\draw [style=branch] (4) to (3);
		\draw [style=branch] (5) to (4);
		\draw [style=branch] (7) to (5);
		\draw [style=branch] (1) to (6);
		\draw [style=branch] (8) to (6);
		\draw [style=gem] (9) to (7);
		\draw [style=gem] (4) to (10);
		\draw [style=gem] (3) to (11);
		\draw [style=gem] (3) to (12);
		\draw [style=gem] (1) to (13);
		\draw [style=gem] (6) to (14);
		\draw [style=gem] (6) to (15);
		\draw [style=gem] (17) to (8);
		\draw [style=gem] (16) to (8);
		\draw [style=gem] (1) to (8);
		\draw [style=gem] (4) to (1);
		\draw [style=gem] (5) to (1);
		\draw [style=gem, red] (5) to (8);
		\draw [style=gem, red] (7) to (8);
	\end{pgfonlayer}
\end{tikzpicture}
\begin{tikzpicture}[vertex/.style={circle, inner sep=2pt, fill=black}, blossom/.style={diamond, inner sep=2pt, fill=green!70!black}, branch/.style={very thick, brown}, gem/.style={green!70!black}, scale=.6]
	\begin{pgfonlayer}{nodelayer}
	\node at (-2.5,4.5) {$t$};
	\clip[use as bounding box] (-5.5,-0.2) rectangle (2.2,4.5);
		\node [style=vertex] (1) at (-2, 2) {};
		\node [style=vertex] (2) at (-2.75, -0) {};
		\node [style=vertex] (3) at (-4, 1) {};
		\node [style=vertex] (4) at (-4.75, 2) {};
		\node [style=vertex] (5) at (0, 2) {};
		\node [style=vertex] (6) at (-4.75, 3) {};
		\node [style=vertex] (7) at (0, 3) {};
		\node [style=vertex] (8) at (1, 2.5) {};
		\node [style=vertex] (9) at (2, 2.5) {};
	\end{pgfonlayer}
	\begin{pgfonlayer}{edgelayer}
		\draw [style=branch] (2) to (1);
		\draw [style=branch] (3) to (2);
		\draw [style=branch] (4) to (3);
		\draw [style=branch] (6) to (4);
		\draw [style=branch] (1) to (5);
		\draw [style=branch] (7) to (5);
		\draw [style=gem] (1) to (7);
		\draw [style=gem] (3) to (1);
		\draw [style=gem] (4) to (1);
		\draw [style=gem, red] (4) to (7);
		\draw [style=gem, red] (6) to (7);
		\draw [style=gem, bend right=60, looseness=1.25] (6) to (3);
		\draw [style=gem, bend right=90, looseness=1.25] (6) to (2);
		\draw [style=gem] (7) to (8);
		\draw [style=gem] (8) to (5);
		\draw [style=gem] (7) to (9);
		\draw [style=gem] (5) to (9);
		\draw [style=gem, bend right] (1) to (9);
		\draw [style=gem, bend left=40] (6) to (9);
		\draw [style=gem, bend right] (2) to (9);
		\draw[->, very thick] (8) to (9);
	\end{pgfonlayer}
\end{tikzpicture}
\caption{\label{fig:case b}Case b): the blossoming tree $\tau'$ is obtained by grafting $w$ onto the corner $c$, and the vertex $w$ is adjacent to a single endpoint of the root edge in $T=\Xi(\tau',\epsilon)$. The intermediate maps $C'$ and $C'_1$ are obtained by first closing the blossom that creates $e_1$, then the right blossom $\beta$ of $w$, and then four blossoms, two of which become edges with $w$ as an endpoint. The corresponding closure operations performed on $\tau$ (when possible) yield the intermediate maps $C$ and $C_1$. Finally, we obtain $T$ and $t=\coll(T,e)$.}
\end{figure}
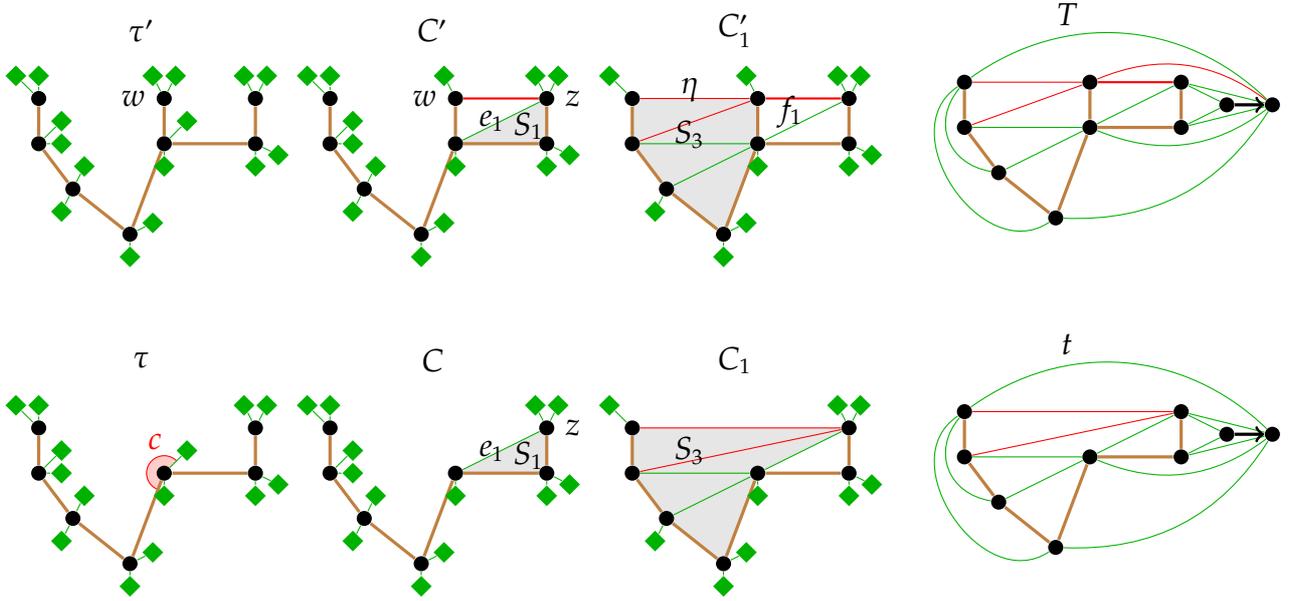

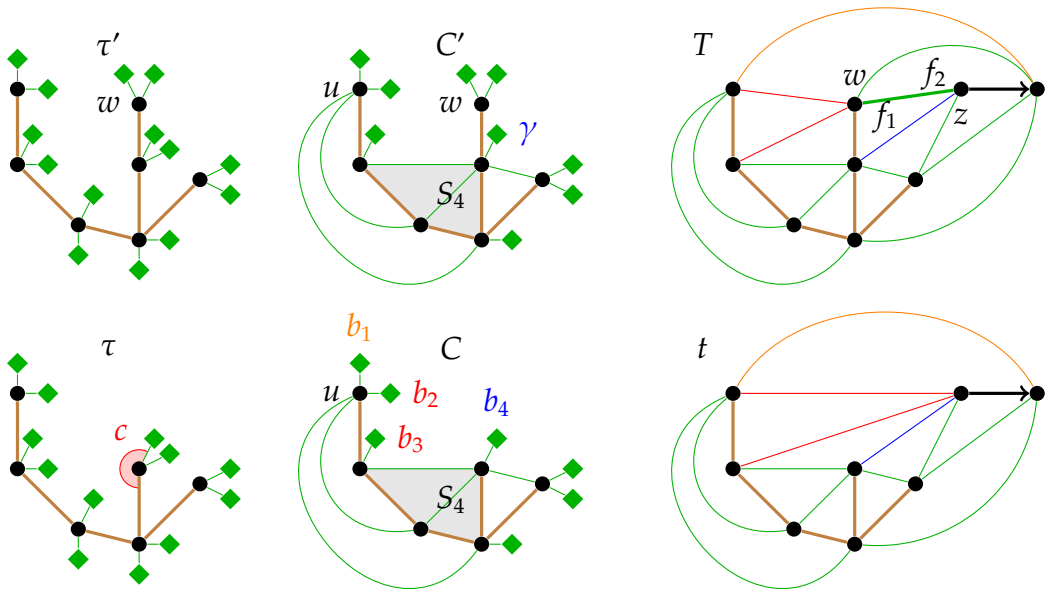
\begin{figure}\centering
\begin{tikzpicture}[vertex/.style={circle, inner sep=2pt, fill=black}, blossom/.style={diamond, inner sep=2pt, fill=green!70!black}, branch/.style={very thick, brown}, gem/.style={green!70!black}, scale=.8]
\clip[use as bounding box] (-2.5,0) rectangle (2,4);
	\begin{pgfonlayer}{nodelayer}
	\node at (-0.5,4) {$\tau'$};
		\node [style=vertex, label=left:$w$] (0) at (0, 3) {};
		\node [style=vertex] (1) at (0, 2) {};
		\node [style=vertex] (2) at (0, 0.75) {};
		\node [style=vertex] (3) at (-1, 1) {};
		\node [style=vertex] (4) at (-2, 2) {};
		\node [style=blossom] (5) at (-0.75, 1.5) {};
		\node [style=blossom] (6) at (-1.5, 2) {};
		\node [style=blossom] (7) at (-1.75, 2.5) {};
		\node [style=vertex] (8) at (-2, 3.25) {};
		\node [style=blossom] (9) at (-1.5, 3.25) {};
		\node [style=vertex] (10) at (1, 1.75) {};
		\node [style=blossom] (11) at (0.25, 2.5) {};
		\node [style=blossom] (12) at (0.5, 2.25) {};
		\node [style=blossom] (13) at (1.5, 2) {};
		\node [style=blossom] (14) at (1.5, 1.5) {};
		\node [style=blossom] (15) at (0, 0.25) {};
		\node [style=blossom] (16) at (0.5, 0.75) {};
		\node [style=blossom] (17) at (-1, 0.5) {};
		\node [style=blossom] (18) at (-2, 3.75) {};
		\node [style=blossom] (19) at (-0.25, 3.5) {};
		\node [style=blossom] (20) at (0.25, 3.5) {};
	\end{pgfonlayer}
	\begin{pgfonlayer}{edgelayer}
		\draw [style=branch] (8) to (4);
		\draw [style=branch] (4) to (3);
		\draw [style=branch] (3) to (2);
		\draw [style=branch] (2) to (1);
		\draw [style=branch] (1) to (0);
		\draw [style=branch] (2) to (10);
		\draw [style=gem] (18) to (8);
		\draw [style=gem] (8) to (9);
		\draw [style=gem] (7) to (4);
		\draw [style=gem] (4) to (6);
		\draw [style=gem] (5) to (3);
		\draw [style=gem] (3) to (17);
		\draw [style=gem] (2) to (15);
		\draw [style=gem] (2) to (16);
		\draw [style=gem] (10) to (14);
		\draw [style=gem] (10) to (13);
		\draw [style=gem] (12) to (1);
		\draw [style=gem] (11) to (1);
		\draw [style=gem] (19) to (0);
		\draw [style=gem] (20) to (0);
	\end{pgfonlayer}
\end{tikzpicture}
\begin{tikzpicture}[vertex/.style={circle, inner sep=2pt, fill=black}, blossom/.style={diamond, inner sep=2pt, fill=green!70!black}, branch/.style={very thick, brown}, gem/.style={green!70!black}, scale=.8]
	\clip[use as bounding box] (-3.5,0) rectangle (2,4);
	\begin{pgfonlayer}{nodelayer}
	\node at (-0.5,4) {$C'$};
	\node at (-0.5,1.5) {$S_4$};
		\node [style=vertex, label=left:$w$] (0) at (0, 3) {};
		\node [style=vertex] (1) at (0, 2) {};
		\node [style=vertex] (2) at (0, 0.75) {};
		\node [style=vertex] (3) at (-1, 1) {};
		\node [style=vertex] (4) at (-2, 2) {};
		\node [style=blossom] (5) at (-1.75, 2.5) {};
		\node [style=vertex, label=left:$u$] (6) at (-2, 3.25) {};
		\node [style=blossom] (7) at (-1.5, 3.25) {};
		\node [style=vertex] (8) at (1, 1.75) {};
		\node [style=blossom, label=right:\textcolor{blue}{$\gamma$}] (9) at (0.25, 2.5) {};
		\node [style=blossom] (10) at (1.5, 2) {};
		\node [style=blossom] (11) at (1.5, 1.5) {};
		\node [style=blossom] (12) at (0.5, 0.75) {};
		\node [style=blossom] (13) at (-2, 3.75) {};
		\node [style=blossom] (14) at (-0.25, 3.5) {};
		\node [style=blossom] (15) at (0.25, 3.5) {};
	\end{pgfonlayer}
	\begin{pgfonlayer}{edgelayer}
	\fill[gray!20] (4.center)--(3.center)--(2.center)--(1.center)--(4.center);
		\draw [style=branch] (6) to (4);
		\draw [style=branch] (4) to (3);
		\draw [style=branch] (3) to (2);
		\draw [style=branch] (2) to (1);
		\draw [style=branch] (1) to (0);
		\draw [style=branch] (2) to (8);
		\draw [style=gem] (13) to (6);
		\draw [style=gem] (6) to (7);
		\draw [style=gem] (5) to (4);
		\draw [style=gem] (2) to (12);
		\draw [style=gem] (8) to (11);
		\draw [style=gem] (8) to (10);
		\draw [style=gem] (9) to (1);
		\draw [style=gem] (14) to (0);
		\draw [style=gem] (15) to (0);
		\draw [style=gem] (3) to (1);
		\draw [style=gem] (4) to (1);
		\draw [style=gem] (1) to (8);
		\draw [style=gem, bend right=75, looseness=1.50] (6) to (3);
		\draw [style=gem, bend right=105, looseness=2.00] (6) to (2);
	\end{pgfonlayer}
\end{tikzpicture}
\begin{tikzpicture}[vertex/.style={circle, inner sep=2pt, fill=black}, blossom/.style={diamond, inner sep=2pt, fill=green!70!black}, branch/.style={very thick, brown}, gem/.style={green!70!black}, scale=.8]
\clip[use as bounding box] (-4,0) rectangle (2,4);
	\begin{pgfonlayer}{nodelayer}
	\node at (-2.5,4) {$T$};
	\node at (0.5,2.8) {$f_1$};
	\node at (1.3,3.5) {$f_2$};
		\node [style=vertex, label=above:$w$] (0) at (0, 3) {};
		\node [style=vertex] (1) at (0, 2) {};
		\node [style=vertex] (2) at (0, 0.75) {};
		\node [style=vertex] (3) at (-1, 1) {};
		\node [style=vertex] (4) at (-2, 2) {};
		\node [style=vertex] (5) at (-2, 3.25) {};
		\node [style=vertex] (6) at (1, 1.75) {};
		\node [style=vertex, label=below:$z$] (7) at (1.75, 3.25) {};
		\node [style=vertex] (8) at (3, 3.25) {};
	\end{pgfonlayer}
	\begin{pgfonlayer}{edgelayer}
		\draw [style=branch] (5) to (4);
		\draw [style=branch] (4) to (3);
		\draw [style=branch] (3) to (2);
		\draw [style=branch] (2) to (1);
		\draw [style=branch] (1) to (0);
		\draw [style=branch] (2) to (6);
		\draw [style=gem] (3) to (1);
		\draw [style=gem] (4) to (1);
		\draw [style=gem] (1) to (6);
		\draw [style=gem, bend right=75, looseness=1.50] (5) to (3);
		\draw [style=gem, bend right=105, looseness=2.00] (5) to (2);
		\draw [style=gem, red] (5) to (0);
		\draw [style=gem, red] (4) to (0);
		\draw [style=gem, very thick] (0) to (7);
		\draw [style=gem, blue] (1) to (7);
		\draw [style=gem] (6) to (7);
		\draw [style=gem, bend left=60, looseness=1.00] (0) to (8);
		\draw [style=gem] (6) to (8);
		\draw [style=gem, bend right=45, looseness=1.00] (2) to (8);
		\draw [style=gem, bend left=60, looseness=1.00, orange] (5) to (8);
		\draw[very thick, ->] (7) to (8);
	\end{pgfonlayer}
\end{tikzpicture}\\
\begin{tikzpicture}[vertex/.style={circle, inner sep=2pt, fill=black}, blossom/.style={diamond, inner sep=2pt, fill=green!70!black}, branch/.style={very thick, brown}, gem/.style={green!70!black}, scale=.8]
\clip[use as bounding box] (-2.5,0) rectangle (2,5);
	\begin{pgfonlayer}{nodelayer}
	\node at (-0.5,4) {$\tau$};
	\node[red] at (-0.3,2.6) {$c$};
		\node [style=vertex] (1) at (0, 2) {};
		\node [style=vertex] (2) at (0, 0.75) {};
		\node [style=vertex] (3) at (-1, 1) {};
		\node [style=vertex] (4) at (-2, 2) {};
		\node [style=blossom] (5) at (-0.75, 1.5) {};
		\node [style=blossom] (6) at (-1.5, 2) {};
		\node [style=blossom] (7) at (-1.75, 2.5) {};
		\node [style=vertex] (8) at (-2, 3.25) {};
		\node [style=blossom] (9) at (-1.5, 3.25) {};
		\node [style=vertex] (10) at (1, 1.75) {};
		\node [style=blossom] (11) at (0.25, 2.5) {};
		\node [style=blossom] (12) at (0.5, 2.25) {};
		\node [style=blossom] (13) at (1.5, 2) {};
		\node [style=blossom] (14) at (1.5, 1.5) {};
		\node [style=blossom] (15) at (0, 0.25) {};
		\node [style=blossom] (16) at (0.5, 0.75) {};
		\node [style=blossom] (17) at (-1, 0.5) {};
		\node [style=blossom] (18) at (-2, 3.75) {};
	\end{pgfonlayer}
	\begin{pgfonlayer}{edgelayer}
	\begin{scope}
	\clip (2.center) to (1.center) to (11.center) [bend right=90, looseness=1.1] to (2.center);
	\fill[red!20, draw=red] (0,2) circle (9pt);
	\end{scope}
		\draw [style=branch] (8) to (4);
		\draw [style=branch] (4) to (3);
		\draw [style=branch] (3) to (2);
		\draw [style=branch] (2) to (1);
		\draw [style=branch] (2) to (10);
		\draw [style=gem] (18) to (8);
		\draw [style=gem] (8) to (9);
		\draw [style=gem] (7) to (4);
		\draw [style=gem] (4) to (6);
		\draw [style=gem] (5) to (3);
		\draw [style=gem] (3) to (17);
		\draw [style=gem] (2) to (15);
		\draw [style=gem] (2) to (16);
		\draw [style=gem] (10) to (14);
		\draw [style=gem] (10) to (13);
		\draw [style=gem] (12) to (1);
		\draw [style=gem] (11) to (1);
	\end{pgfonlayer}
\end{tikzpicture}
\begin{tikzpicture}[vertex/.style={circle, inner sep=2pt, fill=black}, blossom/.style={diamond, inner sep=2pt, fill=green!70!black}, branch/.style={very thick, brown}, gem/.style={green!70!black}, scale=.8]
	\clip[use as bounding box] (-3.5,0) rectangle (2,5);
	\begin{pgfonlayer}{nodelayer}
	\node at (-0.5,4) {$C$};
	\node at (-0.5,1.5) {$S_4$};
		\node [style=vertex] (1) at (0, 2) {};
		\node [style=vertex] (2) at (0, 0.75) {};
		\node [style=vertex] (3) at (-1, 1) {};
		\node [style=vertex] (4) at (-2, 2) {};
		\node [style=blossom, label=right:$\textcolor{red}{b_3}$] (5) at (-1.75, 2.5) {};
		\node [style=vertex, label=left:$u$] (6) at (-2, 3.25) {};
		\node [style=blossom, label=right:$\textcolor{red}{b_2}$] (7) at (-1.5, 3.25) {};
		\node [style=vertex] (8) at (1, 1.75) {};
		\node [style=blossom, label=above:$\textcolor{blue}{b_4}$] (9) at (0.25, 2.5) {};
		\node [style=blossom] (10) at (1.5, 2) {};
		\node [style=blossom] (11) at (1.5, 1.5) {};
		\node [style=blossom] (12) at (0.5, 0.75) {};
		\node [style=blossom, label=above:$\textcolor{orange}{b_1}$] (13) at (-2, 3.75) {};
	\end{pgfonlayer}
	\begin{pgfonlayer}{edgelayer}
	\fill[gray!20] (4.center)--(3.center)--(2.center)--(1.center)--(4.center);
		\draw [style=branch] (6) to (4);
		\draw [style=branch] (4) to (3);
		\draw [style=branch] (3) to (2);
		\draw [style=branch] (2) to (1);
		\draw [style=branch] (2) to (8);
		\draw [style=gem] (13) to (6);
		\draw [style=gem] (6) to (7);
		\draw [style=gem] (5) to (4);
		\draw [style=gem] (2) to (12);
		\draw [style=gem] (8) to (11);
		\draw [style=gem] (8) to (10);
		\draw [style=gem] (9) to (1);
		\draw [style=gem] (3) to (1);
		\draw [style=gem] (4) to (1);
		\draw [style=gem] (1) to (8);
		\draw [style=gem, bend right=75, looseness=1.50] (6) to (3);
		\draw [style=gem, bend right=105, looseness=2.00] (6) to (2);
	\end{pgfonlayer}
\end{tikzpicture}
\begin{tikzpicture}[vertex/.style={circle, inner sep=2pt, fill=black}, blossom/.style={diamond, inner sep=2pt, fill=green!70!black}, branch/.style={very thick, brown}, gem/.style={green!70!black}, scale=.8]
\clip[use as bounding box] (-4,0) rectangle (2,5);
	\begin{pgfonlayer}{nodelayer}
	\node at (-2.5,4) {$t$};
		\node [style=vertex] (1) at (0, 2) {};
		\node [style=vertex] (2) at (0, 0.75) {};
		\node [style=vertex] (3) at (-1, 1) {};
		\node [style=vertex] (4) at (-2, 2) {};
		\node [style=vertex] (5) at (-2, 3.25) {};
		\node [style=vertex] (6) at (1, 1.75) {};
		\node [style=vertex] (7) at (1.75, 3.25) {};
		\node [style=vertex] (8) at (3, 3.25) {};
	\end{pgfonlayer}
	\begin{pgfonlayer}{edgelayer}
		\draw [style=branch] (5) to (4);
		\draw [style=branch] (4) to (3);
		\draw [style=branch] (3) to (2);
		\draw [style=branch] (2) to (1);
		\draw [style=branch] (2) to (6);
		\draw [style=gem] (3) to (1);
		\draw [style=gem] (4) to (1);
		\draw [style=gem] (1) to (6);
		\draw [style=gem, bend right=75, looseness=1.50] (5) to (3);
		\draw [style=gem, bend right=105, looseness=2.00] (5) to (2);
		\draw [style=gem, red] (5) to (7);
		\draw [style=gem, red] (4) to (7);
		\draw [style=gem, blue] (1) to (7);
		\draw [style=gem] (6) to (7);
		\draw [style=gem] (6) to (8);
		\draw [style=gem, bend right=45, looseness=1.00] (2) to (8);
		\draw [style=gem, bend left=60, looseness=1.00, orange] (5) to (8);
		\draw[->, very thick] (7) to (8);
	\end{pgfonlayer}
\end{tikzpicture}
	\caption{\label{fig:case c}Case c): $\tau'$ is obtained from $\tau$ by grafting $w$ and with its two blossom children onto the corner $c$; in the triangulation $T=\Xi(\tau',\epsilon)$, the vertex $w$ is a neighbour of both endpoints of the root edge. The intermediate map $C$ is built by first performing the two closures that create the faces of $S_4$ and then performing the other three possible closures. Above, the map $C'$ is the one obtained by closing the corresponding blossoms of $\tau'$ in the same order. Note how the blossoms $b_1,\ldots, b_4$ of $C$ end up joined to the endpoints of the root edge in $t$; above, the corresponding blossoms $b_2, b_3$ are joined to $w$, while $\gamma=b_4$ and $b_1$ are joined to the endpoints of the root edge. The blossoms $\alpha, \beta$ and the edge $(w,p(w))$ create the faces $f_1, f_2$ of $T$.}
\end{figure}

\end{proof}

What we now wish to do is obtain the existence of a growth scheme for triangulations from the existence of a growth scheme for 4-ary trees, which is proven in \cite{LW04}. We will do this in two steps.

\begin{lemma}\label{lemma:h function}
There exists a function $h:\mathbb{N}^2\to [0,1]$ such that for all $a,b\in\mathbb{N}^2$
\begin{itemize}
\item if $a+b\geq 1$, then $h(a,b)+h(b,a)=1$;	
\item $h(a+1,b)\frac{|\CT^4_{a+1}|}{|\CT^4_a|}+h(b+1,a)\frac{|\CT^4_{b+1}|}{|\CT^4_b|}=\frac{|\STr_{a+b+2}|}{|\STr_{a+b+1}|}\frac{a+b+2}{a+b+1}.$
\end{itemize}
	
\end{lemma}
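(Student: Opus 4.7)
I will rewrite (ii) more compactly and then solve the linear system it defines, layer by layer. Since Construction~\ref{PS construction} is a $2n$-to-$1$ map $\BT_n \times \{\pm 1\} \to \STr_n$, one has $|\BT_n| = n|\STr_n|$, so the right-hand side of (ii) equals $D_N := |\BT_{N+1}|/|\BT_N|$, where $N := a+b+1$. Writing $\alpha_j := |\CT^4_{j+1}|/|\CT^4_j|$, (ii) becomes $h(a+1,b)\alpha_a + h(b+1,a)\alpha_b = D_N$. Because (i) and (ii) relate only values of $h(x,y)$ with $x+y$ constant, the problem decouples across layers, and for each $N\ge 1$ I build $h$ on the layer $\{(x,y): x+y=N\}$ independently.

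On a fixed layer I set the boundary values $h(N,0):=1$ and $h(0,N):=0$ (compatible with (i), and natural since the ``empty'' side $\CT^4_{-1}$ does not exist), and then run the recursion
\begin{equation*}
h(k+1, N-1-k) := \frac{D_N - h(N-k,k)\alpha_{N-1-k}}{\alpha_k},\qquad h(N-1-k,k+1) := 1 - h(k+1, N-1-k),
\end{equation*}
for $k = 0, 1, \ldots$, where each step uses (ii) at $(a,b)=(N-1-k,k)$ to produce a new value and then (i) to make it the input of the next step. The sweep determines every $h(x,y)$ in the layer in $\lceil N/2\rceil$ steps.

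Two tasks remain. The first is a consistency check for even $N$, where (i) forces $h(N/2,N/2)=1/2$ while the last recursion step also determines it; I plan to verify this by summing (ii) over all $(a,b)$ with $a+b=N-1$ and matching the result against the convolution identity $|\BT_{N+1}| = \sum_{k=0}^{N}|\CT^4_k||\CT^4_{N-k}|$ from Construction~\ref{BT to (l,r)}. The second, and hardest, is verifying $h(x,y)\in[0,1]$; by (i) this reduces to non-negativity, which at each recursion step is equivalent to the double inequality $0 \le D_N - h(N-k,k)\alpha_{N-1-k} \le \alpha_k$. I expect these inequalities to follow, by induction on $k$, from the log-concavity of the Fuss--Catalan sequence $|\CT^4_n| = \tfrac{1}{3n+1}\binom{4n}{n}$ (equivalent to monotonicity of $(\alpha_j)$), combined with termwise estimates for the convolutions defining $|\BT_N|$ and $|\BT_{N+1}|$. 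The main obstacle is precisely this last point: the needed comparison is between $D_N$ (a ratio of weighted convolutions) and individual ratios $\alpha_j$, and may require summing log-concavity inequalities with specific combinatorial weights rather than following from monotonicity alone.
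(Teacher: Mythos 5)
Your setup is sound and, in fact, is essentially a re-parametrisation of the paper's argument: the paper encodes the layer $\{(x,y):x+y=N\}$ as a flow network $G_{N-1}$ whose internal edges form a path, with source capacities $\frac{|\CT^4_a||\CT^4_b|}{N|\STr_N|}$ and sink capacities $\frac{|\CT^4_a||\CT^4_b|}{(N+1)|\STr_{N+1}|}$ (both summing to $1$ by the convolution identity you cite), and your end-to-end sweep is precisely the computation of the unique saturating unit flow on that path. The existence statement you need — that every value produced by the sweep lies in $[0,1]$, equivalently $0\le D_N-h(N-k,k)\alpha_{N-1-k}\le \alpha_k$ at every step — is exactly the statement that every finite-capacity cut of $G_{N-1}$ has capacity at least $1$, which the paper reduces (by a local exchange argument) to comparing the ``staircase'' cuts $K_{-1},K_0,\dots,K_{n-1},S_n$.

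The genuine gap is that this verification, which is the entire mathematical content of the lemma, is left as a hope, and the tools you propose for it are not quite the right ones. Two facts are needed. First, $\alpha_j=|\CT^4_{j+1}|/|\CT^4_j|=4\prod_{i=1}^{3}\frac{4j+i}{3j+i+1}$ is \emph{increasing} in $j$; this is log-\emph{convexity} of the Fuss--Catalan numbers, the opposite of the log-concavity you invoke. Second — and this is the step your plan omits entirely — one needs the single explicit comparison between the extreme ratio and the right-hand side of (ii), namely $\alpha_{N-1}<D_N$, i.e.
\begin{equation*}
\frac{|\CT^4_{N}|}{|\CT^4_{N-1}|}\;<\;\frac{(N+1)\,|\STr_{N+1}|}{N\,|\STr_{N}|},
\end{equation*}
which upon substituting the closed forms from \eqref{formula triangulations-trees} and \eqref{CT and Mob cardinality} reduces to $\frac{(4n+1)(3n+5)}{(3n+2)(4n+5)}<1$ with $n=N-1$. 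Monotonicity of $(\alpha_j)$ then gives $\alpha_j<D_N$ for all $j\le N-1$, which is exactly the termwise inequality $p_{n-i-1}>q_{n-i}$ (source capacity versus sink capacity) that makes the staircase-cut capacities monotone and hence bounded below by $1$, closing your induction. Without this comparison between $\alpha_j$ and $D_N$ — a ratio of tree counts against a ratio of triangulation counts — no amount of log-convexity of $|\CT^4_j|$ alone will yield the sandwich $0\le D_N-h\alpha\le\alpha_k$, so as written the proposal does not constitute a proof.
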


\begin{proof}
In order to prove the statement, we rephrase it in terms of flow networks. For each $n\in\mathbb{N}$, consider the bipartite directed graph whose vertices are indexed by pairs $(a,b)\in\mathbb{N}^2$ such that $a+b=n$ or $a+b=n+1$, with an edge from each pair $(a,b)$ such that $a+b=n$ to the two pairs $(a+1,b)$ and $(a,b+1)$. Add a source $s$ connected to all vertices $(a,b)$ such that $a+b=n$ and a sink $t$ connected to all vertices $(a,b)$ such that $a+b=n+1$. Let $G_n$ be the flow network given by the directed graph just described (see Figure~\ref{fig:flow network}), with capacities assigned as follows:
\begin{itemize}
\item the edge from $s$ to a pair $(a,b)$ such that $a+b=n$ has capacity $C_{s\to(a,b)}=\frac{|\CT^4_a||\CT^4_b|}{(n+1)|\STr_{n+1}|}$;
\item the edge from a pair $(a,b)$ such that $a+b=n+1$ to $t$ has capacity $C_{(a,b)\to t}=\frac{|\CT^4_a||\CT^4_b|}{(n+2)|\STr_{n+2}|}$;
\item edges not involving the source or sink have infinite capacity.	
\end{itemize}

Note that we have $\sum_{a=0}^N|\CT^4_a||\CT^4_{N-a}|=(N+1)|\STr_{N+1}|$ by \eqref{formula triangulations-trees}, so the total capacity of edges issued from the source is one, and the same is true for the total capacity of edges involving the sink.

The existence of a function $h$ satisfying the requirements of the lemma is implied by the existence of a flow of value $1$ on each network $G_n$. Indeed, given a unit flow $(f_e)_{e\in E(G_n)}$ on the flow network $G_n$, we may assume without loss of generality that $f_{(a,b)\to(a+1,b)}=f_{(b,a)\to(b,a+1)}$, as replacing $f_{(a,b)\to(a+1,b)}$ and $f_{(b,a)\to(b,a+1)}$ by $\frac{f_{(a,b)\to(a+1,b)}+f_{(b,a)\to(b,a+1)}}{2}$ for all $a,b$ still yields a unit flow on $G_n$, thanks to the fact that $C_{s\to(a,b)}=C_{s\to(b,a)}$ and $C_{(a,b)\to t}=C_{(b,a)\to t}$.

We will now set $h(a+1,b)=\frac{f_{(a,b)\to (a+1,b)}}{C_{(a+1,b)\to t}}$ for all $(a,b)\in\mathbb{N}^2$ and $h(0,b)=0$ for all $b\in\mathbb{N}$. When $a,b\geq 1$, we have
$$h(a,b)C_{(a,b)\to t}=f_{(a-1,b)\to (a,b)}=f_{(b,a-1)\to (b,a)}=C_{(a,b)\to t}-f_{(b-1,a)\to (b,a)}=(1-h(a,b))C_{(a,b)\to t};$$
moreover, $h(0,b)=0$ and $h(a,0)=1$ for all $a,b\geq 1$.

We also have 
$$h(a+1,b)C_{(a+1,b)\to t}+h(b+1,a)C_{(b+1,a)\to t}=f_{(a,b)\to(a+1,b)}+f_{(b,a)\to(b+1,a)}=$$
$$=f_{(a,b)\to(a+1,b)}+f_{(a,b)\to(a,b+1)}=C_{s\to(a,b)},$$
which is exactly the second required expression for $h$.

We now claim the existence of a unit flow on $G_n$, and will proceed to show this claim by the max-flow-min-cut theorem via a series of quick lemmas, culminating in Corollary~\ref{cor:unit flow}, which completes the proof of the existence of $h$.
\end{proof}

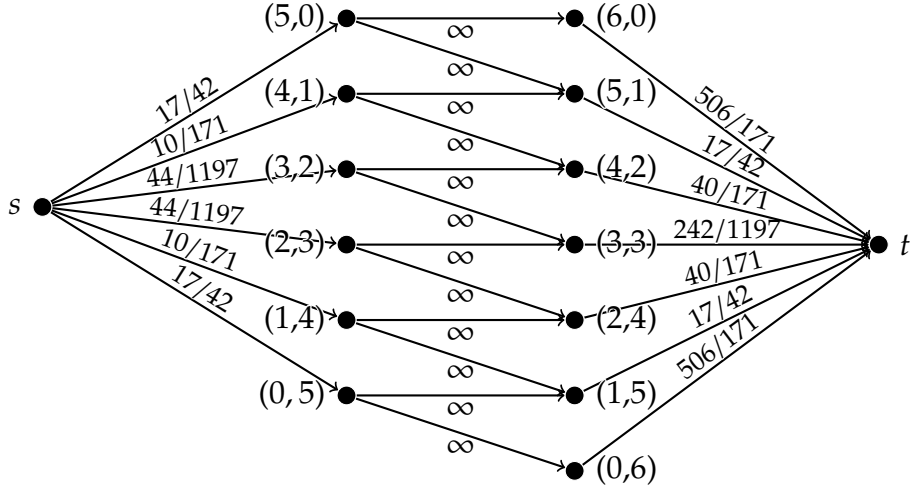
\begin{figure}\centering
\begin{tikzpicture}[vertex/.style={circle, fill=black, inner sep=2.5pt, draw=white}, simple/.style={->, thick}]
	\begin{pgfonlayer}{nodelayer}
		\node [style=vertex, label=left:${(0,5)}$] (0) at (-6, -0) {};
		\node [style=vertex, label=left:$\contour{white}{(1,4)}$] (1) at (-6, 1) {};
		\node [style=vertex, label=left:$\contour{white}{(2,3)}$] (2) at (-6, 2) {};
		\node [style=vertex, label=left:$\contour{white}{(3,2)}$] (3) at (-6, 3) {};
		\node [style=vertex, label=left:$\contour{white}{(4,1)}$] (4) at (-6, 4) {};
		\node [style=vertex, label=left:$\contour{white}{(5,0)}$] (5) at (-6, 5) {};
		\node [style=vertex, label=right:$\contour{white}{(5,1)}$] (6) at (-3, 4) {};
		\node [style=vertex, label=right:$\contour{white}{(4,2)}$] (7) at (-3, 3) {};
		\node [style=vertex, label=right:$\contour{white}{(3,3)}$] (8) at (-3, 2) {};
		\node [style=vertex, label=right:$\contour{white}{(2,4)}$] (9) at (-3, 1) {};
		\node [style=vertex, label=right:$\contour{white}{(1,5)}$] (10) at (-3, -0) {};
		\node [style=vertex, label=right:$\contour{white}{(0,6)}$] (11) at (-3, -1) {};
		\node [style=vertex, label=right:$\contour{white}{(6,0)}$] (12) at (-3, 5) {};
		\node [style=vertex, label=left:$s$] (13) at (-10, 2.5) {};
		\node [style=vertex, label=right:$t$] (14) at (1, 2) {};
		\foreach \i in {1,...,12} \node at (-4.5,0.5*\i-1.2) {\contour{white}{$\infty$}};
	\end{pgfonlayer}
	\begin{pgfonlayer}{edgelayer}
		\draw [style=simple] (5) to (6);
		\draw [style=simple] (4) to (6);
		\draw [style=simple] (4) to (7);
		\draw [style=simple] (3) to (7);
		\draw [style=simple] (3) to (8);
		\draw [style=simple] (2) to (8);
		\draw [style=simple] (2) to (9);
		\draw [style=simple] (1) to (9);
		\draw [style=simple] (1) to (10);
		\draw [style=simple] (0) to (10);
		\draw [style=simple] (0) to (11);
		\draw [style=simple] (5) to (12);
		\draw (13) edge [style=simple] node [midway, above=-4pt, sloped] {\footnotesize\contour{white}{$17/42$}} (5);
		\draw (13) edge [style=simple] node [midway, above=-4pt, sloped] {\footnotesize\contour{white}{$10/171$}} (4);
		\draw (13) edge [style=simple] node [midway, above=-4pt, sloped] {\footnotesize\contour{white}{$44/1197$}} (3);
		\draw (13) edge [style=simple] node [midway, above=-4pt, sloped] {\footnotesize\contour{white}{$44/1197$}} (2);
		\draw (13) edge [style=simple] node [midway, above=-4pt, sloped] {\footnotesize\contour{white}{$10/171$}} (1);
		\draw (13) edge [style=simple] node [midway, above=-4pt, sloped] {\footnotesize\contour{white}{$17/42$}} (0);
		\draw (11) edge [style=simple] node [midway, above=-4pt, sloped] {\footnotesize\contour{white}{$506/171$}} (14);
		\draw (10) edge [style=simple] node [midway, above=-4pt, sloped] {\footnotesize\contour{white}{$17/42$}} (14);
		\draw (9) edge [style=simple] node [midway, above=-4pt, sloped] {\footnotesize\contour{white}{$40/171$}} (14);
		\draw (8) edge [style=simple] node [midway, above=-4pt, sloped] {\footnotesize\contour{white}{$242/1197$}} (14);
		\draw (7) edge [style=simple] node [midway, above=-4pt, sloped] {\footnotesize\contour{white}{$40/171$}} (14);
		\draw (6) edge [style=simple] node [midway, above=-4pt, sloped] {\footnotesize\contour{white}{$17/42$}} (14);
		\draw (12) edge [style=simple] node [midway, above=-4pt, sloped] {\footnotesize\contour{white}{$506/171$}} (14);
	\end{pgfonlayer}
\end{tikzpicture}
\caption{\label{fig:flow network}The flow network $G_n$ for $n=5$. Edges are labelled with their capacities: for example, $\frac{|\CT^4_5||\CT^4_0|}{6|\STr_6|}=\frac{17}{42}$ and $\frac{|\CT^4_3||\CT^4_3|}{7|\STr_7|}=\frac{242}{1197}$.}
\end{figure}

Let $G_n$ be the flow network described within the proof of Lemma~\ref{lemma:h function} and depicted by Figure~\ref{fig:flow network}. An $s$-$t$-cut of $G_n$ is a partition of the vertices of $G_n$ as $S\cup T$, where $s\in S$ and $t\in T$, and its capacity is the sum of the capacities of edges of $G_n$ that join a vertex in $S$ to a vertex in $T$. For $n\geq 1$, let $S_n=\{(a,b)\in\mathbb{N}^2\mid a+b=n\}$, so that we can think of the vertices of $G_n$ as $S_n\cup S_{n+1}\cup\{s,t\}$. We can represent an $s$-$t$-cut of $G_n$ by the set $K=(S_n\cap T) \cup (S_{n+1}\cap S)$; this is a handy representation, as the capacity of the cut is infinite if there exists an edge $(v,w)$, where $v\in S_n$ and $w\in S_{n+1}$, such that $v,w\notin K$, and it is equal to $\sum_{v\in K}1_{v\in S_n}C_{s\to v}+1_{v\in S_{n+1}}C_{v\to t}$ otherwise.

In the following series of Lemmas, we refer to the flow network $G_n$ without redefining it and denote cuts by the corresponding set $K$ as constructed above; we write $|K|$ for the capacity of the cut.

\begin{lemma}
The cuts of $G_n$ having smallest capacity are of the form $K_i=\{(n-j,j)\mid 0\leq j\leq i\}\cup \{(n+1-j,j)\mid i+1\leq j\leq n+1\}$ for some $-1\leq i<n$, or $K=S_n$.
\end{lemma}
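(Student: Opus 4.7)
My plan is to proceed in two stages: first, reduce the question to a combinatorial optimisation problem on $\{0,1\}^{n+1}$; then apply an exchange argument to identify the minimisers.

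For the reduction, I observe that a cut $K = S \sqcup T$ (with $s \in S$, $t \in T$) has finite capacity iff no internal edge is cut, i.e. for every edge $v \to w$ with $v \in S_n$ and $w \in S_{n+1}$, at least one of $v,w$ lies in $K = (S_n \cap T) \cup (S_{n+1} \cap S)$. Writing $A = K \cap S_n$, $B = K \cap S_{n+1}$, indexing $S_n$ as $v_j = (n-j,j)$ for $0\le j\le n$ and $S_{n+1}$ as $w_k = (n+1-k,k)$ for $0\le k\le n+1$, this becomes: whenever $v_j \notin A$ one has $w_j, w_{j+1} \in B$. Since $|K|$ is strictly increasing in $B$, a minimum cut must satisfy $B = B_{\min}(A) := \bigcup_{v_j\notin A}\{w_j, w_{j+1}\}$. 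Encoding $A$ by its indicator $x \in \{0,1\}^{n+1}$ and using the boundary conventions $x_{-1} = x_{n+1} = 1$, the capacity of the resulting cut is the pseudo-Boolean function
\[
|K| \;=\; 1 + \sum_{j=0}^n x_j \alpha_j - \sum_{k=0}^{n+1} x_{k-1} x_k \beta_k,
\]
where $\alpha_j = C_{s\to v_j}$ and $\beta_k = C_{w_k \to t}$. The cuts $K_i$ (for $-1\le i\le n-1$) and $K = S_n$ correspond precisely to the monotone patterns $x = (1,\ldots,1,0,\ldots,0)$, with $i+1$ leading $1$s (and $K = S_n$ being the all-ones case).

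The heart of the proof is an exchange argument: if $x$ is not monotone non-increasing, there is a position $j$ with $x_j = 0$ and $x_{j+1} = 1$, and I compare $|K|$ with the capacity of the cut $K'$ obtained by swapping these two entries. A direct expansion of the formula above gives
\[
|K'|-|K| \;=\; \alpha_j - \alpha_{j+1} + x_{j+2}\beta_{j+2} - x_{j-1}\beta_j.
\]
A short case analysis on the four values of $(x_{j-1},x_{j+2})$, using the symmetry $(a,b)\leftrightarrow(b,a)$ (which sends $\alpha_j\mapsto \alpha_{n-j}$ and $\beta_k\mapsto \beta_{n+1-k}$) and the explicit formulas $\alpha_j \propto |\CT^4_{n-j}||\CT^4_j|$, $\beta_k \propto |\CT^4_{n+1-k}||\CT^4_k|$, shows that one may always arrange the swap so that $|K'|-|K| \le 0$, and strictly so unless the swap leaves the cut unchanged. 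Iterating such swaps sorts the $1$s to the left of the $0$s, producing a cut of the form $K_i$ or $S_n$ with capacity no greater than $|K|$; hence every minimum cut is of this form.

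The step I expect to be most delicate is the boundary handling, because the conventions $x_{-1} = x_{n+1} = 1$ make the positions $j = 0$ and $j = n-1$ behave asymmetrically with respect to the interior ones, and because in the case $(x_{j-1}, x_{j+2}) = (0,1)$ one must compare $\alpha_j + \beta_{j+2}$ with $\alpha_{j+1}$, a quantity with no a priori sign. To control this, I plan to invoke the identity $(N+1)|\STr_{N+1}| = \sum_{a+b=N}|\CT^4_a||\CT^4_b|$ from \eqref{formula triangulations-trees}, which forces $\sum_j \alpha_j = \sum_k \beta_k = 1$ and in particular ensures that the two extremal cuts $K_{-1} = S_{n+1}$ and $K = S_n$ both have capacity exactly $1$; these will play the role of anchors against which all non-monotone configurations can be compared.
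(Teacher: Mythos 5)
Your reduction to a pseudo-Boolean minimisation over $x\in\{0,1\}^{n+1}$ is correct, as is the formula $|K'|-|K|=\alpha_j-\alpha_{j+1}+x_{j+2}\beta_{j+2}-x_{j-1}\beta_j$ for an adjacent swap. But the central claim -- that the case analysis ``shows that one may always arrange the swap so that $|K'|-|K|\le 0$'' -- is false, and the argument cannot be repaired within the adjacent-transposition framework. The obstruction is that $\alpha_j\propto|\CT^4_j||\CT^4_{n-j}|$ is not monotone but U-shaped: since $|\CT^4_{a+1}|/|\CT^4_a|$ is increasing in $a$, one has $\alpha_{j+1}<\alpha_j$ for $j<(n-1)/2$ and $\alpha_{j+1}>\alpha_j$ for $j>(n-1)/2$, with a strict interior minimum at $j^*=\lfloor n/2\rfloor$. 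Take $x$ with a single $1$ at position $j^*$ and $0$s elsewhere (so $A=\{(n-j^*,j^*)\}$, $B=S_{n+1}$, $|K|=1+\alpha_{j^*}$). This is not of the form $K_i$ or $S_n$, yet both available swaps fall into your case $(x_{j-1},x_{j+2})=(0,0)$ and cost $\alpha_{j^*\pm1}-\alpha_{j^*}>0$; reflection maps this configuration to another of the same type, so the symmetry does not help. Concretely, for $n=4$ one computes $\alpha=(140,22,16,22,140)/340$, and $x=(0,0,1,0,0)$ has capacity $1+16/340$ while its only two swap-neighbours $(0,1,0,0,0)$ and $(0,0,0,1,0)$ both have capacity $1+22/340$. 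So the local search gets stuck at a non-monotone strict local minimum, and your argument provides no way to exclude it from being a minimum cut. (The case $(0,1)$ you flag as delicate also genuinely fails near $j^*$, e.g.\ $n=4$, $j=1$ requires $\alpha_2\ge\alpha_1+\beta_3$, which is false; and even if sorting worked it could terminate at a monotone \emph{non-decreasing} $x$, i.e.\ the reflection of a $K_i$, which is not among the cuts listed in the statement.)

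The paper avoids all of this by using a different elementary move: in your notation, instead of swapping two adjacent bits of $A$, it flips a single bit $x_i:1\to 0$ at a position where $x_{i-1}=0$, compensating by adding the single sink-side vertex $w_{i+1}=(n-i,i+1)$ to $B$ (the other required vertex $w_i$ is already forced into $K$ by finiteness of the capacity, because $v_{i-1}\notin K$). The cost of this move is $C_{(n-i,i+1)\to t}-C_{s\to(n-i,i)}=\beta_{i+1}-\alpha_i$, and the single inequality $\beta_{i+1}<\alpha_i$ -- valid for \emph{every} $i$, by the monotonicity of $|\CT^4_{a+1}|/|\CT^4_a|$ together with the ratio computation in \eqref{eq:<1 ratio} -- makes the move strictly profitable with no case analysis and no boundary issues. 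Any finite-capacity cut admitting no such position must have $K\cap S_n=\{(n-j,j)\mid 0\le j\le i\}$ for some $i$ (or be empty/full on $S_n$), which together with finiteness forces $K\supseteq K_i$ or $K\supseteq S_n$. If you want to salvage your set-up, replace the adjacent swap by this single-bit deletion; the normalisation $\sum_j\alpha_j=\sum_k\beta_k=1$ from \eqref{formula triangulations-trees} that you invoke is not what drives the argument.
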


\begin{proof}
Note that all cuts listed in the statement have finite capacity.

	Consider a cut $K$ of $G_n$ that is not of the form in the statement and suppose it has finite capacity (otherwise it is not minimal). If $K$ strictly contains $S_n$ or some $K_i$, then it cannot be minimal, so we may assume this is not the case.
	
	 Under these assumptions, there is a vertex $(n-i,i)\in K$ such that $i>0$ and $(n-i+1,i-1)\notin K$. Indeed, if this were not the case then $K\cap S_n$ would be of the form $\{(n-j,j)\mid 0\leq j\leq i\}$ for some $i>0$, which, in order for $K$ to have finite capacity, would imply that it contains $K_i$ or (if $i=n$) $S_n$.
	 
	  Consider the cut $K'=K\cup \{(n-i,i+1)\}\setminus\{(n-i,i)\}$; we claim that its capacity is strictly smaller than that of $K$. Indeed, the cut $K'$ has finite capacity: if there is an edge of $G_n$ of infinite capacity that has no endpoint in $K'$, then it must involve the vertex $(n-i,i)$ and cannot be the one joining $(n-i,i)$ to $(n-i,i+1)$; it must therefore be the edge between $(n-i,i)$ and $(n-i+1,i)$. However, this cannot be the case, because $(n-i+1,i)\in K$ (otherwise $K$ would not contain either endpoint of the edge from $(n-i+1,i-1)$ to $(n-i+1,i)$ and therefore have infinite capacity) and so $(n-i+1,i)\in K'$.
	
	The difference in capacities is given by
	$$|K'|-|K|=C_{(n-i,i+1)\to t}-C_{s\to(n-i,i)}=\frac{|\CT^4_{i+1}||\CT^4_{n-i}|}{(n+2)|\STr_{n+2}|}-\frac{|\CT^4_{i}||\CT^4_{n-i}|}{(n+1)|\STr_{n+1}|};$$
	this quantity is strictly negative, as
\begin{equation}\label{eq:increasing ratio for 4-ary trees}\frac{\CT^4_{i+1}}{\CT^4_i}=\frac{4(4i+1)(4i+2)(4i+3)}{(3i+2)(3i+3)(3i+4)}=4\prod_{j=1}^3\frac{4i+j}{3i+j+1}\end{equation}
	is strictly increasing in $i$ (each term in the product is), which implies that 	
	$$\frac{(n+1)|\STr_{n+1}||\CT^4_{i+1}||\CT^4_{n-i}|}{(n+2)|\STr_{n+2}||\CT^4_{i}||\CT^4_{n-i}|}\leq\frac{(n+1)|\STr_{n+1}||\CT^4_{n+1}|}{(n+2)|\STr_{n+2}||\CT^4_{n}|}=\frac{4(4n+1)(4n+2)(4n+3)}{(3n+2)(3n+3)(3n+4)}\cdot \frac{(n+1)|\STr_{n+1}|}{(n+2)|\STr_{n+2}|}.$$
	Using the expression from \eqref{formula triangulations-trees} for the number of simple triangulations, we get that the ratio above equals
	\begin{equation}\label{eq:<1 ratio}\frac{4(4n+1)(4n+2)(4n+3)(n+1)(4n+1)!(n+2)!(3n+5)!}{(3n+2)(3n+3)(3n+4)(n+2)(4n+5)!(n+1)!(3n+2)!}=\frac{(4n+1)(3n+5)}{(3n+2)(4n+5)}<1.\end{equation}
\end{proof}

\begin{lemma}
Among the cuts of $G_n$ of the form $K_i=\{(n-j,j)\mid 0\leq j\leq i\}\cup \{(n+1-j,j)\mid i+1\leq j\leq n+1\}$, where $-1\leq i<n$, or $S_n$, the ones with smallest capacity are $K_{-1}$ (which is $S_{n+1}$) and $S_n$, whose capacity is 1.
\end{lemma}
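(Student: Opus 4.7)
The plan is first to verify directly that $|S_n| = |K_{-1}| = 1$ and then to show that the remaining cuts $K_i$ with $0 \leq i \leq n-1$ are strictly larger, by establishing that $i \mapsto |K_i|$ is strictly increasing on $\{-1, 0, \ldots, n-1\}$.

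The two equalities are immediate from \eqref{formula triangulations-trees}, which in particular states $\sum_{j=0}^{m} |\CT^4_j|\,|\CT^4_{m-j}| = (m+1)|\STr_{m+1}|$. Applied with $m = n$, this gives $|S_n| = \sum_{j=0}^{n} C_{s\to (n-j,j)} = 1$; applied with $m = n+1$, it gives $|K_{-1}| = |S_{n+1}| = \sum_{j=0}^{n+1} C_{(n+1-j,j)\to t} = 1$.

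For the monotonicity, I observe that $K_i$ is obtained from $K_{i-1}$ by adding $(n-i, i)$ to the $S_n$-part of the cut and removing $(n+1-i, i)$ from its $S_{n+1}$-part (no infinite-capacity edges ever appear, since the cut structure is preserved), so the telescoping difference is
\[
|K_i| - |K_{i-1}| \;=\; C_{s\to (n-i,i)} - C_{(n+1-i,i)\to t} \;=\; \frac{|\CT^4_i|\,|\CT^4_{n-i}|}{(n+1)|\STr_{n+1}|}\left(1 - \frac{(n+1)|\STr_{n+1}|}{(n+2)|\STr_{n+2}|}\cdot \frac{|\CT^4_{n+1-i}|}{|\CT^4_{n-i}|}\right).
\]
Positivity of this quantity is therefore equivalent to the bound $\frac{|\CT^4_{n+1-i}|}{|\CT^4_{n-i}|} < \frac{(n+2)|\STr_{n+2}|}{(n+1)|\STr_{n+1}|}$, which can be read off from ingredients already assembled in the previous lemma: by \eqref{eq:increasing ratio for 4-ary trees}, the ratio $|\CT^4_{m+1}|/|\CT^4_m|$ is strictly increasing in $m$, so over the range $n-i \in \{1, \ldots, n\}$ its maximum is attained at $m=n$ and equals $|\CT^4_{n+1}|/|\CT^4_n|$; the explicit computation \eqref{eq:<1 ratio} then shows that even this maximum value, once multiplied by $\tfrac{(n+1)|\STr_{n+1}|}{(n+2)|\STr_{n+2}|}$, is strictly less than $1$.

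Combining these facts, $|K_i|$ strictly increases with $i$ starting from $|K_{-1}| = 1$, so $|K_i| > 1$ for every $0 \leq i \leq n-1$. Together with $|S_n| = 1$, this singles out $K_{-1}$ and $S_n$ as the only cuts of capacity $1$ in the enumerated family, and hence as its minimum-capacity cuts. I do not anticipate any substantive obstacle: the combinatorial monotonicity of $|\CT^4_{m+1}|/|\CT^4_m|$ and the decisive strict inequality \eqref{eq:<1 ratio} have been established in the proof of the previous lemma and do all the work here.
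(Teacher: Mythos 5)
Your proof is correct and follows essentially the same route as the paper's: both compute the telescoping difference of consecutive cuts $K_{i}$, reduce positivity to the monotonicity of $|\CT^4_{m+1}|/|\CT^4_m|$ from \eqref{eq:increasing ratio for 4-ary trees} together with the explicit ratio computation \eqref{eq:<1 ratio}, and note that $|K_{-1}|=|S_n|=1$ via \eqref{formula triangulations-trees}. No gaps.
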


\begin{proof}
We have the inequality $|K_{i+1}|>|K_i|$	 for $i=-1,\ldots,n-1$. Indeed, we have $K_{i+1}=K_i\cup\{(n-i-1,i+1)\}\setminus\{(n-i,i+1)\}$, so
$$|K_{i+1}|-|K_i|=C_{s\to(n-i-1,i+1)}-C_{(n-i,i+1)\to t}=\frac{|\CT^4_{n-i-1}||\CT^4_{i+1}|}{(n+1)|\STr_{n+1}|}-\frac{|\CT^4_{n-i}||\CT^4_{i+1}|}{(n+2)|\STr_{n+2}|},$$
and this quantity is strictly positive by the argument of the previous lemma. Indeed, since $\frac{\CT^4_{a+1}}{\CT^4_a}$ is increasing in $a$ by \eqref{eq:increasing ratio for 4-ary trees}, we have
$$\frac{|K_{i+1}|}{|K_i|}=\frac{(n+2)|\STr_{n+2}||\CT^4_{n-i-1}|}{(n+1)|\STr_{n+1}||\CT^4_{n-i}|}\geq\frac{(n+2)|\STr_{n+2}||\CT^4_{n}|}{(n+1)|\STr_{n+1}||\CT^4_{n+1}|}>	1,$$
as it is the inverse of \eqref{eq:<1 ratio}. It follows that $|K_{-1}|=1<|K_i|$ for $i\geq 0$; the capacity of the cut $S_n$ is also 1.
\end{proof}

Finally, by the max-flow-min-cut theorem, we have obtained the following, which concludes the proof of Lemma~\ref{lemma:h function}:
\begin{cor}\label{cor:unit flow}
For all $n\geq 1$, there exists a unit flow on the network $G_n$.	
\end{cor}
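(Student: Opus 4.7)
The plan is to invoke the max-flow-min-cut theorem directly, using the minimum cut classification already established in the two preceding lemmas. First I would observe that the total capacity of the edges leaving the source $s$ equals
\[
\sum_{(a,b)\in S_n} C_{s\to(a,b)} = \sum_{a=0}^{n} \frac{|\CT^4_a||\CT^4_{n-a}|}{(n+1)|\STr_{n+1}|} = 1,
\]
by the identity $\sum_{a=0}^{N}|\CT^4_a||\CT^4_{N-a}| = (N+1)|\STr_{N+1}|$ coming from \eqref{formula triangulations-trees}. Symmetrically, the total capacity of edges entering $t$ is also $1$. In particular, any $s$-$t$-cut has capacity at most $1$, so the minimum cut value is at most $1$.

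Next, I would combine the two preceding lemmas: the first showed that every finite-capacity minimum cut of $G_n$ must be of the form $K_i$ (for some $-1\le i<n$) or $S_n$; the second showed that among these, the smallest capacity is attained by $K_{-1}=S_{n+1}$ and by $S_n$, each of capacity exactly $1$. Thus the minimum cut value is exactly $1$.

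Finally, by the max-flow-min-cut theorem, the maximum $s$-$t$ flow in $G_n$ equals the minimum cut capacity, which is $1$. Hence a unit flow exists on $G_n$, proving the corollary. The only possible obstacle would be checking that the infinite-capacity edges do not trivialise the problem, but this is already handled by the first lemma, which ensures that any cut not of the listed forms must either contain infinite-capacity edges (and hence have capacity $\infty$) or strictly contain a listed cut (and hence be non-minimal). So no further calculation is required beyond citing the two previous lemmas and the max-flow-min-cut theorem.
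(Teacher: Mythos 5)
Your proposal is correct and follows essentially the same route as the paper: the corollary is obtained by combining the two preceding lemmas (which identify the minimum cuts and show their capacity is $1$) with the max-flow-min-cut theorem. Your additional observation that the total source capacity equals $1$ is already implicit in the paper's setup and does not change the argument.
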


We are now ready to prove the main result, i.e. Theorem~\ref{triangulation growth}.

\begin{proof}[Proof of Theorem~\ref{triangulation growth}]
Let $(g_n)_{n\geq 0}$ be a growth scheme for complete 4-ary trees, whose existence is guaranteed by Theorem~\ref{d-ary trees growth}.
Define $f_n:\STr_{n+1}\times\STr_n\to [0,1]$ as follows. Given a triangulation $\Delta\in\STr_k$ and $\epsilon=\pm1$, let $S^\epsilon(\Delta)$ be the set of all pairs $(L,R)$ of 4-ary trees for which the corresponding blossoming tree $\tau$ is such that $\Xi(\tau,\epsilon)=\Delta$. Note that $|S^1(\Delta)\cup S^{-1}(\Delta)|=2k$, as the correspondence between pairs $(L,R)$ and simple triangulations with $2k$ faces is $2k$-to-$2$ by Construction~\ref{PS construction}.

For $T\in\STr_{n+1}$ and $t\in\STr_n$, set
$$f_n(T,t)=\frac{1}{2n+2}\sum_{\epsilon=\pm1}\sum_{(L,R)\in S^\epsilon(T)}\sum_{(l,r)\in S^\epsilon(t)}h(|L|,|R|)1_{R=r}g_{|l|}(L,l)+(1-h(|L|,|R|))1_{L=l}g_{|r|}(R,r).$$

We now check the three properties that $(f_n)_{n\geq 1}$ needs to satisfy in order to be a growth scheme for simple triangulations.

First of all, in order for $f_n(T,t)$ to be nonzero, we must have $g_{|r|}(R,r)>0$ or $g_{|l|}(L,l)>0$ for some $R,r$ or $L,l$ such that $(R,L)\in S^\epsilon(T)$ and $(r,l)\in S^\epsilon(t)$ for some $\epsilon=\pm1$. This implies that $R=\grow(r,v)$ or $L=\grow(l,v)$ for some leaf $v$ by definition of a growth scheme for 4-ary trees. Since we have $T=\Xi(\tau,\epsilon)$, where $\tau$ is the blossoming tree corresponding to $(L,R)$, and $t=\Xi(\tau',\epsilon)$, where $\tau'$ is the blossoming tree corresponding to $(l,r)$, Lemma~\ref{lemma:growing trees to growing triangulations} implies the existence of an edge $e$ of $T$ such that $t=\coll(T,e)$, as wanted.

Now consider $\sum_{t\in\STr_n}f_n(T,t)$ for any fixed $T\in\STr_{n+1}$, that is,
$$\frac{1}{2n+2}\sum_{t\in\STr_n}\sum_{\epsilon=\pm1}\sum_{(L,R)\in S^\epsilon(T)}\sum_{(l,r)\in S^\epsilon(t)}h(|L|,|R|)1_{R=r}g_{|l|}(L,l)+(1-h(|L|,|R|))1_{L=l}g_{|r|}(R,r).$$

For ease of notation, denote by $F(L,R,l,r)$ the expression appearing within the repeated sums (which does not depend on $\epsilon$). By changing the order of summation, we obtain
$$\frac{1}{2n+2}\sum_{\epsilon=\pm1}\sum_{(L,R)\in S^\epsilon(T)}\sum_{t\in\STr_n}\sum_{(l,r)\in S^\epsilon(t)}F(L,R,l,r),$$
where the two internal sums yield one term for each pair $(l,r)$ in the set $\cup_{k=0}^{n-1}\CT^4_k\times\CT^4_{n-1-k}$, of which the collection of sets $\{S^\epsilon(t)\}_{t\in\STr_n}$ (where $\epsilon$ is fixed) forms a partition. The above sum is therefore equal to
$$\frac{1}{2n+2}\sum_{\epsilon=\pm1}\sum_{(L,R)\in S^\epsilon(T)}\sum_{k=0}^{n-1}\sum_{l\in\CT^4_k,r\in\CT^4_{n-1-k}}F(L,R,l,r).$$
Using the fact that the expression for $F(L,R,l,r)$ is zero unless $R=r$ or $L=l$, one can further rewrite the above as
$$\frac{1}{2n+2}\sum_{\epsilon=\pm1}\sum_{(L,R)\in S^\epsilon(T)}\left(\sum_{l\in\CT^4_{|L|-1}}F(L,R,l,R)+\sum_{r\in\CT^4_{|R|-1}}F(L,R,L,r)\right)=$$
$$=\frac{1}{2n+2}\sum_{\epsilon=\pm1}\sum_{(L,R)\in S^\epsilon(T)}h(|L|,|R|)\sum_{l\in\CT^4_{|L|-1}}g_{|l|}(L,l)+(1-h(|L|,|R|))\sum_{r\in\CT^4_{|R|-1}}g_{|r|}(R,r).$$
Finally, using the fact that $(g_n)_{n\geq 0}$ is a growth scheme for 4-ary trees yields
$$\frac{1}{2n+2}\sum_{\epsilon=\pm1}\sum_{(L,R)\in S^\epsilon(T)}h(|L|,|R|)+(1-h(|L|,|R|))=\frac{1}{2n+2}\sum_{\epsilon=\pm1}\sum_{(L,R)\in S^\epsilon(T)}1=\frac{2n+2}{2n+2}=1,$$
where we have used the fact that $|S^1(T)\cup S^{-1}(T)|=2(n+1)$.

Note that, by combining it with the obvious fact that $f_n(T,t)>0$, the computation above also proves that the image of $f_n$ is contained in $[0,1]$.

Finally, consider the sum $\sum_{T\in\STr_{n+1}}f_n(T,t)$ for a fixed $t\in\STr_{n}$. This time, we are computing
$$\frac{1}{2n+2}\sum_{\epsilon=\pm1}\sum_{(l,r)\in S^\epsilon(t)}\sum_{T\in\STr_{n+1}}\sum_{(L,R)\in S^\epsilon(T)}F(L,R,l,r)=\frac{1}{2n+2}\sum_{\epsilon=\pm1}\sum_{(l,r)\in S^\epsilon(t)}\sum_{k=0}^{n}\sum_{L\in\CT^4_k,R\in\CT^4_{n-k}}F(L,R,l,r)=$$
$$=\frac{1}{2n+2}\sum_{\epsilon=\pm1}\sum_{(l,r)\in S^\epsilon(t)}\sum_{L\in\CT^4_{|l|+1}}h(|l|+1,|r|)g_{|l|}(L,l)+\sum_{R\in\CT^4_{|r|+1}}(1-h(|l|,|r|+1))g_{|r|}(R,r),$$
where we have performed the same manipulations as for the previous sum. We can now use the fact that $(g_n)_{n\geq 0}$ is a growth scheme for 4-ary trees to compute the value of $\sum_{L\in\CT^4_{|l|+1}}g_{|l|}(L,l)$ and $\sum_{R\in\CT^4_{|r|+1}}g_{|r|}(R,r)$, which gives 
$$\frac{1}{2n+2}\sum_{\epsilon=\pm1}\sum_{(l,r)\in S^\epsilon(t)}\left(h(|l|+1,|r|)\frac{|\CT^4_{|l|+1}|}{|\CT^4_{|l|}|}+(1-h(|l|,|r|+1))\frac{|\CT^4_{|r|+1}|}{|\CT^4_{|r|}|}\right)=$$
$$=\frac{1}{2n+2}\sum_{\epsilon=\pm1}\sum_{(l,r)\in S^\epsilon(t)}\frac{(|l|+|r|+2)|\STr_{|l|+|r|+2}|}{(|l|+|r|+1)|\STr_{|l|+|r|+1}|}=\frac{2n}{2n+2}\cdot\frac{(n+1)|\STr_{n+1}|}{n|\STr_n|}=\frac{|\STr_{n+1}|}{|\STr_n|},$$
as wanted, thanks to the properties of $h$ expressed in Lemma~\ref{lemma:h function}.
\end{proof}

\bibliographystyle{siam}

\end{document}